\documentclass[11pt]{article}

\usepackage{amsfonts,amssymb,amsmath,latexsym,xcolor,epsfig}
\usepackage[textsize=scriptsize,colorinlistoftodos]{todonotes}
\usepackage{mathrsfs}
\newcommand{\irreg}{\operatorname{irreg}}
\newcommand{\diag}{\operatorname{diag}}
\newcommand{\tr}{\operatorname{tr}}
\usepackage{hyperref}
\usepackage{amsthm}
\usepackage{enumerate}
\setlength{\textheight}{22.5cm} \setlength{\textwidth}{6.7in}
\setlength{\topmargin}{0pt} \setlength{\evensidemargin}{1pt}
\setlength{\oddsidemargin}{1pt} \setlength{\headsep}{10pt}
\setlength{\parskip}{1mm} \setlength{\parindent}{3mm}


\newtheorem{theorem}{Theorem}[section]

\newtheorem{lemma}[theorem]{Lemma}
\newtheorem{corollary}[theorem]{Corollary}

\theoremstyle{definition}

\theoremstyle{remark}

\input{epsf}

\makeatletter
\def\Ddots{\mathinner{\mkern1mu\raise\p@
\vbox{\kern7\p@\hbox{.}}\mkern2mu
\raise4\p@\hbox{.}\mkern2mu\raise7\p@\hbox{.}\mkern1mu}}
\makeatother

\title{\vspace{-0.7cm}A tight lower bound for Szemer\'edi's regularity lemma}
\author{
Jacob Fox\thanks{
    Department of Mathematics,
    Massachusetts Institute of Technology,
    Cambridge, MA 02139-4307.
    Email: {\tt fox@math.mit.edu}.
    Research supported by a Packard Fellowship, by a Simons Fellowship, by NSF grant DMS-1069197, by an Alfred P. Sloan Fellowship, and by an MIT NEC Corporation Award.}  \and L\'aszl\'o Mikl\'os Lov\'asz\thanks{Department of Mathematics,
    Massachusetts Institute of Technology,
    Cambridge, MA 02139-4307.
    Email: {\tt lmlovasz@math.mit.edu}.}
}
\date{}
\begin{document}
\maketitle

\begin{abstract} 
Addressing a question of Gowers, we determine the order of the tower height for the partition size in a version of Szemer\'edi's regularity lemma. 
\end{abstract}

\section{Introduction}

Szemer\'edi's regularity lemma \cite{Sz76} is one of the most powerful tools in graph theory. An early version was used by Szemer\'edi  \cite{Sz75} in his proof of the celebrated Erd\H{o}s-Tur\'an conjecture (now known as Szemer\'edi's theorem) on long arithmetic progressions in dense subsets of the integers. The regularity lemma (see the surveys \cite{KS}, \cite{RS}) has since become a central tool in extremal combinatorics, with many applications in number theory, graph theory, and theoretical computer science. Roughly speaking, the lemma says that the vertex set of any graph may be partitioned into a small number of parts such that the bipartite subgraph between almost every pair of parts behaves in a random-like fashion.

We next describe more precisely a version of Szemer\'edi's regularity lemma. This version was first formulated by Lov\'asz and Szegedy \cite{LS07}, and can easily be shown to be equivalent to Szemer\'edi's original version. For a pair of vertex subsets $X$ and $Y$ of a graph $G$, let $e(X,Y)$ be the number of ordered pairs of vertices $(x,y) \in X \times Y$ that have an edge between them in the graph. Let $d(X,Y)=\frac{e(X,Y)}{|X||Y|}$ be the edge density between $X$ and $Y$.
The \emph{irregularity} of the pair $X,Y$ is defined to be 
\[\irreg(X,Y)=\max_{U \subset X,W \subset Y}\bigl |e(U,W)-|U||W|d(X,Y)\bigl |.\]
This is a value between $0$ and $|X||Y|$. If this is a small fraction of $|X||Y|$, then the edge distribution between $X$ and $Y$ is quite uniform, or random-like. The \emph{irregularity} of a partition $\mathscr{P}$ of the vertex set of $G$ is defined to be
\[\irreg(\mathscr{P})=\sum_{X,Y \in \mathscr{P}}\irreg(X,Y).
\]

Szemer\'edi's regularity lemma, as stated in \cite{LS07}, is as follows. 

\begin{theorem}\label{regver}

For any $\epsilon>0$, there is a (least) $M(\epsilon)$ such that any graph $G=(V,E)$ has a vertex partition into at most $M(\epsilon)$ parts with irregularity at most $\epsilon|V|^2$.

\end{theorem}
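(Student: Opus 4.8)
The plan is to run the energy-increment argument. For a vertex partition $\mathscr{P}$ of $G=(V,E)$ define the \emph{energy}
\[
q(\mathscr{P}) = \frac{1}{|V|^2}\sum_{X,Y\in\mathscr{P}} |X|\,|Y|\, d(X,Y)^2 \;\in\; [0,1].
\]
First I would record that $q$ is monotone under refinement: if $\mathscr{Q}$ refines $\mathscr{P}$ then $q(\mathscr{Q})\ge q(\mathscr{P})$. Indeed, for fixed $X,Y\in\mathscr{P}$ the density $d(X,Y)$ is the weighted average of the densities $d(X',Y')$ over the blocks $X'\subseteq X$, $Y'\subseteq Y$ of $\mathscr{Q}$, with weights $|X'||Y'|/(|X||Y|)$, so by convexity of $t\mapsto t^2$ the contribution of the block $X\times Y$ to $|V|^2 q$ never decreases; summing over all blocks gives the claim. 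The same convexity computation, keeping only one term, gives a quantitative defect version: for any $U\subseteq X$, $W\subseteq Y$, splitting $X$ into $U,X\setminus U$ and $Y$ into $W,Y\setminus W$ raises the contribution of $X\times Y$ to $|V|^2 q$ by at least $(e(U,W)-|U||W|d(X,Y))^2/(|U||W|)\ge (e(U,W)-|U||W|d(X,Y))^2/(|X||Y|)$.

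The key step is: if $\irreg(\mathscr{P})>\epsilon|V|^2$, then one refinement increases $q$ by more than $\epsilon^2$. For each pair $X,Y\in\mathscr{P}$ fix a witness $U_{XY}\subseteq X$, $W_{XY}\subseteq Y$ attaining the maximum in $\irreg(X,Y)$, and let $\mathscr{Q}$ refine each part $Z\in\mathscr{P}$ by all the witness sets contained in $Z$; if $|\mathscr{P}|=k$ this splits each part into at most $4^{k}$ pieces. Restricted to any block $X\times Y$, the partition $\mathscr{Q}$ refines the split of $X$ into $\{U_{XY},X\setminus U_{XY}\}$ and of $Y$ into $\{W_{XY},Y\setminus W_{XY}\}$, so by the previous paragraph and block-wise refinement-monotonicity the contribution of $X\times Y$ to $|V|^2 q(\mathscr{Q})$ exceeds that to $|V|^2 q(\mathscr{P})$ by at least $\irreg(X,Y)^2/(|X||Y|)$. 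Summing over all blocks and applying Cauchy--Schwarz,
\[
q(\mathscr{Q}) - q(\mathscr{P}) \;\ge\; \frac{1}{|V|^2}\sum_{X,Y\in\mathscr{P}}\frac{\irreg(X,Y)^2}{|X|\,|Y|} \;\ge\; \frac{1}{|V|^4}\Bigl(\sum_{X,Y\in\mathscr{P}}\irreg(X,Y)\Bigr)^2 \;=\; \frac{\irreg(\mathscr{P})^2}{|V|^4} \;>\; \epsilon^2,
\]
using $\sum_{X,Y}|X||Y| = |V|^2$ in the middle inequality.

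Finally I would iterate, starting from the trivial partition $\{V\}$, for which $q=d(V,V)^2\ge 0$. As long as the current partition has irregularity exceeding $\epsilon|V|^2$, replace it by the refinement $\mathscr{Q}$ above, gaining more than $\epsilon^2$ in energy. Since $q\le 1$ throughout, this happens fewer than $\epsilon^{-2}$ times, so after at most $\lceil \epsilon^{-2}\rceil$ steps we reach a partition with irregularity at most $\epsilon|V|^2$. The number of parts starts at $1$ and is replaced by at most $k\,4^{k}$ at each step, so after this bounded number of steps it is bounded by a tower-type function of $\epsilon$ alone; taking $M(\epsilon)$ to be that bound, and then the least valid bound, proves the theorem.

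I expect the only genuinely delicate point to be the bookkeeping in the simultaneous refinement: one must verify that refining all parts at once by all witness sets really collects the per-pair energy gains additively, which works because $|V|^2 q$ decomposes as a sum of independent block contributions each of which is refinement-monotone, and that the resulting bound on the number of parts depends on $\epsilon$ only, not on $|V|$ or on $G$.
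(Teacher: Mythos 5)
Your proposal is correct and follows essentially the same route as the paper: an energy-increment argument with the mean square density, where each part is refined by the witness sets of all its irregular pairs and Cauchy--Schwarz converts the per-pair gains into a gain of at least $\irreg(\mathscr{P})^2/|V|^4$ per iteration. The only differences are presentational (you use the convexity/variance decomposition directly where the paper phrases it via conditional expectations) and in constants (the paper saves a factor of $4$ in the increment and another by noting $q\in[d^2,d]$, and splits each part into only $2^{k+1}$ rather than $4^k$ pieces), none of which affects the validity of the argument.
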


We present for completeness the standard proof of the regularity lemma in Section \ref{upper}, using a density increment argument with the mean square density. It shows that $M(\epsilon)$ is at most a tower of twos of height $O(\epsilon^{-2})$. By a careful argument, we obtain a tower height which is at most $2+\epsilon^{-2}/16$. Formally, the \emph{tower function $T(n)$ of height $n$} is defined recursively by $T(1)=2$ and $T(n)=2^{T(n-1)}$. 

A vertex partition of a graph is \emph{equitable} if any two parts differ in size by at most one.  In the statement of the regularity lemma, it is often added that the vertex partition is equitable.  There are several good reasons not to add this requirement to the regularity lemma. First, our main result, which gives a lower bound on $M(\epsilon)$ whose height is on the same order as the upper bound, does not need this requirement. Second, the proof of the upper bound is cleaner without it. Finally, it is further shown in \cite{FGLZ} that whether or not an equitable partition is required has a negligible effect on $M(\epsilon)$.


The original version of the regularity lemma is as follows. A pair of vertex subsets $X,Y$ is {\it $\epsilon$-regular} if for all $U \subset X$ and $W \subset Y$ with $|U| \ge \epsilon |X|$ and $|W| \ge \epsilon |Y|$, we have $|d(U,W)-d(X,Y)| \le \epsilon$. It is not difficult to show that if a pair $X,Y$ is $\epsilon$-regular, then its irregularity is at most $\epsilon|X||Y|$. Conversely, if the irregularity is at most $\epsilon^3|X||Y|$, then the pair is $\epsilon$-regular. A vertex partition with $k$ parts is {\it $\epsilon$-regular} if all but at most $\epsilon k^2$ pairs of parts are $\epsilon$-regular. The regularity lemma states that for every $\epsilon>0$, there is a $K(\epsilon)$ such that there is an equitable $\epsilon$-regular partition into at most $K(\epsilon)$ parts. Again, it is not too difficult to see that if the irregularity of an equitable partition is at most $\epsilon^4 |V|^2$, then it is $\epsilon$-regular. Conversely,  if an equitable partition is $\epsilon$-regular, it has irregularity at most $2\epsilon|V|^2$. 

For many applications, it would be helpful to have a smaller bound on the number of parts in the regularity lemma. In other words, can the bound on $M(\epsilon)$ be significantly improved? A breakthrough result of Gowers \cite{Go97} gave a negative answer to this problem, showing that $K(\epsilon)$ is at least a tower of twos of height on the order of $\epsilon^{-1/16}$, and hence $M(\epsilon)$ is at least a tower of height on the order of $\epsilon^{-1/64}$.  Bollob\'as referred to this important work as a `tour de force' in  Gowers' Fields Medal citation \cite{Bo98}. Gowers \cite{Go97} further raises the problem of determining the correct order of the tower height in Szemer\'edi's regularity lemma. 

More recently, Conlon and Fox \cite{CF12} estimated the number of irregular pairs in the regularity lemma, and  Moshkovitz and Shapira \cite{MS} gave a simpler proof of a tower-type lower bound. However, these results still left a substantial gap in the order of the tower height. 

The main result in this paper is a tight lower bound on the tower height in the regularity lemma. It shows that $M(\epsilon)$ in Theorem \ref{regver}, the regularity lemma, can be bounded from below and above by a tower of twos of height on the order of $\epsilon^{-2}$. Our lower bound construction shows that the density increment argument using the mean square density in the proof of the regularity lemma cannot be improved. 


\begin{theorem}\label{main}
For $\epsilon < 1/4$, there is a constant $c>0$ such that the bound  $M(\epsilon)$ on the number of parts in the regularity lemma is at least a tower of twos of height at least $c\epsilon^{-2}$. 
\end{theorem}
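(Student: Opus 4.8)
The plan is to build, for each target tower height $n \sim c\epsilon^{-2}$, a single graph $G_n$ (or a weighted graph / graphon, which can then be discretized) on which every vertex partition with irregularity at most $\epsilon|V|^2$ is forced to have at least $T(n)$ parts. The engine is the mean square density potential $q(\mathscr{P}) = \sum_{X,Y\in\mathscr{P}} d(X,Y)^2 |X||Y|/|V|^2$, which is monotone under refinement and bounded in $[0,1]$; the standard upper bound proof shows that a single step of irregular-pair-splitting raises $q$ by a definite amount. To get a matching lower bound I want the reverse: a construction where any non-$\epsilon$-regular-enough partition admits a refinement that strictly increases $q$, by an amount that lets the potential survive $\Omega(\epsilon^{-2})$ doublings of the partition size before it is exhausted. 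Concretely, I would define $G_n$ recursively: start from a simple ``unbalanced bipartite-like'' gadget that has high irregularity unless split into two unequal pieces whose densities differ, and then substitute a scaled copy of $G_{n-1}$ into a carefully chosen block, so that resolving the coarse irregularity of $G_n$ forces the partition to be fine enough to ``see'' $G_{n-1}$, which in turn forces it to be finer still, etc. The recursion should be arranged so that the irregularity budget $\epsilon|V|^2$ at level $n$ corresponds to a per-level budget that forces the number of parts to (at least) double — in fact to square or exponentiate — with each level, yielding a tower.

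The key steps, in order, are: (1) isolate the right one-level gadget — a graph or graphon $H$ with a distinguished parameter $\delta$ such that any partition of $V(H)$ with irregularity below $\delta|V(H)|^2$ must refine a prescribed nontrivial partition $\{A_0, A_1\}$ with $d(A_0,\cdot)\neq d(A_1,\cdot)$, i.e.\ must ``cut along'' a hidden structure; the natural choice mimics Gowers' construction, where the hidden structure at each scale is a bipartition into parts of very different sizes so that a near-regular partition is forced to have many small parts. (2) Set up the recursive substitution: replace each vertex (or each block) of the level-$(n-1)$ construction by a small scaled copy of the level-$1$ gadget, tracking how densities, the measure $|X||Y|/|V|^2$, and irregularity rescale under blow-up and under the Szemer\'edi-style ``within-part'' vs.\ ``between-part'' decomposition of a partition. (3) Prove the core counting inequality: if $\mathscr{P}$ has irregularity at most $\epsilon_n|V(G_n)|^2$, then on a $(1-o(1))$-fraction of the mass, $\mathscr{P}$ restricted to the substituted copies is (after rescaling) a valid low-irregularity partition of $G_{n-1}$; hence by induction those restrictions each have $\ge T(n-1)$ parts, and since the copies are (essentially) disjoint, $\mathscr{P}$ has $\ge 2^{T(n-1)} = T(n)$ parts — or more carefully, a product/exponential blow-up in the count. (4) Bookkeep the constants: show the sequence $\epsilon_n$ can be taken as large as $\epsilon_1/(Cn)$ or so (this is where the $\epsilon^{-2}$, rather than a worse power, comes from — one must lose only a constant factor in the irregularity budget per level, and the number of levels one can afford with budget $\epsilon$ is then $\Omega(\epsilon^{-1})$... so in fact the $\epsilon^{-2}$ will require that the per-level loss be only $O(\sqrt{\epsilon_n})$-type, matching the $\irreg \le \epsilon^3$ vs.\ $\epsilon$-regular conversions noted in the excerpt, and I would track this carefully). (5) Finally, discretize: replace the graphon/weighted construction by an actual graph on finitely many vertices, checking that rounding densities to $\{0,1\}$-edges perturbs all irregularities by a lower-order amount, so the bound on the number of parts is preserved.

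The main obstacle I anticipate is step (3) together with the fine constant bookkeeping in step (4): making precise the claim that a globally low-irregularity partition of $G_n$ \emph{restricts} to a low-irregularity partition of each embedded copy of $G_{n-1}$. The difficulty is that a part $X\in\mathscr{P}$ need not respect the copies at all — it can spread tiny slivers across exponentially many copies — so ``restriction to a copy'' is genuinely a new partition of that copy, and one must show that its irregularity, summed over copies, is controlled by the global irregularity. This is where the construction has to be rigid enough: the between-copy densities must be arranged so that any part straddling many copies already contributes a large irregularity (forcing parts to be essentially copy-confined), while simultaneously the within-copy structure carries the inductive hypothesis. Getting these two demands to coexist, and doing so while losing only a constant factor in $\epsilon$ per level (so that $n\sim c\epsilon^{-2}$ rather than $n \sim c\epsilon^{-1}$ or worse), is the crux — and matching the mean-square-density increment of the upper bound exactly, as the theorem's ``tight'' claims, is precisely the point where the quantitative analysis must be sharp rather than merely polynomial.
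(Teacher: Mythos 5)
Your overall frame (a multiscale construction with nested exponentially-refining partitions, plus a discretization step at the end) is the right one and matches the Gowers-style framework the paper also uses, but two ingredients that are essential for getting height $\Theta(\epsilon^{-2})$ rather than a small power of $\epsilon^{-1}$ are missing, and the obstacle you flag in step (3) is left unresolved. First, your one-level gadget is wrong for this purpose: a ``bipartition into parts of very different sizes'' is exactly Gowers' gadget, which is what caps his bound at $\epsilon^{-1/16}$. The paper instead uses \emph{equitable} random bipartitions and adds a density increment of $\pm\alpha$ with $\alpha=\Theta(\epsilon)$ at every level, the same $\alpha$ at all levels; the number of usable levels is then $\Theta(\alpha^{-2})$ because the densities perform a random walk with step $\alpha$ and must stay inside $[0,1]$ (this is the ``inactive pairs'' constraint), which is precisely the mirror image of the mean-square-density increment in the upper bound. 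Your bookkeeping in step (4), with a per-level budget $\epsilon_n\sim\epsilon_1/(Cn)$, would give at best $\Theta(\epsilon^{-1})$ levels, and your parenthetical attempt to repair this via an $O(\sqrt{\epsilon_n})$ per-level loss does not correspond to any mechanism in the construction.

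Second, and more seriously, the crux you correctly identify --- that a part of $\mathscr{P}$ can spread slivers across many copies/levels, so no single level is forced to exhibit $\Omega(\epsilon|V|^2)$ irregularity --- is the actual main difficulty, and your proposal contains no device for overcoming it. The paper's resolution is to track, for each part $S$ of the candidate partition, the majority level-$i$ cell containing $S$ (a blue/red coloring of vertices), and to sum the irregularity contributed by the mass of $S$ that splits off at \emph{each} level $i$, showing via a singular-value/expander-mixing estimate that the contributions of levels deeper than $i$ to the level-$i$ pairs are negligible, so the per-level contributions genuinely add up to $\Omega(\alpha|V||W|)$ unless almost all parts refine the final partition $\mathscr{P}_s$. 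Without an argument of this kind (or an alternative that aggregates irregularity across levels), the inductive step ``a low-irregularity partition of $G_n$ restricts to low-irregularity partitions of the copies of $G_{n-1}$'' cannot be closed, and the recursion does not get off the ground.
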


The assumption $\epsilon<1/4$ is needed because it is easy to check that $M(\epsilon)=1$ for $\epsilon \geq  1/4$ and $M(\epsilon) \geq 2$ otherwise. The proof shows that we may take $c=10^{-26}$, although we do not try to optimize this constant in order to give a clearer presentation.  

All of the proofs of tower-type lower bounds on the regularity lemma, including that of Theorem \ref{main}, build on the basic framework developed by Gowers in \cite{Go97}. It involves a probabilistic construction of a graph which requires many parts in any partition with small irregularity. A simple argument shows that it is sufficient for the construction to be an edge-weighted graph, with edge weights in $[0,1]$. The construction begins with a sequence of equitable vertex partitions $\mathscr{P}_0,\mathscr{P}_1,\ldots,\mathscr{P}_s$ with $s=\Theta(\epsilon^{-2})$ and $\mathscr{P}_{i+1}$ a refinement of $\mathscr{P}_i$ with exponentionally more parts than $\mathscr{P}_i$. The weight of an edge will depend on which parts its vertices lie in. The proof further shows that {\it any} partition of the constructed graph with irregularity at most $\epsilon$ cannot be too far from being a refinement of $\mathscr{P}_s$, which has many parts. The theorem quickly follows from this result. 

There are two novelties in the proof of Theorem \ref{main}. While the previous constructions could not give a tight bound, the lower bound construction here is carefully chosen to mimic the upper bound. However, the main novelty is in how we analyze the construction. To get a good lower bound on the irregularity of a partition, it is helpful to see how each part $S$ breaks into smaller pieces based on each partition $\mathscr{P}_i$. A careful bookkeeping allows us to lower bound the irregularity by collecting how much $S$ splits at each step $i$. Previous arguments could not collect the contributions to the irregularity from a part splitting off in pieces over different steps. 

To accomplish this, we need careful averaging arguments over many parts to obtain the desired bound. This type of averaging argument is in fact necessary as we cannot guarantee irregularity from a particular part $S$. Indeed, it is not too difficult to show that for each $\epsilon>0$ there is a $k=k(\epsilon)$ which is only exponential in $\epsilon^{-O(1)}$ such that every graph has an equitable vertex partition into parts $V_1,\ldots,V_k$ such that $V_1$ is $\epsilon$-regular with every other part $V_j$. This justifies the need for the more global averaging arguments we use to bound the irregularity. 
 
We remark that Theorem \ref{regver} is often more convenient to work with than the original version of the regularity lemma. There are several reasons for this. One reason is that in various applications, the standard notion of regularity requires considering extra case analysis depending on whether or not certain subsets are at least an $\epsilon$-fraction of a part. This is so that one can apply the density conclusion in the definition of an $\epsilon$-regular pair. A good example is the proof of the counting lemma, an important tool in combination with the regularity lemma. See \cite{BCLSV} for a nice proof of the counting lemma using the notion of irregularity. Another reason is that the notion of irregularity is closely related to the cut norm developed in the proof of the Frieze-Kannan weak regularity lemma, which begets the close relationship between these two important result; see \cite{BCLSV,CF12,LS07,RS,Tao06} for details.

\noindent {\bf Organization.} In the next section, we present a proof of Szemer\'edi's regularity lemma, Theorem \ref{regver}. In Section \ref{construction}, we present our construction for the lower bound. In Section \ref{sing}, we review some properties of weighted graphs that follow from their singular values, including a bipartite analogue of the expander mixing lemma. In Section \ref{graphchangesec}, we use these results to show where most of the irregularity of the construction comes from. In Section \ref{secmain}, we use this to show that our construction requires a large number of parts for any partition of small irregularity. In the last section, we make some concluding remarks. For the sake of clarity of presentation, we do not make any serious attempt to optimize absolute constants in our statements and proofs.

\section{Regularity Lemma Upper Bound} \label{upper}

Here, for completeness, we present the proof of Szemer\'edi's regularity lemma, showing that the bound $M(\epsilon)$ on the number of parts is at most a tower of twos of height at most $2+ \epsilon^{-2}/16$. The proof is similar to Szemer\'edi's original proof \cite{Sz76}, although it uses a probabilistic approach as in \cite{ASbook}.

The key idea that makes the proof work is to use a density increment argument with the \emph{mean square density}. Let $G=(V,E)$ be a graph and let $\mathscr{P}$ be a vertex partition into parts $V_1,V_2,\ldots,V_k$. The mean square density of the partition $\mathscr{P}$ is defined to be 
\[q(\mathscr{P}):=\sum_{i,j=1}^k \frac{|V_i||V_j|}{|V|^2} d(V_i,V_j)^2.\]

Let $x$ and $y$ be two vertices of $G$, chosen independently and uniformly at random. Let $Z$ be the random variable which is the density  $d(V_i,V_j)$ between the pair of parts for which $x \in V_i$ and $y \in V_j$. Note that the mean square density $q(\mathscr{P})$ is exactly $\mathbb{E}[Z^2]$. Now, suppose that we have a refinement $\mathscr{P}'$ of $\mathscr{P}$. Let $Z'$ be the corresponding random variable for $\mathscr{P}'$. It is not difficult to see that for any fixed $i,j$, we have $\mathbb{E}(Z' | x \in V_i,y \in V_j)=d(V_i,V_j)$, which is equal to $Z$ conditioned on $x \in V_i$ and $y \in V_j$. Thus, 
\begin{align} \label{dif} \mathbb{E}(Z'^2-Z^2|x \in V_i,y \in V_j) & = \mathbb{E}((Z'-Z)^2+2Z(Z'-Z)|x \in V_i,y \in V_j) \nonumber \\ & = \mathbb{E}((Z'-Z)^2| x \in V_i, y \in V_j)+\mathbb{E}(2Z(Z'-Z)|x \in V_i,y \in V_j) \nonumber \\ & = \mathbb{E}((Z'-Z)^2| x \in V_i, y \in V_j).\end{align}


This is clearly non-negative for any pair of parts $V_i,V_j$. This implies that the mean square density cannot decrease from taking a refinement. However, the next lemma shows that if we take the refinement $\mathscr{P}'$ carefully, we can find a lower bound on the difference in terms of the irregularity of $\mathscr{P}$.

\begin{lemma}\label{regprooflem}
Suppose we have a partition $\mathscr{P}$ into $k$ parts, with irregularity $z|V|^2$. Then, there is a refinement $\mathscr{P}'$ of $\mathscr{P}$ with at most $k2^{k+1}$ parts and $q(\mathscr{P}') \geq q(\mathscr{P})+4z^2$.
\end{lemma}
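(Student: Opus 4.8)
The plan is to take the refinement $\mathscr{P}'$ that, for each pair of parts $V_i, V_j$ realizing a chunk of the irregularity, subdivides $V_i$ and $V_j$ according to the optimal witnessing sets. Concretely, for each ordered pair $(i,j)$ let $U_{ij} \subset V_i$ and $W_{ij} \subset V_j$ be sets with $\bigl| e(U_{ij},W_{ij}) - |U_{ij}||W_{ij}| d(V_i,V_j) \bigr| = \irreg(V_i,V_j)$. Then let $\mathscr{P}'$ be the common refinement of $\mathscr{P}$ together with, for each $i$, all the sets $U_{ij}$ and $W_{ji}$ for $j = 1, \ldots, k$. Each part $V_i$ gets cut by at most $2k$ subsets, hence into at most $2^{2k}$ pieces, so $|\mathscr{P}'| \le k 2^{2k}$; one should check the slightly more careful bookkeeping that actually gives $k 2^{k+1}$ (presumably one only needs to refine within each $V_i$ by the $U_{ij}$'s for the pairs where the irregularity "points one way", or one pairs up the two witness sets of each pair cleverly — I would look for the argument that only $k$ subsets per part are genuinely needed, e.g. because $W_{ij}$ can be recovered from a single bipartition once we also keep $V_j$).

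**The density increment.**

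By \eqref{dif}, summing over all pairs,
\[
q(\mathscr{P}') - q(\mathscr{P}) = \sum_{i,j} \frac{|V_i||V_j|}{|V|^2}\, \mathbb{E}\bigl[(Z'-Z)^2 \,\big|\, x \in V_i, y \in V_j\bigr].
\]
For a fixed pair $(i,j)$, I want to lower bound $\mathbb{E}[(Z'-Z)^2 \mid x\in V_i, y\in V_j]$ in terms of $\irreg(V_i,V_j)$. Since $\mathscr{P}'$ separates $U_{ij}$ from $V_i \setminus U_{ij}$ and $W_{ij}$ from $V_j \setminus W_{ij}$, the conditional distribution of $(x,y)$ given $x \in V_i, y \in V_j$ breaks into (at least) the four cells $U_{ij}\times W_{ij}$, etc. A standard variance/convexity computation: if a random variable $Z'$ has mean $d := d(V_i,V_j)$ and we know that on a sub-block $A \times B$ of relative measure $\alpha\beta$ (with $\alpha = |U_{ij}|/|V_i|$, $\beta = |W_{ij}|/|V_j|$) its conditional mean is $d + \delta$ where $\alpha\beta|\delta| \cdot |V_i||V_j| = \irreg(V_i,V_j)$, then $\mathbb{E}[(Z'-d)^2] \ge \alpha\beta \delta^2$. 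Writing $\irreg(V_i,V_j) = r_{ij}|V_i||V_j|$, so $\alpha\beta|\delta| = r_{ij}$ and hence $\delta^2 \ge r_{ij}^2/(\alpha\beta)$, we get $\mathbb{E}[(Z'-d)^2] \ge \alpha\beta \cdot r_{ij}^2/(\alpha\beta) = r_{ij}^2$. Wait — that is too lossy if I only use the single block; but actually it is exactly what is needed, and in fact using the complementary blocks improves the constant: a cleaner route is to note $\mathbb{E}[(Z'-Z)^2 \mid \cdots] \ge \alpha\beta(1-\alpha\beta)\delta^2 \cdot(\text{something})$, or more simply to use that the deviation $|e(U,W) - |U||W|d|$ forces, by Cauchy–Schwarz across the two-by-two block structure, the bound $\mathbb{E}[(Z'-Z)^2\mid\cdots] \ge 4 r_{ij}^2$. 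So $q(\mathscr{P}') - q(\mathscr{P}) \ge \sum_{i,j} \frac{|V_i||V_j|}{|V|^2}\, 4 r_{ij}^2$.

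**Finishing via Cauchy–Schwarz.**

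It remains to pass from $\sum_{i,j} \frac{|V_i||V_j|}{|V|^2} r_{ij}^2$ to $z^2$, where $z|V|^2 = \irreg(\mathscr{P}) = \sum_{i,j} r_{ij}|V_i||V_j|$, i.e. $z = \sum_{i,j}\frac{|V_i||V_j|}{|V|^2} r_{ij}$. This is immediate from Jensen/Cauchy–Schwarz applied to the probability measure that assigns weight $\frac{|V_i||V_j|}{|V|^2}$ to the pair $(i,j)$: $\mathbb{E}[r^2] \ge (\mathbb{E}[r])^2 = z^2$. Combining, $q(\mathscr{P}') - q(\mathscr{P}) \ge 4z^2$, as claimed, and the part count is under control.

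**Main obstacle.**

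The routine estimates (the variance inequality per pair, the final Cauchy–Schwarz) are easy; I expect the only genuinely fiddly point to be getting the part-count bound $k 2^{k+1}$ rather than the crude $k 2^{2k}$, which requires being economical about how many witness subsets one actually refines each part by — and making sure the factor $4$ (not just $1$) survives in the per-pair variance bound, since that $4$ is exactly what makes the lemma match the $\epsilon^{-2}/16$ tower height quoted in the text.
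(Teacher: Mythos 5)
Your plan is the paper's plan (refine each pair by its witness sets, use the conditional-variance identity \eqref{dif} per pair, finish with Cauchy--Schwarz over the pairs), and the final Cauchy--Schwarz step is fine. But the two points you yourself flag as ``fiddly'' are exactly where your write-up falls short of the statement, and both need to be closed. First, the factor $4$: your single-block computation only gives $\mathbb{E}[(Z'-Z)^2\mid x\in V_i,y\in V_j]\ge r_{ij}^2$ (and note the intermediate line should read $\delta^2=r_{ij}^2/(\alpha\beta)^2$, not $\delta^2\ge r_{ij}^2/(\alpha\beta)$), which would only yield $q(\mathscr{P}')\ge q(\mathscr{P})+z^2$. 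The mechanism you gesture at is the right one, but it must be carried out: the random variable $T=Z'-Z$ conditioned on $x\in V_i,\,y\in V_j$ has mean \emph{zero} by \eqref{dif}, and it equals $a=r_{ij}/(\alpha\beta)$ on the block $U_{ij}\times W_{ij}$ of conditional probability $p=\alpha\beta$; the elementary fact that a mean-zero $T$ with $P(T=a)=p$ satisfies $\mathbb{E}[T^2]\ge \frac{p}{1-p}a^2$ (Cauchy--Schwarz on the complementary event) then gives $\mathbb{E}[T^2]\ge \frac{r_{ij}^2}{p(1-p)}\ge 4r_{ij}^2$ since $p(1-p)\le 1/4$. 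Without invoking the mean-zero property you cannot harvest the complementary block, and the $4$ is lost.

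Second, the part count: you leave it at $k2^{2k}$ and speculate about the fix. The fix is simply that $\irreg(X,Y)=\irreg(Y,X)$ (both $e(\cdot,\cdot)$ and $d(\cdot,\cdot)$ are symmetric in this paper's ordered-pair convention), so the witness pair for $(V_j,V_i)$ can be taken to be the same two sets as for $(V_i,V_j)$; hence each $V_i$ is cut by one subset for each $j\ne i$ and by two subsets for the diagonal pair $j=i$, i.e.\ by $k+1$ subsets in total, giving at most $2^{k+1}$ pieces per part and $k2^{k+1}$ parts overall. Both repairs are short, but as written the proposal proves a weaker increment ($z^2$) and a weaker part bound ($k2^{2k}$) than the lemma asserts.
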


\begin{proof} Let  $V_1,V_2,\ldots,V_k$ be the parts of $\mathscr{P}$. 
Fix a pair of parts $V_i$, $V_j$. First, assume $i \ne j$ (this just makes the argument a bit simpler). Let the irregularity of $V_i$ and $V_j$ be $z_{ij}|V_i||V_j|$, given by subsets $W_i \subset V_i$, $W_j \subset V_j$. Hence, we have 
\[z_{ij}=\frac{1}{|V_i||V_j|}\big|e(W_i,W_j)-|W_i||W_j|d(V_i,V_j) \big |=\frac{|W_i||W_j|}{|V_i||V_j|}\big |d(W_i,W_j)-d(V_i,V_j)\big |.\]
For simplicity of notation, we will assume that the value on the right hand side in the absolute value is positive; the exact same proof works if it is negative. For now, let $\mathscr{P}'$ be the refinement of $\mathscr{P}$ obtained by dividing $V_i$ into $W_i$ and $U_i=V_i \setminus W_i$, and $V_j$ into $W_j$ and $U_j=V_j \setminus W_j$, and keeping the rest of the parts the same. Let $Z$ and $Z'$ be defined as before, and let $\widetilde{Z}$ be the random variable equal to $Z'-Z$ conditioned on $x \in V_i,y \in V_j$. Using \eqref{dif}, we have  
\[\mathbb{E}(Z'^2-Z^2|x \in V_i,y \in V_j)=\mathbb{E}((Z'-Z)^2|x \in V_i,y \in V_j)=\mathbb{E}(\widetilde{Z}^2).\]

If $x \in V_i,y \in V_j$, then $Z=d(V_i,V_j)$. However, if $x \in W_i,y \in W_j$, then $Z'=d(W_i,W_j)$ and  $\widetilde{Z}=d(W_i,W_j)-d(V_i,V_j)=z_{ij}\frac{|V_i||V_j|}{|W_i||W_j|}$.

We use the following simple fact. If $T$ is a random variable with $\mathbb{E}(T)=0$ and $T= a \neq 0$ with probability $p<1$, then $\mathbb{E}(T^2) \ge \frac{p}{1-p}a^2$. Indeed, if we let $x=\mathbb{E}(T|T \ne a)$, then we have \[ 0=\mathbb{E}(T)=pa+(1-p)x,\] which gives \[x=\frac{-pa}{1-p},\] and so \[ \mathbb{E}(T^2)=pa^2+(1-p)\mathbb{E}(T^2|T \ne a) \ge pa^2+(1-p)x^2= pa^2+\frac{p^2a^2}{1-p} =\frac{p}{1-p}a^2,\] where the inequality is by an application of the Cauchy-Schwarz inequality.

We know that if $x \in W_i,y \in W_j$, then $\widetilde{Z}$ is equal to $z_{ij}\frac{|V_i||V_j|}{|W_i||W_j|}$. Let $w=|W_i||W_j|$ and $v=|V_i||V_j|$ Applying the above statement for $T=\widetilde{Z}$, $a=z_{ij}\frac{v}{w}$, and $p=\frac{w}{v}$, we obtain that 
\[\mathbb{E}(Z'^2-Z^2|x \in V_i,y \in V_j)=\mathbb{E}(\widetilde{Z}^2) \ge z_{ij}^2 (v^2/w^2)\frac{w/v}{1-w/v} = z_{ij}^2 \frac{1}{\frac{w}{v}(1-\frac{w}{v})}  \ge 4 z_{ij}^2. \]

Note that the above is true if we just assume that the partition is a further refinement of $\mathscr{P}'$. Indeed, if $\mathscr{P}''$ is the refinement and $Z''$ the corresponding random variable, then 
\[\mathbb{E}(Z''^2-Z^2|x \in V_i,y \in V_j)=\mathbb{E}(Z''^2-Z'^2|x \in V_i,y \in V_j)+\mathbb{E}(Z'^2-Z^2|x \in V_i,y \in V_j) \ge 0+4z_{ij}^2. \]

We assumed above that $i \ne j$. If $i=j$ the same argument works by dividing $V_i$ into four parts, and keeping the other parts the same.

Now, let us do this division for every pair of parts $V_i,V_j$, and take the common refinement. With a slight abuse of notation, call this common refinement $\mathscr{P}'$, and take $Z'$ to be the corresponding random variable. Hence, the increase in the mean square density is 
\[q(\mathscr{P}')-q(\mathscr{P})=\mathbb{E}(Z'^2)-\mathbb{E}(Z^2)=\sum_{i,j=1}^k \mathbb{E}(Z'^2-Z^2|x \in V_i,y \in V_j)\frac{|V_i||V_j|}{|V|^2} \ge \sum_{i,j=1}^k 4 z_{ij}^2\frac{|V_i||V_j|}{|V|^2}.\]

By the Cauchy-Schwarz inequality, we have
\[\left(z|V|^2\right)^2=\left(\sum_{i,j=1}^k z_{ij}|V_i||V_j|\right)^2 \le \left(\sum_{i,j=1}^kz_{ij}^2|V_i||V_j|\right)\left(\sum_{i,j=1}^k |V_i||V_j|\right)=\left(\sum_{i,j=1}^kz_{ij}^2|V_i||V_j|\right)|V|^2.\]

Dividing this by $|V|^4/4$, we obtain that the increase in the mean square density is at least
\[4\sum_{i,j=1}^k z_{ij}^2\frac{|V_i||V_j|}{|V|^2} \ge 4z^2.\]

The part $V_i$ is partitioned into two parts for each $j \not = i$, and into four parts if $j=i$. As there are $k$ parts in $\mathscr{P}$, each part $V_i$ is divided into at most $2^{k+1}$ parts in $\mathscr{P}'$, giving a total of at most $k2^{k+1}$ parts in $\mathscr{P}'$, and completing the proof of the lemma.
\end{proof}

\begin{proof}[Proof of Theorem \ref{regver}]

Let $d=d(V,V)$ be the edge density of the graph $G=(V,E)$. First, note that if we take the trivial partition with one part, then the mean square density is $d^2$. If we take a partition into parts of size one, then the mean square density is $d$. As taking a refinement of a partition can not decrease the mean square density, the mean square density of any vertex partition of $G$ is always between $d^2$ and $d$. Thus, the mean square density of every vertex partition lies in the interval $[d^2,d]$ of length $d-d^2=d(1-d) \le 1/4$. Now, the proof is as follows.

Let $\mathscr{P}_0$ be the trivial partition with one part. We will recursively define a sequence of refinements $\mathscr{P}_0,\mathscr{P}_1,\ldots$ as follows. If the partition $\mathscr{P}_i$ has irregularity at most $\epsilon |V|^2$, then this is the desired partition, and we are done. Otherwise, letting $k$ denote the number of parts of $\mathscr{P}_i$, by Lemma \ref{regprooflem} there is a refinement $\mathscr{P}_{i+1}$ into at most $k2^{k+1}$ parts such that $q(\mathscr{P}_{i+1}) \geq q(\mathscr{P}_i)+4\epsilon^{2}$. Since the mean square density cannot be more than $q(\mathscr{P}_0)+1/4$, this iteration can happen for at most $\lfloor \epsilon^{-2}/16 \rfloor$ steps, and we obtain a partition with irregularity at most $\epsilon|V|^2$. Thus, the number of parts is at most $k_s$ with $s=\lfloor \epsilon^{-2}/16 \rfloor$, where $k_i$ is defined recursively by $k_0=1$ and $k_{i+1}=k_i2^{k_i+1}$. At each step, we gain one exponential in the number of parts, and one can easily show by induction that $k_i \le T(i+2)/4$, where $T$ is the tower function defined in the introduction. Indeed, $k_0=1=T(2)/4$, and by induction \[k_{i+1}=k_i2^{k_i+1} \le 2^{2k_i} \le 2^{4k_i-2} \le 2^{T(i+2)-2}=\frac{T(i+3)}{4}.\] Thus, the total tower height is at most $2+\epsilon^{-2}/16$.
\end{proof}

\section{Construction} \label{construction}

We next give the construction of the graph which we use to prove Theorem \ref{main}. We will actually construct a weighted graph $G$ with edge weights in $[0,1]$. The following argument is based on a similar one in \cite{Go97}, but adapted to handle irregularity. It shows that constructing a weighted graph with the desired properties (rather than an unweighted graph) is sufficient. We first prove the following lemma. 

\begin{lemma} \label{countclose}
Suppose we have a weighted graph $G$ on $N$ vertices, with weights from the interval $[0,1]$. Let $\widetilde{G}$ be a random, unweighted graph on the same set of vertices, where each pair of vertices forms an edge with probability equal to its weight, independently of the other pairs. Then, with positive probability, $|e_{\widetilde{G}}(A,B)-e_G(A,B)| \le 4N^{3/2}$ for every pair $A,B$ of vertex subsets.
\end{lemma}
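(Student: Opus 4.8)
The plan is the standard first-moment argument: prove a concentration bound for each \emph{fixed} pair of subsets, then take a union bound over all pairs. Fix an ordered pair $(A,B)$ of vertex subsets of $G$. For each unordered pair $\{u,v\}$ of distinct vertices, let $p_{uv}\in[0,1]$ denote its weight in $G$, let $X_{uv}\in\{0,1\}$ be the indicator that $\{u,v\}$ is an edge of $\widetilde G$ (so $X_{uv}$ are independent with $\mathbb{E}X_{uv}=p_{uv}$), and let $c_{uv}\in\{0,1,2\}$ be the number of the two ordered pairs $(u,v),(v,u)$ lying in $A\times B$. Since the graphs have no loops, one has the exact identity
\[
e_{\widetilde G}(A,B)-e_G(A,B)=\sum_{\{u,v\}} c_{uv}\bigl(X_{uv}-p_{uv}\bigr),
\]
a sum of independent, mean-zero random variables, the term indexed by $\{u,v\}$ taking values in an interval of length $c_{uv}\le 2$.

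First I would apply Hoeffding's inequality to this sum. Since $\sum_{\{u,v\}}c_{uv}^2\le 4\binom N2<2N^2$, it yields, for every $t>0$,
\[
\Pr\Bigl[\,\bigl|e_{\widetilde G}(A,B)-e_G(A,B)\bigr|\ge t\,\Bigr]\le 2\exp\!\left(-\frac{2t^2}{\sum_{\{u,v\}}c_{uv}^2}\right)\le 2\exp\!\left(-\frac{t^2}{N^2}\right).
\]
Taking $t=4N^{3/2}$ makes the right-hand side equal to $2\exp(-16N)$.

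Next I would take a union bound over all pairs. There are at most $2^N$ choices for $A$ and $2^N$ for $B$, hence at most $4^N$ ordered pairs $(A,B)$, so the probability that $\bigl|e_{\widetilde G}(A,B)-e_G(A,B)\bigr|\ge 4N^{3/2}$ for \emph{some} pair is at most $4^N\cdot 2\exp(-16N)=2\bigl(4e^{-16}\bigr)^N$. Since $4e^{-16}<1$, this quantity is strictly less than $1$ for every $N\ge 1$ (indeed it tends to $0$), so with positive probability the bound $\bigl|e_{\widetilde G}(A,B)-e_G(A,B)\bigr|\le 4N^{3/2}$ holds simultaneously for all pairs $A,B$, which is the assertion of the lemma.

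The only genuinely calibrated step — and the reason the exponent $3/2$ appears — is matching the concentration estimate against the union bound: the number of pairs is $4^N=e^{\Theta(N)}$, while the tail bound for deviation $t$ is $e^{-\Theta(t^2/N^2)}$, so one needs $t$ of order $N^{3/2}$ for the argument to close, and $t=4N^{3/2}$ leaves a comfortable margin. The bookkeeping around overlapping $A$ and $B$ (the coefficients $c_{uv}$, the absence of loops) is routine rather than an obstacle; if one prefers, one can first reduce to disjoint $A,B$ by splitting into $A\setminus B$, $B\setminus A$, $A\cap B$ at the cost of an absorbable constant factor in $t$.
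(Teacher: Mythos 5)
Your proposal is correct and follows essentially the same route as the paper: a per-pair concentration estimate at scale $t=4N^{3/2}$ followed by a union bound over the $4^N$ pairs $(A,B)$. The only difference is cosmetic — the paper applies Azuma's inequality to the sum over ordered pairs (absorbing the possible overlap of $A$ and $B$ into a factor $8$ in the exponent), while you group by unordered pairs with multiplicities $c_{uv}$ and apply Hoeffding, which gives a slightly sharper tail but changes nothing structural.
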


\begin{proof}
We may assume $N \geq 3$ as otherwise the lemma is trivial. For a pair of vertices $x,y$, let $b(x,y)=\widetilde{G}(x,y)-G(x,y)$. Then $\mathbb{E}(b(x,y))=0$, and $|b(x,y)| \le 1$, for each $x,y$. Fix two subsets $A$ and $B$. Then \[e_{\widetilde{G}}(A,B)-e_G(A,B)=\sum_{\substack{x \in A\\y \in B}} b(x,y).\] 
Let $t=4N^{3/2}$. We can apply Azuma's inequality to show that 
\[P(|e_{\widetilde{G}}(A,B)-e_G(A,B)|>t)=P(|\sum_{\substack{x \in A\\y \in B}} b(x,y)|>t) \le 2e^{-t^2/(8|A||B|)} \le 2e^{-t^2/(8N^2)} =2e^{-2N}. \]
Note that there is an $8$ in the exponent instead of a $2$ because $A$ and $B$ may intersect.

As there are $2^{2N}$ pairs of sets of vertices $A$ and $B$, the probability that there is a pair of sets $A,B$ such that $|e_{\widetilde{G}}(A,B)-e_G(A,B)|> 4 N^{3/2}$ is at most 
\[2^{2N} 2e^{-2N} = e^{(2N+1)\ln 2 - 2N}=e^{\ln 2-2(1- \ln 2)N}<1.\]
Hence, with positive probability, we have $|e_{\widetilde{G}}(A,B)-e_{G}(A,B)| \le 4 N^{3/2}$ for every pair of sets $A,B$.  
\end{proof}

Next, we show that if we replace a weighted graph with an unweighted graph whose existence is guaranteed by the previous lemma, then the irregularity of any partition cannot decrease by too much. We note that an unweighted graph can be thought of as a weighted graph with weights $0$ and $1$.

\begin{lemma} \label{irregclose}
Suppose we have two weighted graphs $G$ and $G'$, on the same set $V$ of vertices, such that for any pair of subsets $A,B \subset V$, we have $|e_G(A,B)-e_{G'}(A,B)| \le t$. Then, the following holds:
\begin{enumerate}
\item For any pair of subsets $U,W \subset V$, we have \[|\irreg_G(U,W)-\irreg_{G'}(U,W)| \le 2t.\]

\item For any positive integer $k$ and partition $\mathscr{P}$ with  at most $k$ parts, \[|\irreg_G(\mathscr{P})-\irreg_{G'}(\mathscr{P})| \le 2k^2t.\]

\end{enumerate}

\end{lemma}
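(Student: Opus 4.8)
The plan is to prove the two parts in turn; part (2) will follow immediately from part (1) together with the triangle inequality, so the substance lies in part (1).

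For part (1), the key observation is that $G$ enters the definition of $\irreg_G(U,W)$ only through the edge counts $e_G(A,B)$ for $A\subset U$ and $B\subset W$, and through the single density $d_G(U,W)=e_G(U,W)/(|U||W|)$. Applying the hypothesis to the pair $(U,W)$ itself gives $|d_G(U,W)-d_{G'}(U,W)|\le t/(|U||W|)$. Fixing any $A\subset U$ and $B\subset W$, I would then bound the difference
\[
\bigl|\,e_G(A,B)-|A||B|\,d_G(U,W)\,\bigr|-\bigl|\,e_{G'}(A,B)-|A||B|\,d_{G'}(U,W)\,\bigr|
\]
by $|e_G(A,B)-e_{G'}(A,B)|+|A||B|\cdot|d_G(U,W)-d_{G'}(U,W)|$, which is at most $t+\tfrac{|A||B|}{|U||W|}\,t\le 2t$ since $|A|\le|U|$ and $|B|\le|W|$. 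Thus for every $A,B$ the quantity $|e_G(A,B)-|A||B|d_G(U,W)|$ exceeds the corresponding quantity for $G'$ by at most $2t$, and hence is at most $\irreg_{G'}(U,W)+2t$; taking the maximum over $A\subset U$, $B\subset W$ gives $\irreg_G(U,W)\le\irreg_{G'}(U,W)+2t$, and the reverse inequality follows by exchanging the roles of $G$ and $G'$.

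For part (2), a partition $\mathscr{P}$ with at most $k$ parts has at most $k^2$ ordered pairs of parts, and $\irreg_G(\mathscr{P})-\irreg_{G'}(\mathscr{P})=\sum_{X,Y\in\mathscr{P}}\bigl(\irreg_G(X,Y)-\irreg_{G'}(X,Y)\bigr)$; applying part (1) to each summand and the triangle inequality yields $|\irreg_G(\mathscr{P})-\irreg_{G'}(\mathscr{P})|\le 2k^2 t$.

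I do not anticipate any real obstacle. The only point needing a bit of care is that the density perturbation $|d_G(U,W)-d_{G'}(U,W)|$, when it occurs inside a term of the form $e(A,B)-|A||B|\,d(U,W)$, is multiplied by $|A||B|\le|U||W|$, so it contributes only an additional $t$ rather than something larger; one should also note that the hypothesis is being used both for the ``inner'' subsets $A,B$ and for the parts $U,W$ (respectively $X,Y$) themselves, which is legitimate since it is assumed for all pairs of subsets of $V$.
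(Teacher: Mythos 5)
Your proof is correct and follows essentially the same route as the paper: the key estimate in both is that for any subsets $A\subset U$, $B\subset W$, the quantity $e(A,B)-|A||B|\,d(U,W)$ changes by at most $t+\tfrac{|A||B|}{|U||W|}t\le 2t$ when passing from $G$ to $G'$, after which one compares maxima (the paper phrases this via the maximizing pair for $G$ rather than uniformly over all $A,B$, but the content is identical), and part (2) is the same triangle-inequality summation over at most $k^2$ pairs of parts.
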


\begin{proof}

For part 1, by symmetry between $G$ and $G'$, it suffices to show that \[\irreg_G(U,W)-\irreg_{G'}(U,W) \le 2t,\]
or equivalently, $\irreg_{G'}(U,W) \ge \irreg_G(U,W)-2t$.

By the definition of irregularity, there are subsets $U_1 \subset U$ and $W_1 \subset W$ which satisfy \[\left |e_G(U_1,W_1)-\frac{|U_1||W_1|}{|U||W|}e_{G}(U,W)\right |=\irreg_G(U,W).\]
As each of the terms on the left hand side changes by at most $t$ in changing $G$ to $G'$, the difference changes by at most $2t$. 
Therefore, \[ \left |e_{G'}(U_1,W_1)-\frac{|U_1||W_1|}{|U||W|}e_{G'}(U,W) \right | \ge \irreg_{G}(U,W)-2t.\]
Since $\irreg_{G'}(U,W)$ is the maximum of the left side over all sets $U_1 \subset U,W_1 \subset W$, this implies that \[\irreg_{G'}(U,W) \ge \irreg_G(U,W)-2t,\] completing the proof of part 1.

For part 2, let $\mathscr{P}$ partition $V$ into parts $V_1,V_2,\ldots,V_l$ with $l \le k$. Then \[\irreg_G(\mathscr{P})=\sum_{i,j=1}^l \irreg_G(V_i,V_j) .\] 
The definition for $G'$ is analogous. Thus, \begin{align*} 
 \left | \irreg_G(\mathscr{P})-\irreg_{G'}(\mathscr{P})\right | & = \left |\sum_{i,j=1}^l \irreg_G(V_i,V_j)-\irreg_{G'}(V_i,V_j) \right |\\
&\le \sum_{i,j=1}^l \left | \irreg_G(V_i,V_j)-\irreg_{G'}(V_i,V_j) \right| \le l^2 2t \le 2k^2t.
\end{align*}

\end{proof}

Combining the previous two lemmas, we have the following immediate corollary. 

\begin{corollary}\label{cortwolem}
For every weighted graph $G$ on $N$ vertices there is an unweighted graph $G'$ on the same set of vertices satisfying $|\irreg_{G'}(\mathscr{P})-\irreg_{G}(\mathscr{P})|\leq 8k^2N^{3/2}$ for every vertex partition $\mathscr{P}$ with at most $k$ parts.\end{corollary}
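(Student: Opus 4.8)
The plan is to simply chain the two preceding lemmas. Given a weighted graph $G$ on $N$ vertices with weights in $[0,1]$, apply Lemma \ref{countclose}: it produces (with positive probability, hence there exists) an unweighted graph $G'$ on the same vertex set such that $|e_{G'}(A,B)-e_G(A,B)|\le 4N^{3/2}$ for \emph{every} pair $A,B$ of vertex subsets. The key point worth emphasizing is that this single graph $G'$ satisfies the edge-count estimate for all pairs of subsets at once, so it will work uniformly for all partitions.

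Next, feed $G$ and $G'$ into Lemma \ref{irregclose} with $t=4N^{3/2}$. Part 2 of that lemma then gives, for every positive integer $k$ and every vertex partition $\mathscr{P}$ with at most $k$ parts,
\[
|\irreg_G(\mathscr{P})-\irreg_{G'}(\mathscr{P})|\le 2k^2 t = 2k^2\cdot 4N^{3/2} = 8k^2 N^{3/2},
\]
which is exactly the claimed bound.

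There is essentially no obstacle here: the corollary is a formal consequence of the two lemmas, and the only thing to check is bookkeeping of the constant ($2\cdot 4 = 8$) and the observation that the probabilistic existence statement in Lemma \ref{countclose} yields one fixed $G'$ valid for all subset pairs simultaneously, so that Lemma \ref{irregclose} applies to every $\mathscr{P}$ with the same $G'$.
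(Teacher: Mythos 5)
Your proof is correct and is exactly the paper's argument: the corollary is stated there as an immediate consequence of combining Lemma \ref{countclose} (with $t=4N^{3/2}$) and part 2 of Lemma \ref{irregclose}. Your observation that the single graph $G'$ works uniformly for all subset pairs, and hence for all partitions, is the right point to note.
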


Now, we will construct a weighted graph on $N$ vertices 
such that for any partition into at most $k$ parts, where $k$ is a tower of twos of height $10^{-26} \epsilon^{-2}$, the irregularity is at least 
$\epsilon N^2$. It will be clear from the construction that we may take $N$ to be arbitrarily large. Let $0<\gamma<1/2$. By Corollary \ref{cortwolem}, if we take $N \geq 64\gamma^{-2} \epsilon^{-2}k^4$, then $8k^2N^{3/2} \leq \gamma \epsilon N^2$, and we obtain an unweighted graph such that the irregularity is at least $(1-\gamma)\epsilon N^2$ in any partition into at most $k$ parts. This justifies why it is sufficient to construct a weighted graph. If $\epsilon \geq 10^{-13}$, the height of the tower is at most $1$, and the result follows from the discussion immediately after Theorem \ref{main}. Hence, we can and will assume that $\epsilon < 10^{-13}$.

The weighted graph $G$ we construct to get a lower bound on $M(\epsilon)$ is bipartite. This simplifies the analysis of the construction, and does not affect the constants by too much. For the edge-weighted bipartite graph $G$ between vertex sets $V$ and $W$, each of equal size, we will have a sequence of equitable vertex partitions $\mathscr{P}_0,\mathscr{P}_1,\ldots,\mathscr{P}_s$ of $V$, and $\mathscr{Q}_0,\mathscr{Q}_1,\ldots,\mathscr{Q}_s$ of $W$ with $\mathscr{P}_{i+1}$ a refinement of $\mathscr{P}_i$, $\mathscr{Q}_{i+1}$ a refinement of $\mathscr{Q}_i$ for $0 \leq i \leq s-1$, $|\mathscr{P}_i|=|\mathscr{Q}_i|$ for $0 \leq i \leq s$, and the number of parts of $\mathscr{P}_{i+1}$ is exponential in the number of parts of $\mathscr{P}_i$. More precisely, we have a sequence $x_i$, and we will divide each part of $\mathscr{P}_{i-1}$ and $\mathscr{Q}_{i-1}$ into $2x_i$ equal parts. We let $k_i$ be the number of parts of $\mathscr{P}_i$, so $k_i=2x_ik_{i-1}$. We start with $k_0=1$, $x_1=2^{10}$, and let $x_{i+1}=2^{x_i/16}$ for $i \geq 1$. For example, it follows that $x_2=2^{2^6}$ and $x_3=2^{2^{60}}$. Note that we did not say anything about the number of vertices in the parts of the last partition, which can be any positive integer. Thus, the number of vertices of the graph can be arbitrarily large, and thus the argument in the previous paragraph does indeed work.

Let $\alpha$  be the minimum number with $\alpha > 2^{26}\cdot 10000 \epsilon$ and $\alpha^{-1}$ is a multiple of $6$ (this will make our calculations later simpler). Since $\epsilon<10^{-13}$, we have that $2^{26}\cdot 10000 \epsilon<1/6$, and this implies that $\alpha \le 2^{27}\cdot 10000 \epsilon$. We take $s=\alpha^{-2}/36$. Thus, $s \ge \frac{1}{36} \cdot \frac{1}{2^{54}\cdot 10^8}\epsilon^{-2}\ge 10^{-26} \epsilon^{-2}$. Note that we only need to specify the edge weights between $V$ and $W$ because 
the non-edges (those pairs inside $V$ or inside $W$) have weight $0$. 

We begin with a weighted bipartite graph $G_0$ which has constant weight $1/2$ between $V$ and $W$, and take $\mathscr{P}_0$ and $\mathscr{Q}_0$ to be the trivial partitions of each side into a single part. For each $i$ from $1$ to $s$, we will construct a weighted bipartite graph $G_i$ between $V$ and $W$ with every edge edge weight equal to $-\alpha$, $0$, or $\alpha$. The graph $G_i$ will have the property that it is constant between any part in $\mathscr{P}_i$ and any part in $\mathscr{Q}_i$. Thus, $G_i$ is a blow-up of an edge-weighted graph between $\mathscr{P}_i$ and $\mathscr{Q}_i$. We will let $\widetilde G_i=G_0+G_1+\cdots+G_i$, i.e., the edge weight of a pair in $\widetilde G_i$ is the sum, over all $j \leq i$, of that pair's edge weight in $G_j$. 
The weighted graph $G$ is defined as $G:=\widetilde G_s=G_0 +\ldots+G_{s}$, so that the edge weight in $G$ is the sum of the edge weights of the corresponding edge in each $G_i$. 

For the construction, we have left to specify the edge weights in the $G_i$ for $i \geq 1$, and we do so recursively. We do the following for each $i$ from $1$ to $s$. If the (constant) weight in $\widetilde G_{i-1}$ across a pair $X \in \mathscr{P}_{i-1}$ and $Y \in \mathscr{P}_{i-1}$ is $0$ or $1$, then we call the pair $X,Y$ inactive, and the edge weight in $G_i$ across the pair $X,Y$ is $0$. Otherwise, we call the pair $X,Y$ active, and, since $G_0$ had constant weight $1/2$, and since $\alpha^{-1}$ is an even integer, the (constant) weight between the pair $X,Y$ in $\widetilde G_{i-1}$ is a multiple of $\alpha$ which is at least $\alpha$ and at most $1-\alpha$. 

For every pair $X \in \mathscr{P}_{i-1}, Y \in \mathscr{Q}_{i-1}$ of parts, we do the following. Recall that $X$ and $Y$ are each divided into $2x_{i}$ parts in $\mathscr{P}_{i}$ and $\mathscr{Q}_{i}$, respectively.
We randomly divide the $2x_{i}$ parts in $X$ into two groups of size $x_{i}$, and let $X_Y^0$ and $X_Y^1$ be the vertices in each of these parts. Thus, $X=X_Y^0 \sqcup X_Y^1$ is an equitable partition of the vertices in $X$. We define $Y=Y_X^0 \sqcup Y_X^1$ analogously. Thus, each $Y \in \mathscr{Q}_{i-1}$ gives a random  partition of $X$, and we make these random partitions independently. If $X,Y$ is an active pair, then, for $a=0,1$, the edge weight in $G_i$ is $\alpha$ between $X_Y^a$ and $Y_X^a$, and $-\alpha$ between $X_Y^a$ and $Y_X^{1-a}$. See Figure \ref{figure0}. This completes the construction of $G$. 

\begin{figure}[h]

\centering
\begin{tikzpicture}

\draw (4,4) ellipse (1.5 and 5); 
\node at (4,9.5) {\LARGE $X$}; 
\node at (9,9.5) {\LARGE $Y$}; 
\node at (2,7) {\LARGE $X_Y^1$}; 
\node at (2,1) {\LARGE $X_Y^0$}; 
\node at (11,7) {\LARGE $Y_X^1$}; 
\node at (11,1) {\LARGE $Y_X^0$};
\draw (2.5,4) -- (5.5,4);
\draw (9,4) ellipse (1.5 and 5);
\draw (7.5,4) -- (10.5,4);
\draw[line width=4] (4,2.7) -- (9,5.7);
\node at (5.7,5.1) {\LARGE $-\alpha$};
\draw[line width=4] (4,5.7) -- (9,2.7);
\node at (5.7,3.2) {\LARGE $-\alpha$};
\draw[line width=4] (4,1.2) -- (9,1.2);
\node at (6.5,1.5) {\LARGE $+\alpha$};
\draw[line width=4] (4,7) -- (9,7);
\node at (6.5,7.3) {\LARGE $+\alpha$};

\end{tikzpicture}

\caption{An active pair of parts $X,Y$}
\label{figure0}
\end{figure}
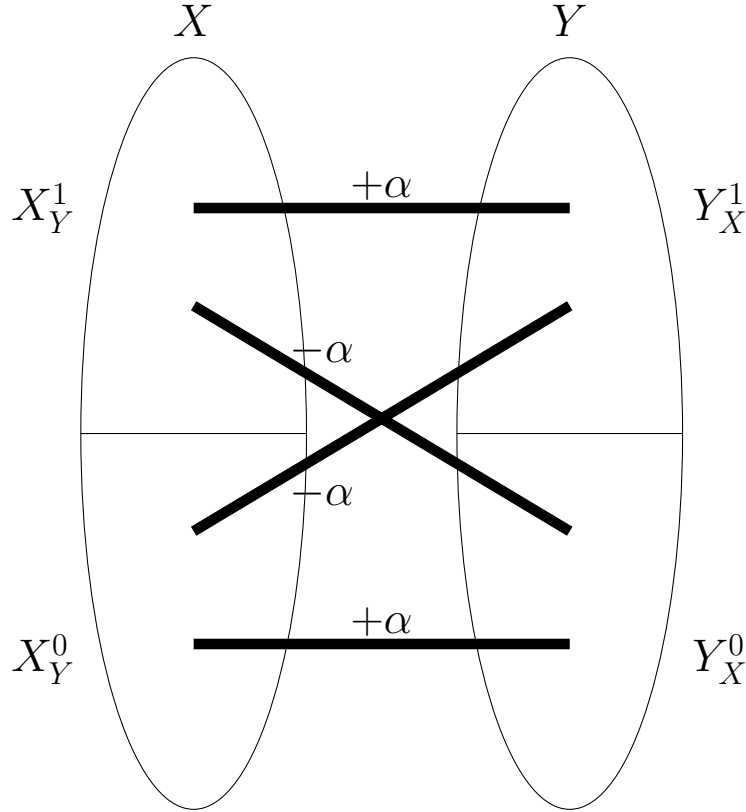

Note that if a pair of vertices goes across an inactive pair in step $i$, then its weight in $\widetilde G_j$ for $j \geq i$ (and hence in $G$) is fixed at $0$ or $1$, and it will go across inactive pairs at each later step. We chose $s=\alpha^{-2}/36$ small enough to guarantee that only a small fraction of pairs will be inactive. In fact, the following lemma is true. 

\begin{lemma} \label{inactive}
In each step $i$, for any $X \in \mathscr{P}_i$, the proportion of $Y \in \mathscr{Q}_i$ such that the pair $X,Y$ is inactive is at most $.05$, that is, there are at most $.05|\mathscr{Q}_i|$ such $Y$. The analogous statement holds for any part $Y \in \mathscr{Q}_i$.
\end{lemma}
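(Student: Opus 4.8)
### Proof Plan

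The plan is to track, for a fixed part $X \in \mathscr{P}_i$, the random variable counting how many $Y \in \mathscr{Q}_i$ are inactive, and show its expectation is small — then a union bound or a second-moment argument handles all parts at once. Recall that a pair $X,Y$ (with $X \in \mathscr{P}_j$, $Y \in \mathscr{Q}_j$) is inactive at step $j+1$ precisely when the constant weight of $\widetilde G_j$ across $X \times Y$ has reached $0$ or $1$; since $\widetilde G_0$ has weight $1/2$ everywhere and each active step adds $\pm\alpha$ to the weight across a sub-pair, the weight across a given $(X,Y)$ pair in $\widetilde G_j$ performs a symmetric $\pm\alpha$ random walk started at $1/2$, and the pair becomes inactive the first time this walk hits $0$ or $1$. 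So the key quantity is: starting a symmetric simple random walk at $1/2$ with step size $\alpha$, what is the probability it has been absorbed at $\{0,1\}$ within $s$ steps?

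First I would make precise the random-walk description. Fix a final part $X \in \mathscr{P}_s$ and a final part $Y \in \mathscr{Q}_s$; these are contained in a nested sequence $X = X^{(s)} \subset X^{(s-1)} \subset \cdots \subset X^{(0)} = V$ and similarly for $Y$. For each $j$, let $w_j$ be the weight of $\widetilde G_j$ across the pair $(X^{(j)}, Y^{(j)})$. As long as the relevant pair is active, $w_{j} - w_{j-1}$ is $+\alpha$ or $-\alpha$, each with probability $1/2$, and these increments are independent across $j$ (the random splits at different steps are independent, and at step $j$ the sign depends on whether $X^{(j)}$ landed in the "same" or "opposite" group as $Y^{(j)}$, which is a fair coin flip independent of the past). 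Once the pair goes inactive, $w_j$ stays put at $0$ or $1$. Hence $w_j$ is exactly a lazy-absorbed symmetric random walk on $\{0, \alpha, 2\alpha, \ldots, 1\}$ with absorbing barriers $0$ and $1$, started at $1/2$. The pair $(X,Y)$ is inactive at step $s$ iff this walk is absorbed by time $s$. By the gambler's-ruin / maximal-inequality estimate, the probability that a symmetric $\pm\alpha$ walk started at distance $1/2$ from each barrier reaches a barrier within $s$ steps is $O(\sqrt{s}\,\alpha)$ — more precisely, the walk's position after $s$ unabsorbed steps has standard deviation $\alpha\sqrt{s}$, so by Kolmogorov's inequality (or reflection) the chance of ever leaving an interval of half-width $1/2$ is at most a constant times $s\alpha^2/(1/2)^2 = 4s\alpha^2$. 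With $s = \alpha^{-2}/36$ this gives probability at most $4/36 = 1/9$, and being a little more careful with the constant in the maximal inequality (using that we want $\le 0.05$) should close the gap; if not, one can afford to shrink $s$ by an absolute constant, or sharpen the reflection bound, since the final constant $10^{-26}$ in Theorem \ref{main} has plenty of slack.

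Then, to pass from "expected inactive fraction is small" to "inactive fraction is small for every $X$ simultaneously," I would observe that the events "$(X,Y)$ inactive" for the various $Y \in \mathscr{Q}_i$ sharing a fixed $X$ are essentially independent — the random walks $w_j^{(X,Y)}$ for distinct $Y$ use disjoint sets of coin flips (the split of $X^{(j)}$ relative to $Y^{(j)}$ is drawn independently for each $Y^{(j)}$). So the number of inactive $Y$ is a sum of independent indicators with mean at most $p|\mathscr{Q}_i|$ where $p \le 1/9$ (say), and a Chernoff bound makes the probability it exceeds $0.05|\mathscr{Q}_i|$ doubly-exponentially small in $|\mathscr{Q}_i| \ge k_1 = 2x_1 \ge 2$... here one needs $|\mathscr{Q}_i|$ large, which holds since $i \ge 1$ forces at least $2^{11}$ parts; then a union bound over all $X \in \mathscr{P}_i$, all $i \le s$, and the symmetric statement for $Y$, still leaves total failure probability below $1$. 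Hence a valid construction exists. The main obstacle is getting the random-walk absorption probability below the stated $0.05$ with the specified $s$: this requires either a clean maximal inequality with a good constant, or observing that the bound $0.05$ is not tight and rechecking that the downstream arguments (and the choice $s = \alpha^{-2}/36$) were set up with this slack in mind — I expect the paper in fact chooses the constant $1/36$ precisely so that this estimate comes out to $0.05$ with room to spare.
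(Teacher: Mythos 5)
Your overall picture---the weight of $\widetilde G_j$ across a nested pair of parts performs a symmetric $\pm\alpha$ walk started at $1/2$ and the pair goes inactive when the walk is absorbed at $\{0,1\}$---matches the paper's, but you put the randomness in the wrong place, and this creates a genuine gap. The paper's proof makes no appeal to the randomness of the construction at all: it fixes $X\in\mathscr{P}_i$, draws $Y$ \emph{uniformly from $\mathscr{Q}_i$}, and observes that, conditioned on which part of $\mathscr{Q}_{j-1}$ contains $Y$, the sign $y_j$ is $\pm 1$ with probability exactly $1/2$ because each part of $\mathscr{Q}_{j-1}$ is split into two groups of \emph{exactly} $x_j$ subparts each. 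The increments are therefore genuinely independent fair signs over this uniform choice of $Y$, the resulting probability \emph{is} the proportion of inactive $Y$, and the bound holds deterministically for every realization of the construction (as the paper notes immediately after the lemma); no concentration or union bound is needed, and the lemma consumes none of the probability budget reserved for the two properties the construction must satisfy.

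By contrast, you compute the absorption probability over the construction's coin flips and then need concentration to turn an expected inactive fraction into an actual one for every $X$ simultaneously. Your independence claim there is false: two parts $Y,Y'\in\mathscr{Q}_i$ lying in the same part of $\mathscr{Q}_{j-1}$ have walks with an \emph{identical} prefix through step $j-1$ (the relevant splits are the same random objects for both), so the inactivity indicators are positively correlated through shared prefixes---for instance, if the common walk is already absorbed by step $i-1$, then all $2x_i$ parts of $\mathscr{Q}_i$ descending from that part of $\mathscr{Q}_{i-1}$ are simultaneously inactive with $X$. A Chernoff bound for independent indicators does not apply, and repairing this would require a martingale or tree-structured argument whose failure probability must then be folded into the union bound alongside the other required properties. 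Secondarily, your quantitative estimate $4s\alpha^2=1/9$ for the single-pair absorption probability does not reach $.05$; the paper gets $4e^{-1/(8\alpha^2 s)}=4e^{-4.5}<.05$ from the reflection identity $P(\exists j\le i: S_j\ge t)=P(S_i\ge t)+P(S_i\ge t+1)$ followed by Chernoff, and since $s=\alpha^{-2}/36$ and the constant $.05$ are hard-wired into the downstream estimates (e.g.\ the $C-.8$ in Lemma \ref{sumalot}), you cannot simply shrink $s$ to compensate.
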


\begin{proof}
Let $Y \in \mathscr{Q}_i$ be a part picked uniformly at random.  For each $1 \le j \le i$, let $y_j$ be the number such that $y_j\alpha$ is the  value of (any) edge between $X$ and $Y$ in the original $G_j$ (they all have the same value), if this is nonzero. This depends only on the partitions of the corresponding parts of step $j-1$, thus, we can define this even if the pair from step $j-1$ are inactive. With this extended definition, $y_j$ is a random variable that is $1$ or $-1$, each with probability $1/2$. In fact, these $y_j$ are independent. Indeed, for any $j$, the numbers $y_1,y_2,\ldots,y_{j-1}$ depend only on which part of $\mathscr{Q}_{j-1}$ is a superset of $Y$, but if we fix the part of $\mathscr{Q}_{j-1}$ which is a superset of $Y$, then $y_j$ is still $1$ or $-1$ with probability $1/2$. Let $S_j=y_1+y_2+\cdots+y_j$. The pair will be inactive if and only if for some $j \le i$, \[|S_j|\ge \frac{1}{2 \alpha}.\]

Now, we use the following well-known fact, which follows easily from the reflection principle: Let $t$ be a positive integer. Then

\[P( \exists j \le i: S_j \ge t)=P(S_i \ge t)+P(S_i \ge t+1). \]

Now, since the $y_i$ and hence the $S_i$ are symmetrically distributed and  $P(S_i \ge t+1) \leq P(S_i \ge t)$, we can conclude by the Chernoff bound that the probability that there is a $j \le i$ for which $|S_j| \ge \frac{1}{2\alpha}$ is at most \[4e^{-(\frac{1}{2\alpha})^2/(2i)} \le 4 e^{-1/(8\alpha^2i)} \le 4 e^{-1/(8\alpha^2s)}.\]

Thus, substituting in $s=1/36\alpha^{-2}$, then this will be at most $4e^{-1/(8/36)} \leq .05$. That is, in any step, at most a $.05$ proportion of the pairs of parts containing any given part will be inactive.
\end{proof}

Note that the conclusion of the above lemma is satisfied no matter what our choices are for the partitions in each step. In the following, we describe the properties that we want our graph to satisfy, and show that with positive probability, our construction will satisfy them. We will show that if the weighted graph $G$ constructed above has the desired properties, then it will give a construction which verifies Theorem \ref{main}. In fact, in Theorem \ref{maingen}, we will show that it has the stronger property that any vertex partition of $G$ with irregularity at most $\epsilon |V(G)|^2$ is not far from being a refinement of $\mathscr{P}_s$.

Fix an $i \ge 1$, a pair $X \in \mathscr{P}_{i-1}$, and $Y \in \mathscr{Q}_{i-1}$, and $a,b \in \{0,1\}$, such that the weight of $\widetilde G_i$ between $X_Y^a$ and $Y_X^b$ is not $0$ or $1$ (note that it is constant). In other words, any part of $\mathscr{P}_i$ which is a subset of $X_Y^a$ and any part of $\mathscr{Q}_i$ which is a subset of $Y_X^b$ form an active pair. The number of parts of $\mathscr{P}_i$ in $X_Y^a$ is $x_i$, and each such part is divided into 
$2x_{i+1}$ parts in $\mathscr{P}_{i+1}$. Similarly, the number of parts of $\mathscr{Q}_i$ in $Y_X^b$ is $x_i$, and each such part is divided into $2x_{i+1}$ parts in $\mathscr{Q}_{i+1}$. Fix a part $B$ of $\mathscr{P}_{i}$ with $B\subset X_Y^a$. Any part $C$ of $\mathscr{Q}_i$ with $C \subset Y_X^b$ will split $B$ into two collections of parts of $\mathscr{P}_{i+1}$, where both collections have size $x_{i+1}$. Now we want the bipartitions $B=B_C^0 \cup B_C^1$ of $B$ to satisfy the following two properties (see Figure 2).

\begin{enumerate}
\item Given two parts $C$ and $C'$ of $\mathscr{Q}_i$ that are subsets of $Y_X^b$, we obtain two different partitions of $B$, $B_{C}^0 \cup B_{C}^1$ and $B_{C'}^0 \cup B_{C'}^1$. Let $z_{hj}$ be the number of parts of $\mathscr{Q}_{i+1}$ in $B_{C}^h \cap B_{C'}^j$. Since $C$ and $C'$ both divide $B$ into two equal parts, we can see that $z_{00}=z_{11}$ and $z_{10}=z_{01}$. Also, $z_{00}+z_{01}=x_{i+1}$. Let $z=z(C,C')=z_B(C,C')=z_{00}-z_{01}$. We want every pair $C$ and $C'$ to satisfy $|z(C,C')| \le r:= \sqrt{6x_{i+1} \ln x_i}$. 

\item For any two vertices $u$ and $v$ in $B$ in different parts of $\mathscr{P}_{i+1}$, we say that a part $C$ \emph{separates} $u$ and $v$ if $u$ and $v$ lie in different parts in the partition $B=B_C^0 \cup B_C^1$. Let $y(u,v)$ be the number of parts of $\mathscr{Q}_i$ in $Y_X^b$ that do not separate $u$ and $v$ minus the number of parts of $\mathscr{Q}_i$ in $Y_X^b$ that do separate $u$ and $v$. Then, for any $v,u$, we want $|y(v,u)|\le t: = x_i/2$. \end{enumerate}

\begin{figure}[h]
\centering
\begin{tikzpicture}
\draw (0,0) ellipse (2 and 5);
\node at (0,5.5) {\LARGE $X \in \mathscr{P}_{i-1}$};
\draw (8,0) ellipse (2 and 5);
\node at (8,5.5) {\LARGE $Y \in \mathscr{Q}_{i-1}$};
\draw (0,1.5) circle (1.2);
\node at (0,3) {\LARGE $B \in \mathscr{P}_i$};
\draw (8,3) circle (1.2);
\node at (8,3) {\LARGE $C \in \mathscr{Q}_i$};
\draw (8,0) circle (1.2);
\node at (8,0) {\LARGE $C' \in \mathscr{Q}_i$};
\draw (-1.2,1.5) -- (1.2,1.5);
\node at (-1.5,1.8) {$B_C^1$};
\node at (-1.5,1.1) {$B_C^0$};
\draw (-.3,2.65) -- (.3,.35);
\node at (.75,.15) {$B_{C'}^1$};
\node at (.1,0) {$B_{C'}^0$};
\fill (-.5,1) circle (1pt) node[right] {$u$};
\fill (-.7,1.8) circle (1pt) node[right] {$v$};

\end{tikzpicture}

\caption{Part $B$ is divided into two parts by both $C$ and $C'$, the horizontal line shows how $C$ divides it, the other line how $C'$ divides it. We can see that $C$ separates $u$ and $v$, but $C'$ does not.}
\end{figure}
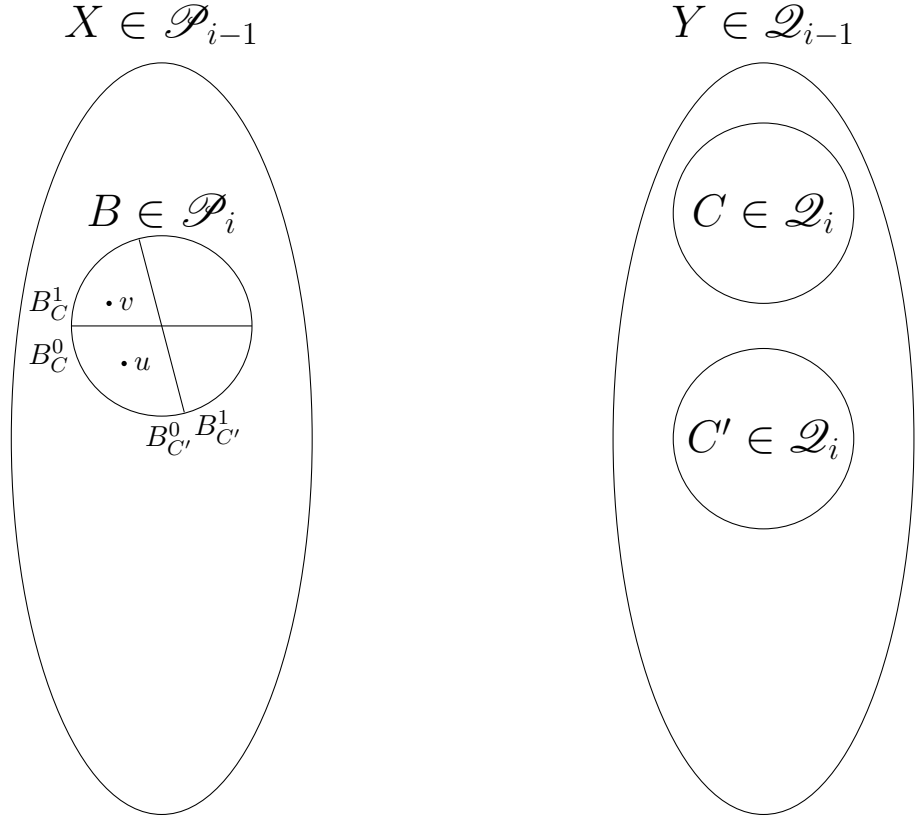

We will show that for each of these properties, the probability of failure, given a random set of $x_i$ partitions of $B$, is less than $1/2$, implying that the probability that neither of them fails is positive. The partitions are independent for different choices of $B$, so with positive probability, the two properties are satisfied for each $B \in \mathscr{P}_{i-1}$, $B \subset X_Y^a$. We require the analogous conditions for the bipartitions of each part in $\mathscr{Q}_i$ in $Y_X^b$. For the sake of brevity, we omit explicitly stating  these conditions for the other side. By symmetry between the two parts in the bipartite graph, the analogous conditions also hold with positive probability on the other side, and since the partitions on the two sides are independent, with positive probability they hold simultaneously for both sides. The conditions are also independent if we take a different pair $X,Y$ and $a,b$, or look at a different step $i$. This clearly implies that there exists a set of good partitions, that is, a set of partitions such that these two properties are satisfied for each $i$, for any of the required pairs $X \in \mathscr{P}_{i-1},Y \in \mathscr{Q}_{i-1}$, $a,b \in \{0,1\}$, such that the pairs of parts in step $i$ between $X_Y^a$ and $Y_X^b$ are active.

Thus, assume $i$ is fixed, $X \in \mathscr{P}_{i-1}$, $Y \in \mathscr{Q}_{i-1}$, and $a,b \in \{0,1\}$ such that any pair of parts of step $i$ between $X_Y^a$ and $Y_X^b$ is active, and $B \in \mathscr{P}_{i-1}$ with $B \subset X_Y^a$ is fixed. For fixed $C$ and $C'$, the value of $z_B(C,C')$ follows a hypergeometric distribution. As the hypergeometric distribution is at least as concentrated as the corresponding binomial distribution (for a proof, see Section 6 of \cite{H63}), we can apply the Chernoff bound to obtain
\[P(|z(C,C')|>r) \le 2e^{-r^2/(2x_{i+1})}.\]
Recall that $r= \sqrt{6x_{i+1} \ln x_i} $. Since the number of pairs is $x_i(x_i-1)/2$, the probability of the first property failing is at most 
\[\left(x_i(x_i-1)/2\right)2e^{-r^2/(2x_{i+1})} = x_i(x_i-1)e^{-3\ln x_i}= \frac{x_i-1}{x_i^2}<1/2.\] 

Now we show that the second property holds with probability of failure less than $1/2$. For a fixed pair $u$ and $v$, each bipartition separates them with probability $\frac{x_{i+1}}{2x_{i+1}-1}=\frac{1}{2}+\frac{1}{4x_{i+1}-2}$, independently of the other bipartitions. 

For a real number $p$ and positive integer $n$, let $B_{n,p}$ denote the binomial distribution with $n$ trials and probability value $p$, so if $Z$ is a random variable with distribution $B_{n,p}$, then $P(Z=k)={n \choose k}p^k(1-p)^{n-k}$. If $Z_1$ is the random variable with distribution $B_{n,\frac{1}{2}}$ and $Z_2$ is the random variable with distribution $B_{n,\frac{1}{2}+\delta}$ with $|\delta|<\frac{\ln 2}{2n}$, then $P(Z_2=k)\leq 2P(Z_1=k)$ for each $k$. Indeed,
\[\frac{P(Z_2=k)}{P(Z_1=k)}=(1+2\delta)^k(1-2\delta)^{n-k} \le (1+2 |\delta|)^n \le e^{2 |\delta| n} < 2.\] 
Note that we can write $y(u,v)$ as $2x(u,v)-n$, where $x(u,v) \sim B_{n,p}$, with $n=x_i$ and $p=\frac{1}{2}+\delta$ with $\delta=-\frac{1}{4x_{i+1}-2}$. Since $x_{i+1} = 2^{x_i/16}$, and $x_i \ge x_1=2^{10}$, it is easy to check that $|\delta|=\frac{1}{4x_{i+1}-2}<\frac{\ln 2}{2x_i}=\frac{\ln 2}{2n}$. If we instead take $y'(u,v)=2x'(u,v)-n$ with $x'(u,v) \sim B_{n,1/2}$, by the Chernoff bound, we have 
\[P\left(\left |y'(v,u)\right | > t\right) < 2e^{-t^2/(2x_i)}.\]

By the above analysis of binomial distributions with probability $p$ near $1/2$, we thus have
\[P\left(\left |y(v,u)\right | > t\right) < 2 \cdot 2e^{-t^2/(2x_i)}.\]

Recall that $t=x_i/2$, and $x_{i+1}=2^{x_i/16}$. We have ${2x_{i+1} \choose 2}<2x_{i+1}^2$ pairs of parts of $\mathscr{P}_{i+1}$ which are subsets of $B$. Thus, since $x_i \ge x_1=2^{10}$, the probability of the second property failing is less than
\[2x_{i+1}^2 4e^{-t^2/(2x_i)} = \exp( (\ln 2)(3+x_i/8)-x_i/8)= \exp( x_i(\ln 2  - 1)/8 + 3\ln 2) < 1/2. \]

Thus, we indeed have a probability of failure less than $1/2$ for each of the two properties. For the remainder of the proof we will fix such a graph $G$ with these properties for each $i$, for any pair $X \in \mathscr{P}_{i-1},Y \in \mathscr{Q}_{i-1}$, any $a,b \in \{0,1\}$, and any $B \in \mathscr{P}_{i}$ with $B \subset X_Y^a$, and the same conditions holds when we switch the two parts. We will show that the irregularity is at least $\epsilon|V(G)|^2$ in any partition with not too many parts.  

\begin{theorem}\label{maingen} 
The weighted graph $G$ constructed in the above manner has the property that any partition with less than $.97(|\mathscr{P}_s|+|\mathscr{Q}_s|)/x_1$ parts has irregularity greater than $\epsilon |V(G)|^2$.
\end{theorem}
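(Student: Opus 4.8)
We argue by contradiction. Suppose $\mathscr{R}$ is a vertex partition of $G$ with $\irreg_G(\mathscr{R})\le\epsilon|V(G)|^2$ and with $k:=|\mathscr{R}|<0.97(|\mathscr{P}_s|+|\mathscr{Q}_s|)/x_1=1.94\,k_s/x_1$ parts; since $x_1=2^{10}$, the only quantitative consequence we will need at the end is that $k/k_s$ is tiny, in particular $k<k_s/2$. Rather than reduce to partitions that respect the bipartition $V(G)=V\cup W$ (which would cost only a harmless factor of at most $2$, absorbed by the $x_1^{-1}$ in the statement), it is cleaner to keep $\mathscr{R}$ as is and, for each part $S\in\mathscr{R}$, treat its two sides $S\cap V$ and $S\cap W$ separately throughout, measuring the first against the refining sequence $\mathscr{P}_0,\dots,\mathscr{P}_s$ and the second against $\mathscr{Q}_0,\dots,\mathscr{Q}_s$.

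For $S\in\mathscr{R}$ and $0\le i\le s$, set $m_i^V(S)=\max_{P\in\mathscr{P}_i}|S\cap V\cap P|$ and $m_i^W(S)=\max_{Q\in\mathscr{Q}_i}|S\cap W\cap Q|$, so $m_0^V(S)=|S\cap V|$, $m_0^W(S)=|S\cap W|$, and both sequences are non-increasing in $i$ because $\mathscr{P}_i$ refines $\mathscr{P}_{i-1}$ and $\mathscr{Q}_i$ refines $\mathscr{Q}_{i-1}$. Write $\Delta_i(S)=(m_{i-1}^V(S)-m_i^V(S))+(m_{i-1}^W(S)-m_i^W(S))\ge 0$, the amount by which $S$ ``spreads out,'' on one side or the other, when we pass from scale $i-1$ to scale $i$. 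The heart of the proof is a structural lower bound --- which is precisely what Sections \ref{sing} and \ref{graphchangesec} are meant to supply --- of the form
\[ \irreg_G(\mathscr{R})\;\ge\;c\,\alpha\,|V|\sum_{i=1}^{s}\sum_{S\in\mathscr{R}}\Delta_i(S) \]
for an absolute constant $c>0$. (The precise form may carry a different but equivalent-in-spirit weighting; what matters is a gain of order $\alpha$ per unit of spreading, with an extra factor of order $|V|$ from averaging over the opposite side.) Informally, on an \emph{active} block $X\times Y$ with $X\in\mathscr{P}_{i-1}$, $Y\in\mathscr{Q}_{i-1}$, the layer $G_i$ contributes exactly $\alpha(a_0-a_1)(b_0-b_1)$ to the discrepancy of a pair $(S,T)$, where $a_j=|S\cap X_Y^j|$ and $b_j=|T\cap Y_X^j|$; choosing test sets $S'\subseteq S$, $T'\subseteq T$ so as to isolate one well-chosen opposite part $Y$ exposes a discrepancy of order $\alpha$ times how much $S$ has split within $X$ at step $i$. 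Layers $G_0,\dots,G_{i-1}$ are constant on the relevant blocks and contribute nothing to such a test, while layers $G_{i+1},\dots,G_s$ contribute only a lower-order error --- this is exactly where the bipartite expander mixing lemma of Section \ref{sing}, together with the two quasirandomness properties built into $G$ (the bounds on $z_B(C,C')$ and on $y(u,v)$), is used. Lemma \ref{inactive} ensures that inactive blocks, on which $G_i$ vanishes, are too sparse to spoil the estimate, and a careful accounting arranges that each unit of irregularity is charged to at most one step --- the main new ingredient over earlier constructions.

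Granting the displayed inequality, the conclusion is immediate. Telescoping, $\sum_{i=1}^{s}\Delta_i(S)=(|S\cap V|-m_s^V(S))+(|S\cap W|-m_s^W(S))$. Since $\mathscr{P}_s$ and $\mathscr{Q}_s$ are equitable with $k_s$ parts and (as the construction permits) $|V|=|W|$ is a multiple of $k_s$, we have $m_s^V(S)\le|V|/k_s$ and $m_s^W(S)\le|W|/k_s=|V|/k_s$. Summing over $S\in\mathscr{R}$,
\[ \sum_{i=1}^{s}\sum_{S\in\mathscr{R}}\Delta_i(S)\;=\;\bigl(|V|+|W|\bigr)-\sum_{S\in\mathscr{R}}\bigl(m_s^V(S)+m_s^W(S)\bigr)\;\ge\;2|V|-k\cdot\frac{2|V|}{k_s}\;=\;2|V|\Bigl(1-\frac{k}{k_s}\Bigr)\;>\;|V|, \]
using $k<k_s/2$. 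Hence $\irreg_G(\mathscr{R})>c\,\alpha\,|V|^2=\tfrac{c}{4}\,\alpha\,|V(G)|^2$. Finally $\alpha>2^{26}\cdot 10000\,\epsilon$ by construction, while $c$ is an absolute constant fixed by the structural lemma; the enormous factor $2^{26}\cdot 10000$ in the definition of $\alpha$ is precisely what guarantees $\tfrac{c}{4}\alpha>\epsilon$, so $\irreg_G(\mathscr{R})>\epsilon|V(G)|^2$, contradicting our assumption and proving the theorem.

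The entire difficulty is concentrated in the structural inequality of the second paragraph, and it is genuinely delicate for the reasons stressed in the introduction: (i) one cannot guarantee any irregularity from a single part $S$ --- a distinguished part can be made regular with everything else --- so the bound must come from an average over many parts and over the opposite side at once; (ii) the later layers $G_{i+1},\dots,G_s$ could a priori cancel the step-$i$ signal, and excluding this is exactly a quasirandomness statement about the random bipartitions, for which the ``good'' properties of $G$ and the singular-value machinery of Section \ref{sing} are needed; and (iii) a given part may shed mass over many different steps, and these contributions must be collected without any double-charging, which forces the bookkeeping organized around the quantities $m_i^V(S),m_i^W(S)$.
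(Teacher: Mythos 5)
Your closing deduction — telescoping the spreading $\Delta_i(S)$ against the number of parts — is sound, and the high-level strategy (charge $\Theta(\alpha)$ of irregularity per unit of ``spreading'' at each step, then sum over steps) is indeed the strategy of the paper. But there is a genuine gap: the displayed structural inequality carries the entire content of the theorem, and you assert it rather than prove it; moreover, in the form you state it, it is very likely false. You charge at full rate for every decrease of the running maximum $m_i^V(S)$, even after the largest piece of $S$ has dropped below $|S\cap V|/2$. The mechanism for detecting a split of $S$ at step $j$ compares the small split-off piece $S_{j-1,Y}$ against a \emph{majority} piece lying inside a single part $X\in\mathscr{P}_{j-1}$; this is what fixes the sign of every discrepancy term for fixed $X,Y,a,b$, which in turn is what allows the error from $G-\widetilde G_j$ to be controlled in aggregate via Lemma \ref{regular} (that lemma bounds sums of entries, not sums of absolute values, so sign coherence is essential) and puts one in position to apply Lemma \ref{irreghelpsinglepair} with $|S'|\le|S''|$. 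Once no piece of $S$ is a majority, its pieces lie in different parts $X,X'$ of $\mathscr{P}_{j-1}$ whose bipartitions $X_Y^0\cup X_Y^1$ are independent, the signs mix, and no such comparison is available; the paper therefore charges such a part essentially once (types B and C in Lemma \ref{addingup}, at the cost of a factor $6$), not at every subsequent step. The remark in the introduction that a single part can be made regular with every other part while being fully spread out shows the charging cannot be done part-by-part at all: such a part has $\sum_i\Delta_i(S)\approx|S|$ but contributes $o(\alpha)\,|S|\,|V(G)|$ to the irregularity, so your inequality survives only if other parts make up the deficit — an averaging statement that is exactly the hard content of Lemmas \ref{sumalot} and \ref{addingup} and that your sketch does not supply.

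A second, independent problem is that your inequality ignores the coupling between the two sides. To extract irregularity from the spreading of $S\subset V$ at step $j$, the opposite parts $T$ must still be concentrated in single parts of $\mathscr{Q}_{j-1}$ — this is the hypothesis ``$C|W|$ blue vertices in $W$'' in Lemmas \ref{addingup} and \ref{onestep}. If both sides spread out at the same step, the test sets $T'$ straddle both $Y_X^0$ and $Y_X^1$ and the $\pm\alpha$ signal cancels; this is precisely why the paper needs two separate estimates (the cumulative Lemma \ref{addingup} and the single-step Lemma \ref{onestep}) and the case analysis in Lemma \ref{withassumption}. Relatedly, your claim that declining to reduce to a refinement of $\mathscr{P}_1\cup\mathscr{Q}_1$ costs ``only a harmless factor of at most $2$'' is unsubstantiated: the paper pays a factor $32x_1^2$ for this reduction via Lemma \ref{divide}, which is why $\alpha$ is taken as large as $2^{26}\cdot 10^4\epsilon$ and why the theorem's part count carries the factor $x_1^{-1}$. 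As it stands, the proposal is a correct reading of the architecture of the proof, but not a proof.
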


We will show that we can assume that our partition is a refinement of the first partition, $\mathscr{P}_1 \cup \mathscr{Q}_1$. This will give us a factor of $32x_1^2$ in the irregularity, and $4x_1$ in the size of the partition. Assuming this, we will show something a bit stronger. We will show that if the partition has irregularity at most $32x_1^2 \epsilon |V(G)|^2$, then the partition is not too far from being a refinement of the joint partition $\mathscr{P}_s \cup \mathscr{Q}_s$, which implies that it has at least half as many parts.

\section{Singular Values and Edge Distribution in Matrices} \label{sing}

In this section, we prove some properties about the edge distribution of bipartite graphs. The key lemma gives a bipartite analogue of the expander mixing lemma of Alon and Chung \cite{AC88}. While it is a modification of a standard proof, we give the proof here for completeness. 

If we have any weighted bipartite graph (with potentially negative weights) between two sets $X$ and $Y$, each of size $n$, we can represent it by an $n \times n$ matrix $A$, with $X$ corresponding to the rows, $Y$ corresponding to the columns, and the entry value equal to the weight of the corresponding edge. Let $A=O\Lambda U^T$ be the singular value decomposition. Thus, $\Lambda = \diag(\sigma_1,\sigma_2,\ldots,\sigma_n)$, $\sigma_i \ge 0$, and $O$ and $U$ are orthogonal real matrices (recall that $A$ has real entries). Let $\lambda=\max_{i} \sigma_i$. Also, for any $v \in X$, $C \subset Y$, define 
\[d_C(v)=\sum_{w \in C} a_{vw}.\] This is the sum of the edge weights between $v$ and the vertices in $C$. We next observe that if $\lambda$ is small, then the graph between $X$ and $Y$ is very uniform. The following lemma shows that for any subset of the vertices on one side, most of the vertices on the other side will have density about equal to the average density, which in our case is zero. This will imply that the number of edges between any two sets is small in absolute value compared to their sizes.

\begin{lemma} \label{expmix}
Suppose $A$ is a matrix representing a weighted graph $G$ between $X$ and $Y$, and suppose $\lambda$ is the maximum of the singular values $\sigma_i$. We have the following.

\begin{enumerate}

\item For any $C \subset Y$,
\[ \sum_{v \in X} d_C(v)^2 \le \lambda^2 |C| .\]

\item For any $B \subset X$ and $C \subset Y$,
\[|e(B,C)| \le \lambda \sqrt{|B||C|} .\]

\end{enumerate}

\end{lemma}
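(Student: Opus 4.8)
The plan is to prove both parts via the singular value decomposition $A = O \Lambda U^T$, exploiting the fact that $O$ and $U$ are orthogonal and $\sigma_i \le \lambda$ for all $i$. For part 1, I would write $d_C(v) = \sum_{w \in C} a_{vw}$ as the $v$-th entry of the vector $A \mathbf{1}_C$, where $\mathbf{1}_C \in \mathbb{R}^Y$ is the indicator vector of $C$. Then $\sum_{v \in X} d_C(v)^2 = \|A \mathbf{1}_C\|^2$. Using $A = O \Lambda U^T$ and the orthogonality of $O$, this equals $\|\Lambda U^T \mathbf{1}_C\|^2 = \sum_i \sigma_i^2 (U^T \mathbf{1}_C)_i^2 \le \lambda^2 \sum_i (U^T \mathbf{1}_C)_i^2 = \lambda^2 \|U^T \mathbf{1}_C\|^2 = \lambda^2 \|\mathbf{1}_C\|^2 = \lambda^2 |C|$, again using orthogonality of $U$. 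This is essentially a one-line computation.

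For part 2, the cleanest route is to deduce it from part 1 together with the Cauchy--Schwarz inequality. I would write $e(B,C) = \sum_{v \in B} d_C(v) = \langle \mathbf{1}_B, A\mathbf{1}_C \rangle$. By Cauchy--Schwarz, $|e(B,C)| \le \|\mathbf{1}_B\| \cdot \|A\mathbf{1}_C\| = \sqrt{|B|}\,\sqrt{\sum_{v \in X} d_C(v)^2}$, and applying part 1 bounds the second factor by $\lambda \sqrt{|C|}$, giving $|e(B,C)| \le \lambda\sqrt{|B||C|}$ as desired. (Alternatively, one can bound $|e(B,C)| = |\langle \mathbf{1}_B, O\Lambda U^T \mathbf{1}_C\rangle|$ directly by expanding $\mathbf{1}_B$ and $\mathbf{1}_C$ in the singular bases and using Cauchy--Schwarz on the resulting bilinear form; this is the standard expander-mixing-lemma computation. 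Either way it reduces to the same estimate.)

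There is no serious obstacle here — the lemma is a routine consequence of the singular value decomposition, and the only care needed is bookkeeping with indicator vectors and making sure the orthogonal transformations are applied on the correct side. The one point worth stating explicitly is that we do not need $A$ to be symmetric or the weights to be nonnegative; the argument uses only that $A$ is a real matrix with all singular values at most $\lambda$. I would present part 1 first since part 2 follows from it immediately via Cauchy--Schwarz, keeping the total length to a few lines.
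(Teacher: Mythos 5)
Your proposal is correct and follows essentially the same route as the paper: part 1 by writing $\sum_{v} d_C(v)^2 = \|A\mathbf{1}_C\|^2$ and bounding it via the singular value decomposition and orthogonality of $O$ and $U$, and part 2 by Cauchy--Schwarz applied to $e(B,C)=\sum_{v\in B} d_C(v)$ together with part 1. No gaps.
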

\begin{proof}
We first prove part 1. Let $\bold{y}$ be the vector with entries corresponding to elements of $Y$, with $1$ at the elements of $C$, and $0$ at the other elements. We claim that
\[\langle A\bold{y},A \bold{y} \rangle \le \lambda^2 \langle \bold{y},\bold{y} \rangle.\]
Indeed, recall that $A=O \Lambda U^T$, and if we let $\bold{y}'=U^T\bold{y}$, then 
\begin{equation}\label{eq:add1}\langle A\bold{y},A \bold{y} \rangle= \langle O \Lambda U^T\bold{y},O \Lambda U^T \bold{y} \rangle={\bold{y}'}^T\Lambda^2\bold{y}' \le \lambda^2 \langle \bold{y}',\bold{y}' \rangle = \lambda^2 \langle \bold{y},\bold{y} \rangle. \end{equation}

The first equality in (\ref{eq:add1}) follows by substituting for $A$, the second by substituting $y'$ and using the fact that $O$ is orthogonal. The inequality follows from the fact that $\Lambda$ is a diagonal matrix containing the singular values, and the last equality follows from the fact that $U$ is orthogonal.

However, we know that 
\[A\bold{y}=(d_C(v),v \in X),\]
and thus  
\begin{equation}\label{eq:add2}\langle A\bold{y},A\bold{y} \rangle =\sum_{v \in X} d_C(v)^2.\end{equation}
However, it is easy to see that \begin{equation}\label{eq:add3}\langle \bold{y},\bold{y} \rangle =|C|.\end{equation} Combining (\ref{eq:add1}), (\ref{eq:add2}), and (\ref{eq:add3}), we obtain
\[ \sum_{v \in X} d_C(v)^2 \le \lambda^2 |C|.\]
This completes the proof of part 1.

For part 2, we use the Cauchy-Schwarz inequality, and the statement of part 1. We can see that 
\[|e(B,C)| =|\sum_{v \in B} d_C(v)| \le \sqrt{|B|} \sqrt{\sum_{v \in B} d_C(v)^2} \le \sqrt{|B|} \sqrt{\sum_{v \in X} d_C(v)^2} \le \lambda \sqrt{|B||C|}.\]
This completes the proof of part 2.
\end{proof}

Suppose we have an $n \times n$ matrix $A$ with singular values $\sigma_1,\sigma_2,\ldots,\sigma_n$. Consider the {\it $k$-blow-up} $A'$ of $A$, which is a $kn \times kn$ matrix where we replace each entry $a_{ij}$ of $A$ with the $k \times k$ matrix $a_{ij}E$, 
where $E$ is the constant matrix with $1$'s 
everywhere. In other words, $A'=A \otimes E$ is the tensor product (also known as the Kronecker product) of $A$ and $E$. Note that $A'$ corresponds to the matrix we obtain by replacing each vertex by $k$ (independent) vertices. We have the following lemma.

\begin{lemma} \label{blowup}
Let $A'$ be the $k$-blow-up of an $n \times n$ matrix $A$, and $\lambda$ and $\lambda'$ be the largest singular values of $A$ and $A'$ respectively. Then $\lambda'=k\lambda$.
\end{lemma}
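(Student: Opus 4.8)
The plan is to use the identification $A'=A\otimes E$ noted in the text, where $E$ is the $k\times k$ all-ones matrix, and to pin down the top singular value of $A'$ by proving the two complementary bounds $\lambda'\ge k\lambda$ and $\lambda'\le k\lambda$ separately.

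For the lower bound, I would take unit vectors $\mathbf{u},\mathbf{v}\in\mathbb{R}^n$ realizing the top singular value of $A$, so that $A\mathbf{u}=\lambda\mathbf{v}$, and set $\mathbf{w}=k^{-1/2}\mathbf{1}\in\mathbb{R}^k$, which is a unit vector. Since $E\mathbf{w}=k^{-1/2}E\mathbf{1}=k^{1/2}\mathbf{1}=k\mathbf{w}$, one computes $A'(\mathbf{u}\otimes\mathbf{w})=(A\mathbf{u})\otimes(E\mathbf{w})=k\lambda\,(\mathbf{v}\otimes\mathbf{w})$, and as $\mathbf{u}\otimes\mathbf{w}$ and $\mathbf{v}\otimes\mathbf{w}$ are unit vectors this gives $\lambda'\ge k\lambda$.

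For the upper bound, I would index the coordinates of $\mathbb{R}^{kn}$ by pairs $(i,a)$ with $i\in[n]$, $a\in[k]$, so that for a unit vector $\mathbf{x}$ we have $(A'\mathbf{x})_{(i,a)}=\sum_j a_{ij}y_j$ where $y_j:=\sum_b x_{(j,b)}$; crucially the right side does not depend on $a$. Hence $\|A'\mathbf{x}\|^2=k\|A\mathbf{y}\|^2\le k\lambda^2\|\mathbf{y}\|^2$, and by the Cauchy–Schwarz inequality $\|\mathbf{y}\|^2=\sum_j\bigl(\sum_b x_{(j,b)}\bigr)^2\le k\sum_{j,b}x_{(j,b)}^2=k$, so $\|A'\mathbf{x}\|^2\le k^2\lambda^2$. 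Combining the two bounds yields $\lambda'=k\lambda$.

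There is no genuine obstacle here: the only thing to watch is keeping the Kronecker-product index bookkeeping straight in the upper-bound step. An alternative, even shorter route would be to quote the standard fact that the singular values of $A\otimes E$ are exactly the products $\sigma_i(A)\sigma_j(E)$, combined with the observation that $E^2=kE$ forces $E$ to have singular values $k$ (once) and $0$ (with multiplicity $k-1$); then the multiset of singular values of $A'$ is $\{k\sigma_i(A)\}\cup\{0,\dots,0\}$, and the claim is immediate.
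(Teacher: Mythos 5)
Your proof is correct, and it takes a genuinely different route from the paper. The paper computes the full singular value decomposition of $A'=A\otimes E$: choosing an orthogonal $O_k$ whose first column is constant $1/\sqrt{k}$, so that $O_k^TEO_k=\diag(k,0,\ldots,0)$, and applying the mixed-product property to $\widetilde{U}=U\otimes O_k$, $\widetilde{V}=V\otimes O_k$, it finds that the singular values of $A'$ are exactly $k\sigma_1,\ldots,k\sigma_n$ together with zeros. You instead pin down only the top singular value via its variational characterization $\lambda'=\max_{\|\mathbf{x}\|=1}\|A'\mathbf{x}\|$: a test vector $\mathbf{u}\otimes k^{-1/2}\mathbf{1}$ for the lower bound, and the observation that $(A'\mathbf{x})_{(i,a)}=(A\mathbf{y})_i$ with $\mathbf{y}$ the vector of block sums, combined with Cauchy--Schwarz, for the upper bound. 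Both arguments are complete; yours is more elementary (no need to construct the diagonalizing orthogonal matrices or invoke the mixed-product identity) but yields less information, determining only $\lambda'$ rather than the whole spectrum --- which is all the paper ever uses. The ``alternative shorter route'' you sketch at the end, via the fact that the singular values of a Kronecker product are the pairwise products of singular values, is essentially the paper's argument in disguise.
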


\begin{proof}

We will show that if the singular values of $A$ are $\sigma_1,\ldots,\sigma_n$, then the singular values of $A'$  are $k \sigma_1,k \sigma_2,\ldots,k 
\sigma_n,0, 0,\ldots$. That is, we have each 
original singular value multiplied by $k$ 
once, and the remaining singular values of $A'$ are zeroes. This clearly implies the lemma. Let $O_k$ be a $k \times k$ 
matrix that is orthogonal, and the first 
column consists of $\frac{1}{\sqrt{k}}$ 
everywhere (such an $O_k$ clearly exists). Then $O_k^TEO_k=\diag(k,0,0,\ldots,)=kE_{11}$, thus,
 the singular values of $E$ are $k,0, \ldots, 0$. If $\Lambda=\diag(\sigma_1,\sigma_2,\ldots,\sigma_n)=U^TAV$,
then set $\widetilde{U}=U \otimes O_k$ and $\widetilde{V}=V \otimes O_k$. As the tensor product of orthogonal matrices is orthogonal, the matrices $\widetilde{U}$ and $\widetilde{V}$ are orthogonal matrices. 

\begin{multline*} \widetilde{U}^T A' \widetilde{V} = (U^T \otimes O_k^T)(A \otimes E)(V \otimes O_k) =(U^TAV)\otimes(O_k^TEO_k)=\Lambda \otimes kE_{11}\\
= \diag(k\sigma_1,0,\ldots,0,k\sigma_2,0,\ldots,0,k\sigma_3,0,\ldots,0,k\sigma_4,\ldots,) .\end{multline*}
The second equality above is by the mixed-product property of the tensor product. Hence, the singular values are as claimed, which completes the proof. 

\end{proof}

\section{Irregularity Between Parts} \label{graphchangesec}

Recall that a part of step $i$ refers to a part of $\mathscr{P}_i$ or $\mathscr{Q}_i$. Next, we establish the fact that while $G$ is quite irregular between any two parts $X$ and $Y$ of step $i-1$ (on opposite sides), almost all of this irregularity comes from $\widetilde{G}_i$. In fact, almost all of the irregularity comes from $G_i$, since the $G_j$ with $j<i$ are constant across $X$ and $Y$. Concretely, it is easy to check that $\irreg_{\widetilde{G}_i}(X,Y)=\frac{\alpha}{4}|X||Y|$, coming from considering the subsets $X_Y^0 \subset X$ and $Y_X^0 \subset Y$. In contrast, Lemma \ref{regular} below easily implies that $\irreg_{G-\widetilde{G}_i}(X,Y) = O(x_i^{-1/4}\alpha |X||Y|)$. The following lemma is key in establishing this useful fact. The reader may ask why, if this is true for $i=1$, we don't just take a four element partition: $\{V_W^0,V_W^1,W_V^0,W_V^1\}$. The reason is that in this case the irregularity is larger than $\epsilon |V||W|$. The bound we get from the lemma will be useful for $i \geq 2$. 

\begin{lemma} 
If $X \in \mathscr{P}_{i-1}$ and $Y \in \mathscr{Q}_{i-1}$ are two parts in step $i-1$, $a,b \in \{0,1\}$, then for any $U \subset X_Y^a$ and $Z \subset Y_X^b$, we have $|e_{G_{i+1}}(U,Z)| \le .8 x_i^{-1/4} \alpha n\sqrt{|U||Z|} \le .8 x_i^{-1/4}\alpha n^2$, where $n=|X_Y^a|=|Y_X^b|$.
\end{lemma}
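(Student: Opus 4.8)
The plan is to bound the largest singular value of $G_{i+1}$ restricted to the block $X_Y^a \times Y_X^b$ and then apply part 2 of Lemma \ref{expmix}. The key observation is that $G_{i+1}$ between $X_Y^a$ and $Y_X^b$ has a recursive block structure: $X_Y^a$ contains $x_i$ parts $B_1,\dots,B_{x_i}$ of $\mathscr{P}_i$, and $Y_X^b$ contains $x_i$ parts $C_1,\dots,C_{x_i}$ of $\mathscr{Q}_i$, and each pair $(B_p, C_q)$ is active, so between $B_p$ and $C_q$ the graph $G_{i+1}$ takes the $\pm\alpha$ "plus/minus" pattern from Figure \ref{figure0}: with respect to the bipartition $B_p = (B_p)_{C_q}^0 \sqcup (B_p)_{C_q}^1$ and the analogous bipartition of $C_q$, it is $+\alpha$ on the "matching" quadrants and $-\alpha$ on the "crossing" quadrants. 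Crucially, this pattern is \emph{constant} on each of the $2x_{i+1} \times 2x_{i+1}$ sub-blocks coming from the parts of $\mathscr{P}_{i+1}$ and $\mathscr{Q}_{i+1}$, so $G_{i+1}$ on this block is a $(2x_{i+1})$-blow-up of a smaller matrix $M$ whose rows are indexed by the $x_i \cdot 2x_{i+1}$ parts of $\mathscr{P}_{i+1}$ inside $X_Y^a$ and whose columns by the parts of $\mathscr{Q}_{i+1}$ inside $Y_X^b$, with entries in $\{+\alpha,-\alpha\}$. By Lemma \ref{blowup}, $\lambda(G_{i+1}|_{X_Y^a \times Y_X^b}) = 2x_{i+1}\cdot\lambda(M)$, and since $|U||Z| \le n^2$, the final chain of inequalities in the statement is immediate once we have the first bound; so everything reduces to showing $\lambda(M)$ is small — roughly $\lambda(M) = O(x_i^{-1/4}\alpha \cdot n/(2x_{i+1}))$, i.e. $\lambda(M) = O(\alpha \sqrt{x_i \cdot x_{i+1}} \cdot x_i^{-1/4})$ after accounting for the dimensions.

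Next I would estimate $\lambda(M)^2 = \|M^TM\|_{\mathrm{op}} = \|MM^T\|_{\mathrm{op}}$ by analyzing the Gram matrix $MM^T$, which is $\alpha^2$ times a matrix of inner products of $\pm 1$ rows of $M/\alpha$. Each row of $M$ corresponds to a part $D$ of $\mathscr{P}_{i+1}$ inside some $B_p$; its sign pattern across $C_q$ depends only on which side $D$ falls on in the bipartition $B_p = (B_p)_{C_q}^0 \sqcup (B_p)_{C_q}^1$. The diagonal entries of $MM^T/\alpha^2$ equal the number of columns, $2x_i x_{i+1}$. For two rows $D, D'$ inside the \emph{same} part $B_p$: over the $2x_{i+1}$ columns inside a fixed $C_q$, the contribution is $+2x_{i+1}$ if $C_q$ does not separate $D$ and $D'$ and $-2x_{i+1}$ if it does; summing over the $x_i$ columns-blocks $C_q$ (the other $C$'s contribute $0$ since... wait, every $C_q \subset Y_X^b$ contributes), this gives $2x_{i+1}\cdot y(D,D')$ where $y$ is exactly the quantity controlled by Property 2, which we fixed to satisfy $|y(D,D')| \le t = x_i/2$. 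For two rows in \emph{different} parts $B_p \neq B_{p'}$, I would use that the bipartitions induced on $B_p$ and $B_{p'}$ by the various $C_q$ are (by Property 1, via the quantities $z_B(C,C')$) close to "pairwise independent," so the inner product is small — here I would quantify using Property 1 to show the cross terms are $O(r \cdot x_{i+1})$ or so per pair. Thus $MM^T/\alpha^2 = 2x_i x_{i+1}\cdot I + E$ where $E$ has entries bounded by $O(x_{i+1} \cdot x_i/2)$ on same-block off-diagonals and smaller on cross-block entries.

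To turn the entrywise bound on $E$ into an operator-norm bound I would bound $\|E\|_{\mathrm{op}} \le \|E\|_{\mathrm{op}}$ via $\|E\|_{\mathrm{op}}^2 \le \|E\|_1 \|E\|_\infty = \|E\|_\infty^2$ (by symmetry), i.e. the max absolute row sum. A row of $E$ has $2x_{i+1}$ same-block entries each $O(x_{i+1} x_i)$ and $2x_i x_{i+1}$ cross-block entries each $O(r x_{i+1}) = O(x_{i+1}\sqrt{x_{i+1}\ln x_i})$; one checks the same-block part dominates or is comparable, giving $\|E\|_\infty = O(x_{i+1}^2 x_i)$ roughly, hence $\lambda(M)^2 = \alpha^2(2x_i x_{i+1} + O(x_{i+1}^2 x_i)) = O(\alpha^2 x_i x_{i+1}^2)$... but this is too weak — it only gives $\lambda(M) = O(\alpha\sqrt{x_i}\,x_{i+1})$, whereas I need the extra $x_i^{-1/4}$ saving, so the naive row-sum bound must be sharpened. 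The \textbf{main obstacle}, then, is getting the operator norm bound tight enough: I expect one must \emph{not} bound $E$ crudely but instead observe that $E$ itself has block structure — the same-block blocks are $x_{i+1}\times x_{i+1}$ (well, $2x_{i+1}\times 2x_{i+1}$) matrices of the form "$y$-pattern" which, being built from $x_i$ near-independent bipartitions, have operator norm $O(x_{i+1}\sqrt{x_i})$ (a fresh application of concentration / the $t = x_i/2$ bound interpreted spectrally, perhaps re-deriving it via a second-moment or trace estimate on $E$ rather than a worst-case row sum), and the cross-block part has even smaller norm by Property 1. Combining, $\lambda(M)^2 = O(\alpha^2 x_i x_{i+1} + \alpha^2 x_{i+1}\sqrt{x_i}\cdot x_{i+1})= O(\alpha^2 x_{i+1}^2\sqrt{x_i})$, so $\lambda(M) = O(\alpha x_{i+1} x_i^{1/4})$ — still not the claimed $x_i^{-1/4}$. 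I suspect the resolution is that the relevant comparison is $\lambda(M)$ versus the trivial blow-up norm $2x_{i+1}\cdot(\text{something of size }\alpha n)$ where $n = 2x_i x_{i+1}$ (the number of $\mathscr{P}_{i+1}$-parts in $X_Y^a$ is $2x_i x_{i+1}$, matching $n = |X_Y^a|/|\text{part}|$ scaled); writing the target as $.8 x_i^{-1/4}\alpha n$ with $n$ the \emph{sub-block side} $= 2x_i x_{i+1}\cdot(\text{vertices per }\mathscr{P}_{i+1}\text{-part})$, the blow-up factor absorbs most of the size and one genuinely needs $\lambda(M) \le .4 x_i^{-1/4}\alpha\cdot(2 x_i x_{i+1})$, i.e. $\lambda(M) = O(\alpha x_i^{3/4} x_{i+1})$, which \emph{is} implied by the $O(\alpha x_{i+1}\sqrt{x_i})$ bound above since $\sqrt{x_i} \le x_i^{3/4}$. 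So the plan is sound provided I track the dimension bookkeeping carefully and establish the $O(\alpha x_{i+1}\sqrt{x_i})$ spectral bound on the Gram matrix via a trace/second-moment estimate using Properties 1 and 2; the arithmetic of matching this against $.8 x_i^{-1/4}\alpha n^2$ with the correct value of $n$ is the delicate but routine part, and the genuine content is the spectral estimate on $E$.
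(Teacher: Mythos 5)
Your skeleton matches the paper's proof exactly: reduce to bounding the top singular value of $G_{i+1}$ on the block $X_Y^a\times Y_X^b$ via part 2 of Lemma \ref{expmix}, pass to the part-level $m\times m$ matrix $A$ (with $m=2x_ix_{i+1}$) via Lemma \ref{blowup}, and control $\lambda(A)$ through the Gram matrix $M=AA^T$, whose diagonal is $m\alpha^2$, whose same-block off-diagonal entries are $2x_{i+1}y(u,v)\alpha^2$ (Property 2, so at most $x_ix_{i+1}\alpha^2$), and whose cross-block entries are controlled by Property 1. You also correctly work out, at the end, that the target at the part level is $\lambda(A)=O(\alpha x_i^{3/4}x_{i+1})$, not $O(\alpha x_i^{-1/4}x_{i+1})$ --- that bookkeeping is right and is why a bound of the form $\lambda(A)^2=O(\alpha^2x_ix_{i+1}^2)$ suffices. (Minor omission: the lemma does not assume the relevant pairs are active, so you need the paper's two easy preliminary cases where all weights in the block are zero.)

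The genuine gap is in the decisive step, converting the entry-wise bounds on $M$ into the bound on $\lambda$. Three issues. First, your cross-block entry bound is wrong: summing $2z_{C}(B,B')\alpha^2$ over the $x_i$ parts $C$ of $\mathscr{Q}_i$ in $Y_X^b$ gives $|M_{uv}|\le 2x_ir\alpha^2$, not $O(rx_{i+1})\alpha^2$; with your stated bound the cross-block contribution to a row sum is $O(x_ix_{i+1}^2r)=O(x_ix_{i+1}^{5/2}\sqrt{\ln x_i})$, which dwarfs the target $O(x_i^{3/2}x_{i+1}^2)$ and sinks the row-sum route, whereas with the correct bound that contribution is negligible and the row-sum bound $\lambda^2\le\|M\|_\infty\le m\alpha^2+2x_ix_{i+1}^2\alpha^2+4x_i^2x_{i+1}r\alpha^2=O(\alpha^2x_ix_{i+1}^2)$ in fact suffices --- so you dismissed a working argument. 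Second, the ``sharpening'' you propose instead cannot work: the operator norm of the same-block $2x_{i+1}\times 2x_{i+1}$ blocks of $E$ is at least their largest entry, which Property 2 only caps at $2x_{i+1}t\,\alpha^2=x_ix_{i+1}\alpha^2\gg x_{i+1}\sqrt{x_i}\,\alpha^2$ (and indeed typical values of $y$ are of order $x_i$, not $\sqrt{x_i}$, since the union bound is over $x_{i+1}^2=2^{x_i/8}$ pairs), so the claimed $O(x_{i+1}\sqrt{x_i})$ block norm is not obtainable from the stated properties. Third, the ``trace/second-moment estimate'' you defer to at the end is exactly the paper's device, and it is cleaner than either of your routes: one uses $\lambda^4\le\tr(M^2)=\sum_{u,v}M_{uv}^2$, and the count of entries of each type ($m$ diagonal, $O(x_ix_{i+1}^2)$ same-block, $O(x_i^2x_{i+1}^2)$ cross-block) immediately gives $\tr(M^2)\le m^4\alpha^4\bigl(\tfrac1m+\tfrac1{4x_i}+\tfrac{6\ln x_i}{x_{i+1}}\bigr)\le(.8\alpha m)^4/x_i$. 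As written, your proposal does not complete this step, and the one concrete computation you do carry through rests on the erroneous cross-block bound.
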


\begin{proof}

There are three cases to consider. The first case is that the pair $X,Y$ is already inactive. In this case, all the weights in $G_{i+1}$ between $X$ and $Y$ are zero (recall that if a pair of parts becomes inactive, then in later steps any pair of parts coming from this pair is also inactive). The second case is that the pair $X,Y$ is active, but after adding the weights in $G_i$, any edge between $X_Y^a$ and $Y_X^b$ will have weight $0$ or $1$. In this case, for any two parts $X' \subset X_Y^a$ and $Y' \subset Y_X^b$ of step $i+1$, the pair $X',Y'$ will be inactive, thus, in $G_{i+1}$, all edges between them will again have weight zero.

In the remaining case, any pair of parts of step $i+1$ coming from $X_Y^a$ and $Y_X^b$ is active.

Let $A'$ be the $n \times n$ matrix corresponding to the weighted graph $G_{i+1}$ restricted to $X_Y^a$ and $Y_X^b$. We will show that  the largest singular value $\lambda'$ of $A'$ is at most $.8 x_i^{-1/4} \alpha n$. This implies the desired result by Lemma \ref{expmix}. 

Recall that the edge weights in $G_{i+1}$ are equal between each part in $\mathscr{P}_{i+1}$ and each part in $\mathscr{Q}_{i+1}$. Let $H_{i+1}$ be the bipartite graph with vertex sets  $\mathscr{P}_{i+1}$ and $\mathscr{Q}_{i+1}$, so each part of step $i+1$ is a vertex, and each edge in $H_{i+1}$ has weight equal to the density in $G_{i+1}$ across that pair of parts. Thus, $G_{i+1}$ is a blow-up of $H_{i+1}$. Let $\widetilde{X}$ and $\widetilde{Y}$ be the sets of vertices of $H_{i+1}$ that consists of those parts of step $i+1$ that come from $X_Y^a$ and $Y_X^b$, respectively. Let $A$ be the matrix corresponding to the weighted graph $H_{i+1}$, restricted to $\widetilde{X}$ and $\widetilde{Y}$, and $\lambda$ be the largest singular value of $A$. Then, $A$ is an $m \times m$ matrix, where $m=2x_ix_{i+1}$, the number of vertices in each of $\widetilde{X}$ and $\widetilde{Y}$. By Lemma \ref{blowup}, we have $\lambda'=\lambda\frac{n}{m}$, thus, it suffices to show that $\lambda \leq .8 x_i^{-1/4} \alpha m$.

Consider the matrix $M=AA^T$. Note that \[\tr  M^2= \tr(AA^TAA^T) = \sum_{i=1}^m \sigma_i^4 \geq \max \sigma_i^4 = \lambda^4.\]
To complete the lemma, it thus suffices to show that $\tr M^2 \leq (.8\alpha m)^4/x_i$.

The rows and columns of $M$ both correspond to the parts of $\mathscr{P}_{i+1}$ that are subsets of $X_Y^a$. Any entry of $M$ in the diagonal will be $m\alpha^2$, but the other entries will be much smaller.

First, consider an element off the diagonal, corresponding to $v$ and $u$ in $\widetilde{X}$ that are in the same part $B$ of $\mathscr{P}_{i}$. Then for any vertex $w$ in any part that does not separate them,  $A_{vw}=A_{uw}=\pm \alpha$, and for a vertex in a part that does separate them, $A_{vw}=-A_{uw}=\pm \alpha$. Thus, 
\[M_{vu}=\sum_{w \in \widetilde{Y}} A_{vw}A_{uw}=y(v,u)2x_{i+1}\alpha^2,\]
where $y(u,v)=y_B(u,v)$ is as defined at the end of Section \ref{construction}, which is the difference between the number of parts that do not separate them and the number of parts that do separate them. By construction, $y(u,v) \leq t= x_i/2$, and hence \[|M_{vu}| \le 2tx_{i+1}\alpha^2.\]

Now, if $v$ and $u$ come from two different parts $B$ and $B'$ of $\mathscr{P}_{i}$, then for any part $C \subset Y_X^b$ of $\mathscr{Q}_i$, it is divided ``almost evenly'' by $B$ and $B'$, that is, about a quarter of the vertices in $C$ have an edge with $+\alpha$ going to both $v$ and $u$, a quarter with $+\alpha$ to $v$, $-\alpha$ to $u$, a quarter with $-\alpha$ to $v$, $+\alpha$ to $u$, and a quarter with $-\alpha$ to both. The difference is measured by $z_C(B,B')$ as defined in Section \ref{construction}. To be precise,
\[M_{vu}=\sum_{w \in \widetilde{Y}} A_{vw}A_{uw}=\sum_{C \in \mathscr{Q}_{i+1} \subset Y_X^b} 2z_C(B,B')\alpha^2.\]

Thus, 
\[|M_{vu}| \le \sum_{C \in \mathscr{Q}_{i+1} \subset Y_X^b} |2z_C(B,B')|\alpha^2 \le 2x_i r \alpha^2,\]
where we recall that $r=\sqrt{6x_{i+1}\ln x_i}$.

There are $m=2x_ix_{i+1}$ diagonal entries $M_{vv}$ in $M$, $x_i(2x_{i+1})(2x_{i+1}-1)$ entries $M_{vu}$ in the same part of $\mathscr{P}_i$ with $u \not = v$, and $x_i(x_i-1)(2x_{i+1})^2$ entries $M_{vu}$ with $u$ and $v$ in different parts of $\mathscr{P}_i$. Hence, 
\begin{align*} \tr M^2 & = \sum_{v,u \in X_Y^a} M_{vu}^2 \le m (m \alpha^2 )^2+x_i\left(2x_{i+1}(2x_{i+1}-1)\right)4t^2x_{i+1}^2\alpha^4 + x_i(x_i-1)(2x_{i+1})^24x_i^2 r^2 \alpha^4 \\ 
& \le  m^3\alpha^4+4x_i^3x_{i+1}^4 \alpha^4+ 96x_i^4 x_{i+1}^3 \ln x_i \alpha^4 = m^4\alpha^4\left(\frac{1}{m}+ \frac{1}{4x_i}+\frac{6 \ln x_i}{x_{i+1}}\right)  \le \frac{(.8\alpha m)^4}{x_i}. 
\end{align*}

The last inequality is true because, if we recall that $m=2x_ix_{i+1}$, $x_{i+1}=2^{x_i/16}$, we have
\[\frac{1}{2x_{i+1}}+\frac{1}{4}+\frac{6x_i \ln x_i}{x_{i+1}} \le .4 \le (.8)^4. \]
(The expression on the left is maximized if $i=1$, in which case $x_i = x_1 = 2^{10}$, $x_{i+1} =x_2= 2^{2^6}$.)

This estimate completes the proof.

\end{proof}

Note that if $j \ge i$, we get the following corollary of the previous lemma, since we can divide $X$ and $Y$ into the parts of $\mathscr{P}_{j-1}$ and $\mathscr{Q}_{j-1}$ that are contained in them, and sum over all pairs. 

\begin{corollary} 
Let $X \in \mathscr{P}_{i-1},Y \in \mathscr{Q}_{i-1}$, and $a,b \in \{0,1\}$. If $U \subset X_Y^a$, $Z \subset Y_X^b$, and $j \geq i$, then  $|e_{G_{j+1}}(U,Z)| \le  .8 x_j^{-1/4}\alpha |X_Y^a||Y_X^b|$.
\end{corollary}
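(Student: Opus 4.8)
The plan is to deduce this corollary directly from the preceding lemma by a simple decomposition-and-summation argument, since the lemma already gives the bound for the case $j = i$ (it controls $e_{G_{i+1}}$ between subsets of $X_Y^a$ and $Y_X^b$ where $X \in \mathscr{P}_{i-1}$, $Y \in \mathscr{Q}_{i-1}$), and we want to upgrade it to $e_{G_{j+1}}$ for arbitrary $j \geq i$. First I would fix $X \in \mathscr{P}_{i-1}$, $Y \in \mathscr{Q}_{i-1}$, $a,b \in \{0,1\}$, $U \subset X_Y^a$, $Z \subset Y_X^b$, and $j \geq i$. Since $\mathscr{P}_{j-1}$ refines $\mathscr{P}_i$ (which refines $\mathscr{P}_{i-1}$), the set $X_Y^a$ is a disjoint union of the parts $X' \in \mathscr{P}_{j-1}$ contained in it; similarly $Y_X^b$ is the disjoint union of the parts $Y' \in \mathscr{Q}_{j-1}$ contained in it. Writing $U = \bigsqcup_{X'} (U \cap X')$ and $Z = \bigsqcup_{Y'} (Z \cap Y')$, bilinearity of $e_{G_{j+1}}(\cdot,\cdot)$ gives
\[
e_{G_{j+1}}(U,Z) = \sum_{X' \subset X_Y^a}\ \sum_{Y' \subset Y_X^b} e_{G_{j+1}}(U \cap X', Z \cap Y'),
\]
where the sums range over $X' \in \mathscr{P}_{j-1}$, $Y' \in \mathscr{Q}_{j-1}$.

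The key point is that each term on the right is exactly of the form handled by the previous lemma applied at step $j$ in place of step $i$. Indeed, $X' \in \mathscr{P}_{j-1}$ and $Y' \in \mathscr{Q}_{j-1}$ are a pair of parts in step $j-1$ on opposite sides; each such $X'$ has its own bipartition $X' = (X')_{Y'}^0 \sqcup (X')_{Y'}^1$ into the groups used to build $G_j$, and since $U \cap X' \subset X' \subset X_Y^a$ and likewise for $Z \cap Y'$, one can bound $e_{G_{j+1}}(U \cap X', Z \cap Y')$ by choosing the appropriate $a',b' \in \{0,1\}$ so that the sets lie inside $(X')_{Y'}^{a'}$ and $(Y')_{X'}^{b'}$ — or, more simply, split $U \cap X'$ and $Z \cap Y'$ into their intersections with the two halves, apply the lemma to each of the (at most four) pieces, and sum. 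Either way, the lemma yields $|e_{G_{j+1}}(U \cap X', Z \cap Y')| \le .8\, x_j^{-1/4} \alpha\, |X'|\,|Y'|$ (absorbing the at-most-four pieces into the fact that the lemma's bound scales with $|X'|\,|Y'|$ and the pieces partition $X'$, $Y'$; one has to be mildly careful here so that the constant $.8$ is not blown up, but the lemma's first inequality $|e_{G_{j+1}}(U,Z)| \le .8 x_j^{-1/4}\alpha n \sqrt{|U||Z|}$ with $n$ the common size already has the flexibility needed).

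Summing over all $X' \subset X_Y^a$ and $Y' \subset Y_X^b$, and using $\sum_{X'} |X'| = |X_Y^a|$ and $\sum_{Y'} |Y'| = |Y_X^b|$, we get
\[
|e_{G_{j+1}}(U,Z)| \le \sum_{X'}\sum_{Y'} .8\, x_j^{-1/4}\alpha\, |X'|\,|Y'| = .8\, x_j^{-1/4}\alpha\, |X_Y^a|\,|Y_X^b|,
\]
which is the claimed bound. The only mild obstacle I anticipate is the bookkeeping around applying the lemma to sets that do not a priori lie in a single half $(X')_{Y'}^{a'}$: one wants the triangle-inequality split into four pieces not to introduce a factor of $4$. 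This is handled by invoking the lemma in its sharper form with the $\sqrt{|U||Z|}$ factor rather than the cruder $n^2$ form, since for the four pieces $P_1,P_2$ of $U\cap X'$ and $Q_1,Q_2$ of $Z \cap Y'$ one has $\sum_{s,t}\sqrt{|P_s||Q_t|} = (\sqrt{|P_1|}+\sqrt{|P_2|})(\sqrt{|Q_1|}+\sqrt{|Q_2|}) \le 2\sqrt{|U\cap X'|}\cdot 2\sqrt{|Z\cap Y'|}$, which is still not quite clean; the cleanest route, and presumably the intended one, is simply to observe that $U \cap X'$, being an arbitrary subset of $X'$, meets each half $(X')_{Y'}^{a'}$ in some subset, and to note that the bound $|e_{G_{j+1}}(\cdot,\cdot)| \le .8 x_j^{-1/4}\alpha\,|X'|\,|Y'|$ for the whole pair $X',Y'$ follows from the lemma applied to $U\cap X' = X'$, $Z \cap Y' = Y'$ directly once we take $a',b'$ and re-split — so in the write-up I would phrase the summation at the level of full parts $X' \in \mathscr{P}_{j-1}$, $Y' \in \mathscr{Q}_{j-1}$ with the lemma's $n^2$-form, giving $|e_{G_{j+1}}(U\cap X', Z\cap Y')| \le |e_{G_{j+1}}(X',Y')|$ is false in general, so instead keep $U\cap X'$ and $Z\cap Y'$ and use the $\sqrt{|U\cap X'||Z\cap Y'|} \le$ (appropriate bound) form. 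In short, the proof is a one-line reduction via bilinearity to the previous lemma; the write-up just needs to state the decomposition and the summation cleanly.
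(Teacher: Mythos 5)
Your proposal is correct and is exactly the paper's argument: the paper proves this corollary precisely by dividing $X_Y^a$ and $Y_X^b$ into the parts of $\mathscr{P}_{j-1}$ and $\mathscr{Q}_{j-1}$ contained in them and summing the lemma's bound over all pairs. The four-quadrant bookkeeping you worry about at the end is a non-issue, because the $n$ in the lemma is the size of a \emph{half} $(X')_{Y'}^{a'}$, namely $|X'|/2$, so the four pieces for a fixed pair $X',Y'$ contribute at most $4\cdot .8\,x_j^{-1/4}\alpha\,(|X'|/2)(|Y'|/2)=.8\,x_j^{-1/4}\alpha\,|X'|\,|Y'|$ with no lost factor, and summing over $X',Y'$ gives exactly $.8\,x_j^{-1/4}\alpha\,|X_Y^a|\,|Y_X^b|$.
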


Now, from this, it is easy to obtain the following lemma, which is the main result in this section, and an important tool in the proof of the main result. 

\begin{lemma} \label{regular}
Let $X \in \mathscr{P}_{i-1}$, $Y \in \mathscr{Q}_{i-1}$, and $a,b \in \{0,1\}$. If $U \subset X_Y^a$ and $Z \subset Y_X^b$, then 
\[|e_{G-\widetilde{G}_i}(U,Z)| \le .9 x_i^{-1/4} \alpha |X_Y^a||Y_X^b|.\]
\end{lemma}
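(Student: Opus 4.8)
The plan is to bound $|e_{G-\widetilde{G}_i}(U,Z)|$ by summing the contributions from each $G_{j+1}$ with $j \geq i$, using the previous corollary term by term. Recall that $G - \widetilde{G}_i = G_{i+1} + G_{i+2} + \cdots + G_s$, so by the triangle inequality $|e_{G-\widetilde{G}_i}(U,Z)| \leq \sum_{j=i}^{s-1} |e_{G_{j+1}}(U,Z)|$. For each $j$ in this range, $X \in \mathscr{P}_{i-1} \subseteq \mathscr{P}_{j-1}$ is not quite applicable directly — but $X_Y^a$ is a union of parts of $\mathscr{P}_i$, hence of parts of $\mathscr{P}_{j-1}$, so we can split $U$ and $Z$ along the parts of $\mathscr{P}_{j-1}$ and $\mathscr{Q}_{j-1}$ contained in $X_Y^a$ and $Y_X^b$ and apply the corollary to each piece, then sum; this is exactly the content of the corollary as stated. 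Thus $|e_{G_{j+1}}(U,Z)| \leq .8 x_j^{-1/4} \alpha |X_Y^a||Y_X^b|$ for each $j \geq i$.

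The main step is then the geometric-type estimate $\sum_{j \geq i} x_j^{-1/4} \leq \frac{9}{8} x_i^{-1/4}$, which upgrades the constant from $.8$ to $.9$. First I would note that $x_{j+1} = 2^{x_j/16}$ grows so fast that $x_{j+1}^{-1/4}$ is utterly negligible compared to $x_j^{-1/4}$: since $x_j \geq x_1 = 2^{10}$ for all $j \geq 1$, we have $x_{j+1} = 2^{x_j/16} \geq 2^{x_j}$ (crudely), so $x_{j+1}^{-1/4} \leq 2^{-x_j/4} x_j^{-1/4} \cdot (\text{something tiny})$ — more carefully, $x_{j+1}^{1/4} = 2^{x_j/64} \geq 2^{x_j/64}$ which is astronomically larger than $x_j^{1/4}$, so the ratio $x_{j+1}^{-1/4}/x_j^{-1/4} = x_j^{1/4}/x_{j+1}^{1/4}$ is far smaller than, say, $1/8$. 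Hence the tail sum $\sum_{j \geq i} x_j^{-1/4}$ is dominated by its first term $x_i^{-1/4}$ times a convergent factor comfortably below $9/8$. Then $|e_{G-\widetilde{G}_i}(U,Z)| \leq .8 \alpha |X_Y^a||Y_X^b| \sum_{j\geq i} x_j^{-1/4} \leq .8 \cdot \frac{9}{8} x_i^{-1/4} \alpha |X_Y^a||Y_X^b| = .9 x_i^{-1/4}\alpha|X_Y^a||Y_X^b|$.

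I do not expect any serious obstacle here; the only thing to be slightly careful about is verifying the summation bound cleanly. One clean way: show by induction that $x_{j+1} \geq x_j^{16}$ for $j \geq 1$ (since $2^{x_j/16} \geq x_j^{16}$ is equivalent to $x_j/16 \geq 16\log_2 x_j$, i.e. $x_j \geq 256 \log_2 x_j$, which holds for $x_j \geq 2^{10}$), so $x_j^{-1/4} \leq x_1^{-(16^{j-1})/4}$ and the sum is dominated by a rapidly decaying geometric-like series; even the crude bound $\sum_{j \geq i} x_j^{-1/4} \leq x_i^{-1/4}(1 + x_i^{-1/4} + x_i^{-1/2} + \cdots) \leq x_i^{-1/4} \cdot \frac{1}{1 - x_1^{-1/4}} \leq x_i^{-1/4} \cdot \frac{1}{1 - 2^{-10/4}} < \frac{9}{8} x_i^{-1/4}$ suffices, using that $x_{j+1}^{-1/4} \leq x_j^{-1/4} \cdot x_i^{-1/4}$ for $j \geq i$ (which follows from $x_{j+1} \geq x_j \cdot x_i$). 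This last inequality $x_{j+1} \geq x_j x_i$ is immediate since $x_{j+1} = 2^{x_j/16} \geq x_j \cdot x_j \geq x_j x_i$ for $j \geq i \geq 1$ using $x_j \geq 2^{10} \geq x_j$ — wait, I just need $2^{x_j/16} \geq x_j x_i$, and since $x_j, x_i \leq 2^{x_j/32}$ trivially when $x_j \geq 2^{10}$ (as $x_i \leq x_j$ and $x_j \leq 2^{x_j/32}$ holds for $x_j \geq 2^{10}$), we get $x_j x_i \leq 2^{x_j/16} = x_{j+1}$. So I would present the argument in this order: triangle inequality to split into the $G_{j+1}$ terms, invoke the corollary for each, then close with the elementary summation bound.
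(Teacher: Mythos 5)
Your proof follows the paper's argument exactly: triangle inequality over $j=i,\dots,s-1$, the corollary applied to each $G_{j+1}$, and then the tail-sum estimate $\sum_{j\ge i}x_j^{-1/4}\le \tfrac98 x_i^{-1/4}$ (the paper simply asserts this last step, remarking that the $x_j$ grow very rapidly). One small numerical slip: your "crude" closing bound is false as written, since $\frac{1}{1-2^{-10/4}}\approx 1.21>\tfrac98$; the ratio $x_i^{-1/4}$ of consecutive terms is not small enough for that geometric series to close the gap. The fix is immediate with a sharper ratio: $x_{j+1}^{-1/4}/x_j^{-1/4}=(x_j 2^{-x_j/16})^{1/4}\le 2^{(10-64)/4}=2^{-13.5}$ for $x_j\ge 2^{10}$, so the tail sum is at most $x_i^{-1/4}/(1-2^{-13.5})$, comfortably below $\tfrac98 x_i^{-1/4}$.
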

\begin{proof}
Using the triangle inequality, and applying the previous corollary for each $j$ from $i$ to $s-1$, we have
\[|e_{G-\widetilde{G}_i}(U,Z)|=\left|\sum_{j=i}^{s-1} e_{G_{j+1}}(U,Z)\right| \le \sum_{j=i}^{s-1} \left|e_{G_{j+1}}(U,Z)\right| \le \sum_{j=i}^{s-1} .8 x_j^{-1/4} \alpha n^2 \le .9 x_i^{-1/4} \alpha |X_Y^a||Y_X^b|.
\]

Recall that $x_{j+1}=2^{x_j/16}$ and $x_1=2^{10}$, thus the sequence $(x_j)_{j \geq i}$ grows very rapidly, which justifies the last inequality.
\end{proof}

\section{Proof of the Lower Bound} \label{secmain} 

The goal of this section is to prove Theorem \ref{main}, by first proving Theorem \ref{maingen}. We therefore suppose we have a partition of the vertex set of $G$. We will show that if this partition is far from being a refinement of $\mathscr{P}_s \cup \mathscr{Q}_s$, then it has large irregularity. We first assume that this partition is a refinement of the bipartition, that is, we have two partitions, $\mathscr{S}$ of $V$ and $\mathscr{T}$ of $W$. In fact, we will further assume for now that $\mathscr{S}$ is a refinement of $\mathscr{P}_1$, and $\mathscr{T}$ is a refinement of $\mathscr{Q}_1$. We will later show (see Lemma \ref{divide}) how to get rid of these assumptions. Our goal for now is to prove the following lemma. 

\begin{lemma}\label{withassumption}
Assume $\mathscr{S}$ is a refinement of $\mathscr{P}_1$ and $\mathscr{T}$ is a refinement of $\mathscr{Q}_1$. If the vertex partition $\mathscr{S} \cup \mathscr{T}$ has irregularity at most $\frac{1}{5000}\alpha|V||W|$, then at least a $.97$ proportion of the vertices in $V$ are in parts in $\mathscr{S}$ such that more than half of its vertices are in the same part of $\mathscr{P}_s$, and at least a $.97$ proportion of the vertices in $W$ are in parts in $\mathscr{T}$ such that more than half of its vertices are in the same part of $\mathscr{Q}_s$. It follows that the number of parts in $\mathscr{S} \cup \mathscr{T}$ is at least $.97(|\mathscr{P}_s|+|\mathscr{Q}_s|)$. 
\end{lemma}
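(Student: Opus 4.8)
The plan is to prove Lemma~\ref{withassumption} by tracking, step by step, how a part $S \in \mathscr{S}$ fails to be contained in a single part of $\mathscr{P}_s$, and charging the associated irregularity. Fix $S \in \mathscr{S}$ with $S \subset V_W^a$ for some part of $\mathscr{P}_1$ (this is where the refinement assumption is used). For each step $i$ from $1$ to $s$, $S$ intersects some collection of parts of $\mathscr{P}_i$; since $\mathscr{P}_{i+1}$ refines $\mathscr{P}_i$ by splitting each part via the random bipartitions $B = B_C^0 \cup B_C^1$, we can measure the ``splitting at step $i$'' of $S$ inside a part $B \in \mathscr{P}_{i-1}$ as how unevenly $S \cap B$ is divided into $(S \cap B)_C^0$ and $(S \cap B)_C^1$, averaged over the relevant parts $C$ of $\mathscr{Q}_i$. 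The key point is that the edge weights of $G_{i+1}$ across an active pair are $\pm\alpha$ exactly according to the bipartition $B_C^0, B_C^1$ versus $C_B^0, C_B^1$, so a substantial split of $S$ at step $i$ forces a correspondingly large value of $e_{G_{i+1}}(S, C_B^a)$ for a suitably chosen subset $C_B^a$ of $C$. By Lemma~\ref{regular}, the contribution of $G - \widetilde G_{i+1}$ to such an edge count is negligible ($O(x_{i+1}^{-1/4}\alpha \cdot (\text{sizes}))$), and the contributions of $\widetilde G_i = G_0 + \cdots + G_i$ are constant across the relevant pairs and hence can be subtracted off cleanly. Therefore this splitting translates, up to small error, into genuine irregularity of the pair $(S, C)$ in $G$ itself.

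Next I would set up the bookkeeping that aggregates these per-step contributions. For each $S$, define its ``total splitting'' $\Phi(S) = \sum_{i=1}^{s}(\text{amount } S \text{ splits at step } i)$, suitably normalized so that $\Phi(S)$ is essentially the measure of vertices of $S$ that end up outside a plurality part of $\mathscr{P}_s$ (if $S$ lands mostly in one part of $\mathscr{P}_s$, then it barely split at any step, so $\Phi(S)$ is small; conversely a large $\Phi(S)$ means lots of splitting somewhere). The heart of the argument is a lower bound of the form $\irreg_G(\mathscr{S} \cup \mathscr{T}) \gtrsim \alpha \sum_{S}|S| \cdot \Phi(S) \cdot (\text{something})$, obtained by summing the per-step, per-$(S,C)$ irregularity bounds from the previous paragraph. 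Here is where the averaging becomes essential: irregularity of a fixed pair $(S,C)$ at step $i$ can be small (the random bipartition might happen to split $S$ evenly for that particular $C$), but properties (1) and (2) fixed in Section~\ref{construction} — the bound $|z_B(C,C')| \le r$ and $|y_B(u,v)| \le t$ — guarantee that the different bipartitions coming from different $C$ are ``spread out'' and nearly independent, so that averaging over the $\approx x_i$ parts $C \subset Y_X^b$ recovers a robust lower bound proportional to how much $S$ actually splits. The active-pair density from Lemma~\ref{inactive} (at most $.05$ of pairs inactive) ensures we lose only a small constant factor by restricting to active pairs, and the factor $32x_1^2$ / $4x_1$ tradeoff from reducing to a refinement of $\mathscr{P}_1 \cup \mathscr{Q}_1$ is absorbed into the constant $\tfrac{1}{5000}$.

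Putting the pieces together: if $\irreg_G(\mathscr{S} \cup \mathscr{T}) \le \tfrac{1}{5000}\alpha|V||W|$, then $\sum_S |S|\,\Phi(S)$ must be small — explicitly, at most a $.03$ fraction of $|V|$ — which by the definition of $\Phi$ means that a $.97$ fraction of the vertices of $V$ lie in parts $S$ that are mostly ($>$ half) contained in a single part of $\mathscr{P}_s$; the same runs symmetrically for $\mathscr{T}$ and $\mathscr{Q}_s$. The final counting conclusion is then immediate: each part of $\mathscr{P}_s$ can be the plurality part for parts $S$ whose total size is at most $|V|/|\mathscr{P}_s| \cdot (\text{const})$... more carefully, since a part $V_\ell$ of $\mathscr{P}_s$ has size $|V|/|\mathscr{P}_s|$ and more than half of each such $S$ sits in its plurality part, the parts $S$ sharing a given plurality part have total size less than $2|V|/|\mathscr{P}_s|$, forcing at least $\tfrac{.97}{2}|\mathscr{P}_s|$... but the lemma claims $.97|\mathscr{P}_s|$, so the correct accounting must instead observe that distinct $S$'s with the same plurality part are disjoint and each contributes a distinct part, and a volume argument on $|V| = \sum|S|$ gives the count — I would nail down this last step carefully, as off-by-constant errors here are the easiest mistake. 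The main obstacle throughout is the second paragraph's averaging: making precise the claim that properties (1) and (2) let the splitting of $S$ at step $i$, summed over $C$, lower-bound $\sum_C \irreg_G(S,C)$ without the different $C$'s cancellations undoing the bound — this requires the expander-mixing estimates of Section~\ref{sing} applied to the ``discrepancy matrix'' recording how $S$ meets the $B_C^a$, and it is the one place where a genuine computation (rather than bookkeeping) is unavoidable.
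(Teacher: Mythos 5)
Your high-level plan (charge the irregularity to the amount each part $S$ splits at each step, then aggregate) is indeed the strategy of the paper, but the central inequality you propose, $\irreg(\mathscr{S}\cup\mathscr{T})\gtrsim \alpha\sum_S|S|\,\Phi(S)$, does not hold in the unconditional form you state, and the missing hypothesis is exactly what forces the paper into a two-case analysis. The irregularity you can extract from the splitting of a part $S\subset V$ at step $j$ is obtained by pairing the split-off piece $S_{j-1,Y}$ against the blue portions $T'$ of parts $T$ lying inside the various $Y\in\mathscr{Q}_{j-1}$; summing over $Y$ (this is Lemma \ref{sumalot}) produces a factor $(C-.8)|W|$ where $C$ is the fraction of $W$ that is still blue. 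If both sides degrade together, $C$ can drop below $.8$ and every per-step contribution from that point on is worthless, so the aggregation never closes. The paper resolves this by splitting into: (i) some step has one side $\ge .9$ blue and the other $\le .97$ blue, handled by the cumulative bookkeeping (Lemma \ref{addingup}); (ii) both sides collapse from $\ge.97$ to $\le.9$ in a single step, handled by a one-step argument (Lemma \ref{onestep}) that only needs the other side blue \emph{at the start} of that step. Your proposal has no mechanism for case (ii), and without it the monotone sequences $v_i,w_i$ could in principle cross the threshold simultaneously and escape your bound. Two further substantive gaps: your $\Phi(S)$, defined via how unevenly $S\cap B$ is bipartitioned, undercounts parts that shatter into many small pieces with no majority piece (the smaller side of each bipartition can be a tiny fraction of the red mass created); the paper needs a separate estimate (Case 2 of Lemma \ref{sumalot}, the $\|\lambda\|_1^2-\|\lambda\|_2^2$ computation, and the type-C classification with its $1/6$ losses) precisely for this. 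And when you subtract off $G-\widetilde G_j$ via Lemma \ref{regular}, applying it pair-by-pair to each $(S,T)$ gives an error that sums to far more than $x_j^{-1/4}\alpha|V||W|$; you must first group all pairs inside a fixed $X_Y^a\times Y_X^b$, observe that every term there has the \emph{same sign}, and apply Lemma \ref{regular} once to the union — that observation is not in your write-up, and your suggestion to instead use the singular-value machinery on a ``discrepancy matrix'' for the lower bound is not how the construction's properties are used (property 2 enters combinatorially, via the fact that at most $3/4$ of the $Y$ fail to separate a given pair of pieces; the singular values only control the upper bound on $e_{G-\widetilde G_j}$).

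On the final count: your first accounting (factor of $2$ loss) is indeed wrong, and the fix is the injection in the other direction. Each part $S$ containing a blue vertex has a unique majority part $X\in\mathscr{P}_s$, so two blue vertices lying in different parts of $\mathscr{P}_s$ must lie in different parts of $\mathscr{S}$; since blue vertices occupy $.97$ of $V$ and the parts of $\mathscr{P}_s$ are equal-sized, at least $.97|\mathscr{P}_s|$ parts of $\mathscr{P}_s$ contain a blue vertex, each witnessing a distinct part of $\mathscr{S}$. This gives $.97|\mathscr{P}_s|$ with no factor of $2$.
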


First, we will establish a simple fact that makes future calculations easier. It uses the triangle inequality to show that the irregularity between a pair $S$, $T$ of parts is large if there are large subsets $S', S'' \subset S$ and $T', T'' \subset T$ for which the densities $d(S',T')$ and $d(S'',T'')$ are not close. 

\begin{lemma} \label{irreghelpsinglepair} 
Suppose we have, for two sets of vertices $S$ and $T$, subsets $S', S'' \subset S$ and $T' , T'' \subset T$ for which $|S''||T''| \le |S'||T'|$. Then

\[\irreg(S,T) \ge \frac{1}{2} \left|e_G(S'',T'')-\frac{|S''||T''|}{|S'||T'|}e_G(S',T')\right | .\]
\end{lemma}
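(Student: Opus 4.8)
The plan is to prove Lemma \ref{irreghelpsinglepair} directly from the definition of irregularity together with the triangle inequality. Recall that $\irreg(S,T) = \max_{U \subset S, W \subset T} |e_G(U,W) - |U||W| d(S,T)|$, where $d(S,T) = e_G(S,T)/(|S||T|)$. The key observation is that $\irreg(S,T)$ bounds, for \emph{every} subset $U \subset S$ and $W \subset T$, the deviation of $e_G(U,W)$ from the ``expected'' value $|U||W| d(S,T)$ predicted by the global density. So I would apply this bound to the pair $(S',T')$ and to the pair $(S'',T'')$ separately, obtaining
\[
\bigl| e_G(S',T') - |S'||T'| d(S,T) \bigr| \le \irreg(S,T), \qquad \bigl| e_G(S'',T'') - |S''||T''| d(S,T) \bigr| \le \irreg(S,T).
\]

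Next I would combine these two inequalities to control the quantity appearing in the lemma. Write $\delta = d(S,T)$. From the first inequality, $\frac{|S''||T''|}{|S'||T'|} e_G(S',T')$ differs from $|S''||T''|\delta$ by at most $\frac{|S''||T''|}{|S'||T'|}\irreg(S,T) \le \irreg(S,T)$, where this last step uses the hypothesis $|S''||T''| \le |S'||T'|$ — this is exactly where that assumption is needed, to ensure the rescaling factor is at most $1$. From the second inequality, $e_G(S'',T'')$ differs from $|S''||T''|\delta$ by at most $\irreg(S,T)$. By the triangle inequality,
\[
\left| e_G(S'',T'') - \frac{|S''||T''|}{|S'||T'|} e_G(S',T') \right| \le \irreg(S,T) + \irreg(S,T) = 2\,\irreg(S,T),
\]
which rearranges to the claimed bound $\irreg(S,T) \ge \frac{1}{2}\left| e_G(S'',T'') - \frac{|S''||T''|}{|S'||T'|} e_G(S',T') \right|$.

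There is essentially no main obstacle here; the only subtlety is bookkeeping the direction of the rescaling inequality, i.e., making sure the hypothesis $|S''||T''| \le |S'||T'|$ is invoked so that $\frac{|S''||T''|}{|S'||T'|} \le 1$ and the error term from rescaling $e_G(S',T')$ stays bounded by $\irreg(S,T)$ rather than by a potentially larger multiple. One should also note that $S', S''$ need not be disjoint, and likewise $T', T''$ — the argument does not use disjointness, only that each is a subset of $S$ (resp.\ $T$), so the bound from the definition of irregularity applies to each pair. With this plan the proof is a short three-line computation.
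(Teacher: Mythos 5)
Your proof is correct and is essentially the paper's argument in a different packaging: both insert the intermediate quantity $|S''||T''|\,d(S,T)$, both use $|S''||T''|\le|S'||T'|$ to keep the rescaling factor at most $1$, and both finish with the triangle inequality (the paper phrases it as a case split "one of the two terms is at least half," which is the contrapositive of your direct bound $\le 2\,\irreg(S,T)$). No gaps.
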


\begin{proof}
By the triangle inequality, we have that either

\begin{equation}\label{eq1or2a}\left|e_G(S'',T'')-\frac{|S''||T''|}{|S||T|}e_G(S,T)\right| \ge \frac{1}{2} \left |e_G(S'',T'')-\frac{|S''||T''|}{|S'||T'|}e_G(S',T')\right |,\end{equation}

or 

\begin{equation}\label{eq1or2b}\left |\frac{|S''||T''|}{|S'||T'|}e_G(S',T')-\frac{|S''||T''|}{|S||T|}e_G(S,T)\right | \ge \frac{1}{2} \left |e_G(S'',T'')-\frac{|S''||T''|}{|S'||T'|}e_G(S',T')\right |.\end{equation}
Indeed, the sum of the left hand sides of (\ref{eq1or2a}) and (\ref{eq1or2b}) is at least the sum of the right hand sides. If (\ref{eq1or2a}) holds, then we get the desired bound on the irregularity using the subsets $S''$ and $T''$. So we may suppose (\ref{eq1or2b}) holds. 

Since $|S''||T''| \le |S'||T'|$, multiplying the left side of (\ref{eq1or2b})  by $\frac{|S'||T'|}{|S''||T''|}\ge 1$ gives that

\[\left|e_G(S',T')-\frac{|S'||T'|}{|S||T|}e_G(S,T)\right| \ge \frac{1}{2} \left |e_G(S'',T'')-\frac{|S''||T''|}{|S'||T'|}e_G(S',T')\right |.\]

Since $\irreg(S,T)$ is the maximum of the expression on the left over all pairs of subsets of $S$ and $T$, this gives that
\[ \irreg(S,T) \ge \frac{1}{2} \left |e_G(S'',T'')-\frac{|S''||T''|}{|S'||T'|}e_G(S',T')\right |.\]

\end{proof}

Our goal is to give a lower bound on the irregularity of the partition $\mathscr{S} \cup \mathscr{T}$. Since $G$ is bipartite, the irregularity is zero between a pair of parts in $\mathscr{S}$ or a pair of parts in $\mathscr{T}$. Hence, $\irreg(\mathscr{S} \cup \mathscr{T})$ is equal to \[\irreg(\mathscr{S},\mathscr{T}):=\sum_{\substack{S \in \mathscr{S}\\T \in \mathscr{T}}}\irreg(S,T).\] 
If we have such subsets for the previous lemma for certain pairs of parts, by collecting the irregularity between these pairs, we obtain the following corollary giving a lower bound on the irregularity of the partition. 

\begin{corollary} \label{irreghelp}

Suppose we have an $\mathscr{R} \subset \mathscr{S} \times \mathscr{T}$, and, for each pair of parts $(S,T) \in \mathscr{R}$, subsets $S_T^0, S_T^1 \subset S$ and $T_S^0, T_S^1 \subset T$ satisfying $|S_T^0||T_S^0| \le |S_T^1||T_S^1|$. Then, 

\[\irreg(\mathscr{S},\mathscr{T}) \ge \frac{1}{2} \sum_{(S,T) \in \mathscr{R}} \left | e_G(S_T^0,T_S^0)-\frac{|S_T^0||T_S^0|}{|S_T^1||T_S^1|}e_G(S_T^1,T_S^1)\right | .\]

\end{corollary}

\begin{proof}

By the previous lemma, for any pair of $(S,T) \in \mathscr{R}$, we have

\[\irreg(S,T) \ge \frac{1}{2} \left |e_G(S_T^0,T_S^0)-\frac{|S_T^0||T_S^0|}{|S_T^1||T_S^1|}e_G(S_T^1,T_S^1)\right | .\]

Adding this up for all $(S,T) \in \mathscr{R}$, we obtain

\[\irreg(\mathscr{S},\mathscr{T}) \ge \frac{1}{2} \sum_{(S,T) \in \mathscr{R}} \left | e_G(S_T^0,T_S^0)-\frac{|S_T^0||T_S^0|}{|S_T^1||T_S^1|}e_G(S_T^1,T_S^1)\right |, \]

which is what we wanted to show.

\end{proof}

For each step $i$, we define a coloring of the vertices. For a vertex $v$, look at the element $S$ in $\mathscr{S}$ or $\mathscr{T}$ containing $v$, and the part $X$ in step $i$ containing $v$. If $|S \cap X| > |S|/2$, color $v$ blue, otherwise, color it red. 

Since we keep refining partitions, a vertex can change from blue to red, but it can never change back to blue. By assumption, $\mathscr{S}$ refines $\mathscr{P}_1$ and  $\mathscr{T}$ refines $\mathscr{Q}_1$, so every vertex is blue in step $1$.

For each $i$, let $R_i$ be the set of vertices that turn red in step $i$. Let $\widetilde{R}_i$ be the set of vertices that are red in step $i$. Thus, $\widetilde{R}_i=R_1 \cup R_2 \cup R_3 \cup \ldots \cup R_i$ is a partition of $\widetilde{R}_i$. We have  $R_1 = \emptyset$ as every vertex is blue in step $1$. For any part $S$ of $\mathscr{S} \cup \mathscr{T}$, let $i_S$ be the last step when it contains blue vertices. Note that more than half of the vertices of $S$ are blue in step $i_S$, but in step $i_S+1$ (and later steps), there are no blue vertices. For $j \le i_S$, let $S_j$ be the set of vertices in $S$ that are blue in step $j$. Let $\widetilde{S}_j$ be the set of vertices in $S$ that were blue in the previous step, but are red now, that is, $\widetilde{S}_j=S_{j-1} \setminus S_j$. Also, for $S \in \mathscr{S}$ and each $j \leq i_S$ there is a unique $X \in \mathscr{P}_j$ such that $S_j \subset X$. For each $Y \in \mathscr{Q}_j$, this $X$ is divided into $X_Y^0$ and $X_Y^1$, and for $a=0,1$, let $S_{j,Y}^a=S_{j} \cap X_Y^a$. For each $S$ and $Y$, let $S_{j,Y}$ be the smaller and $S_{j,Y}^*$ be the larger of $S_{j,Y}^0$ and $S_{j,Y}^1$, breaking ties arbitrarily. 



The next lemma provides an important estimate in establishing a lower bound on the irregularity. The set-up is that we have a fixed $S \in \mathscr{S}$ and $X \subset \mathscr{P}_{j-1}$ such that $S_{j-1} \subset X$. That is, $|S_{j-1}|=|S \cap X| \geq |S|/2$. It then roughly says that $S_{j,Y}$ is typically a large fraction of how much $S_{j-1}$ breaks off at the next step. Note that in the next lemma the averaging for a typical $Y$ is  weighted by $|Y \cap W'|$, where $W'$ is a large subset of $W$. For a vector $\lambda=(\lambda_i) \in \mathbb{R}^k$ and $1 \leq p < \infty$, we write $\| \lambda \| _p$ for $(\sum_{i=1}^k |\lambda_i|^p)^{1/p}$ and $\| \lambda \|_{\infty}$ for $\max_{1 \leq i \leq k} |\lambda_i|$. For $X \in \mathscr{P}_{j-1}$, let $\mathscr{Q}_X$ be the set of $Y \in \mathscr{Q}_{j-1}$ such that $X,Y$ is an active pair.

\begin{lemma} \label{sumalot}
Let $2 \le j \le s$. Let $W' \subset W$ with $|W'|=C|W|$. For each $Y \in \mathscr{Q}_{j-1}$, let $Y'=Y \cap W'$.  Let $S \in \mathscr{S}$ be such that $i_S \ge j-1$, and let $X \in \mathscr{P}_{j-1}$ be the unique part with $S_{j-1} \subset X$. 
\begin{enumerate}
\item If $i_S\ge j$, then
\[\sum_{Y \in \mathscr{Q}_X} |S_{j-1,Y}||Y'| \ge |\widetilde{S}_j|(C-.8)|W| . \]

\item If $i_S=j-1$, then
\[\sum_{Y \in \mathscr{Q}_X} |S_{j-1,Y}||Y'| \ge \frac{1}{2}(|S_{j-1}|-|S|/2)(C-.8)|W|. \]

\end{enumerate}

\end{lemma}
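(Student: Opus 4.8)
The plan is to isolate one separation estimate and deduce both parts from it. Write $P_1,\dots,P_{2x_j}$ for the parts of $\mathscr{P}_j$ contained in $X$, put $m_l=|S\cap P_l|$ (so $\sum_l m_l=|S\cap X|=|S_{j-1}|$), and for $Y\in\mathscr{Q}_{j-1}$ say that $Y$ \emph{separates} $P_l$ and $P_{l'}$ if one of them lies in $X_Y^0$ and the other in $X_Y^1$. The estimate I aim for is that for every pair $l\ne l'$,
\[
g(l,l'):=\sum_{\substack{Y\in\mathscr{Q}_X\\ Y\text{ separates }P_l,P_{l'}}}|Y'|\ \ge\ (C-.8)\,|W|.
\]
Granting this, Part 1 is almost immediate: since $i_S\ge j$ there is a unique $P_{l^*}$ with $m_{l^*}=|S_j|>|S|/2$, so $\sum_{l\ne l^*}m_l=|S_{j-1}|-|S_j|=|\widetilde S_j|$; and for each $Y\in\mathscr{Q}_X$ the side $X_Y^a$ containing $P_{l^*}$ has $|S_{j-1}\cap X_Y^a|\ge m_{l^*}>|S|/2\ge|S_{j-1}|/2$, so it is the larger side and $S_{j-1,Y}$ is exactly the union of the sets $S\cap P_l$ over the parts $P_l$ separated from $P_{l^*}$ by $Y$. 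Hence
\[
\sum_{Y\in\mathscr{Q}_X}|S_{j-1,Y}|\,|Y'|=\sum_{l\ne l^*}m_l\,g(l,l^*)\ \ge\ (C-.8)|W|\sum_{l\ne l^*}m_l=(C-.8)|W|\,|\widetilde S_j|.
\]

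For Part 2 there is no dominant part, so I bound $|S_{j-1,Y}|\ge|S_{j-1}\cap X_Y^0|\cdot|S_{j-1}\cap X_Y^1|/|S_{j-1}|$ (the product of the two pieces equals $|S_{j-1,Y}|$ times the larger piece, and the larger piece is at most $|S_{j-1}|$), expand $|S_{j-1}\cap X_Y^0|\cdot|S_{j-1}\cap X_Y^1|=\sum_{\{l,l'\}\text{ separated by }Y}m_lm_{l'}$, sum over $Y\in\mathscr{Q}_X$, and apply the estimate to each pair:
\[
\sum_{Y\in\mathscr{Q}_X}|S_{j-1,Y}|\,|Y'|\ \ge\ \frac{(C-.8)|W|}{|S_{j-1}|}\sum_{\{l,l'\}}m_lm_{l'}=\frac{(C-.8)|W|}{2|S_{j-1}|}\Bigl(|S_{j-1}|^2-\sum_l m_l^2\Bigr).
\]
Because $i_S=j-1$, no part of $\mathscr{P}_j$ holds more than half of $S$, i.e.\ $m_l\le|S|/2$ for all $l$, so $\sum_l m_l^2\le\tfrac12|S|\,|S_{j-1}|$ and the right-hand side is at least $\tfrac12(|S_{j-1}|-|S|/2)(C-.8)|W|$, giving Part 2.

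It remains to prove the separation estimate, which is the only place the defining properties of $G$ are used. The structural point is that $\mathscr{Q}_X$ is a disjoint union of \emph{blocks} $\hat Y_{\hat X}^b$, where $\hat X\in\mathscr{P}_{j-2}$ is the parent of $X$ and $\hat Y$ runs over $\mathscr{Q}_{j-2}$: an active pair $X,Y$ forces its parent pair $\hat X,\hat Y$ to be active, and for an active parent pair the active/inactive status of a child $Y\subset\hat Y$ depends only on which of $\hat Y_{\hat X}^0,\hat Y_{\hat X}^1$ contains $Y$, so the active children of $\hat Y$ form a union of such half-blocks; moreover, whenever $\hat Y_{\hat X}^b$ occurs the weight of $\widetilde G_{j-1}$ between $\hat X_{\hat Y}^a$ (the half of $\hat X$ containing $X$) and $\hat Y_{\hat X}^b$ lies strictly between $0$ and $1$, so property 2 of Section \ref{construction} applies with its index equal to $j-1$, its distinguished pair equal to $\hat X,\hat Y$, and its part $B$ equal to $X$. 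Property 2 then says that for $u\in P_l,v\in P_{l'}$, within any one block the number of $Y$ that do \emph{not} separate $P_l,P_{l'}$ exceeds the number that do by at most $t=x_{j-1}/2$; since each block has $x_{j-1}$ parts, at most $\tfrac34x_{j-1}$ of them fail to separate $P_l,P_{l'}$. Summing over the blocks, at most $\tfrac34|\mathscr{Q}_X|\le\tfrac34|\mathscr{Q}_{j-1}|$ of the parts in $\mathscr{Q}_X$ fail to separate $P_l,P_{l'}$, so (each part of $\mathscr{Q}_{j-1}$ having equal size $|W|/|\mathscr{Q}_{j-1}|$) they contribute at most $\tfrac34|W|$ to $\sum_{Y\in\mathscr{Q}_X}|Y'|$. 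On the other hand $\sum_{Y\in\mathscr{Q}_X}|Y'|\ge|W'|-\sum_{Y\notin\mathscr{Q}_X}|Y|\ge C|W|-.05|W|$ by Lemma \ref{inactive}, and subtracting gives $g(l,l')\ge(C-.8)|W|$.

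The main obstacle is the bookkeeping around this estimate: verifying the block decomposition of $\mathscr{Q}_X$ and that property 2 genuinely applies to each block, and nailing down the coloring facts used above --- namely $S_{j-1}=S\cap X$, the existence of $P_{l^*}$ with $S_j=S\cap P_{l^*}$ and $m_{l^*}>|S|/2$ when $i_S\ge j$, the identity $\sum_{l\ne l^*}m_l=|\widetilde S_j|$, and $m_l\le|S|/2$ for all $l$ when $i_S=j-1$. Once these are in place, both halves reduce to the short computations above, Part 2 relying in addition only on the elementary bound $|S_{j-1,Y}|\ge|S_{j-1}\cap X_Y^0|\,|S_{j-1}\cap X_Y^1|/|S_{j-1}|$.
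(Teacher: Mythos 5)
Your proposal is correct and follows essentially the same route as the paper: the same per-pair separation estimate $(C-.05-.75)|W|$ combining Lemma \ref{inactive} with property 2 of the construction, the same decomposition of $S_{j-1,Y}$ into separated pieces for part 1, and the same quadratic identity together with $\sum_l m_l^2 \le \|m\|_\infty \|m\|_1 \le (|S|/2)|S_{j-1}|$ for part 2. The only difference is that you spell out, via the half-block decomposition of $\mathscr{Q}_X$, the claim that at most a $3/4$ proportion of the active $Y$ fail to separate a given pair, which the paper asserts directly "by construction"; your justification of that step is correct.
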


\begin{proof}

In the case $C \leq .8$, the desired bound is trivial, and so we may (and will) assume $C> .8$. 

First, note that by Lemma \ref{inactive}, at most a $.05$ proportion of the $Y$ are inactive with $X$, so at most $.05|W|$ vertices are in a $Y$ that is inactive with $X$. The set $S_{j-1}$ is divided into parts $A_1,\ldots,A_k$ by $\mathscr{P}_j$. First, look at a fixed pair of distinct parts $A_h$ and $A_i$. By construction, at most a $\frac{3}{4}$ proportion of the $Y$ in $\mathscr{Q}_{j-1}$ do not separate $A_h$ and $A_i$, and so at most $\frac{3}{4}|W|$ vertices are in a $Y$ that does not separate them. Call $Y \in \mathscr{Q}_{j-1}$ {\it good (with respect to the pair $h,i$)} if the pair $X,Y$ is active and $Y$ separates $A_h$ and $A_i$. For each $h$ and $i$, let $\mathscr{R}(h,i)$ be the set of $Y$ which are good. It follows that \begin{equation}\label{eq:2in} \sum_{Y \in \mathscr{R}(h,i)} |Y'| \geq |W'|-.05|W|-\frac{3}{4}|W| = \left(C-\frac{3}{4}-.05\right)|W|=\left(C-.8\right)|W|.\end{equation}

Now, we look at the first case. Since $i_S \ge j$, $S_j$ is well-defined, and is one of the $A_i$. Without loss of generality, we may assume $A_1=S_j$. Then $\widetilde{S}_j$ is the union of all $A_h$ with $h \ge 2$. Since $\left|S_j\right| \ge |S|/2$, for each $Y$, the partition of $S_{j-1}$ into two parts corresponding to $Y$ satisfies that $A_1$ is a subset of the larger part $S_{j-1,Y}^*$ and is hence disjoint from the smaller part $S_{j-1,Y}$. Thus $S_{j-1,Y}$ is the union of those $A_h$ for which $Y$ separates the vertices in $A_1$ from the vertices in $A_h$. Using \eqref{eq:2in}, we have
\begin{align*}\sum_{Y \in \mathscr{Q}_X} |S_{j-1,Y}||Y'|= \sum_{\substack{h \ge 2\\ Y \in \mathscr{R}(1,h)}} |A_h||Y'| \geq \sum_{h \ge 2} |A_h|\left(C-.8\right)|W| =  |\widetilde{S}_j|\left(C-.8\right)|W|.\end{align*} This is the desired inequality, and completes this case.

We next consider the second case. Let $\lambda_h=|A_h|$. For each $Y$ for which $X,Y$ is an active pair, we have 
\begin{equation}\label{eq:sys} |S_{j-1,Y}|\left(|S_{j-1}|-|S_{j-1,Y}|\right)=\sum_{\substack{h<i\\ Y \in \mathscr{R}(h,i)}}\lambda_{h}\lambda_{i}.\end{equation}
Thus, 
\begin{align*} \sum_{Y \in \mathscr{Q}_X} |S_{j-1,Y}| |S_{j-1}| |Y'| \geq & \sum_{Y \in \mathscr{Q}_X} |S_{j-1,Y}| \left(|S_{j-1}|-|S_{j-1,Y}|\right) |Y'| =
\sum_{Y \in \mathscr{Q}_X}\sum_{\substack{h<i\\ Y \in \mathscr{R}(h,i)}} \lambda_{h}\lambda_{i}|Y'| \\ = & \sum_{h<i} \lambda_{h}\lambda_{i}\sum_{Y \in \mathscr{R}(h,i)}|Y'| \geq  \sum_{h<i} \lambda_{h}\lambda_{i}\left(C-.8\right)|W|\\ = &\left(C-.8\right)|W| \sum_{h<i} \lambda_{h}\lambda_{i}= 
\left(C-.8\right)|W| \left(\| \lambda \|_1^2 - \| \lambda \|_2^2 \right)/2 \\ \geq & \left(C-.8\right)|W| \|\lambda \|_1  \left(\| \lambda \|_1 - \| \lambda \|_{\infty} \right)/2 \\ = & \left(C-.8\right)|W| |S_{j-1}| \left(|S_{j-1}| - \| \lambda \|_{\infty} \right)/2. \end{align*}
The first equation above is from substituting in (\ref{eq:sys}), the second equation is by changing the order of summation, the second inequality is by substituting in (\ref{eq:2in}), the last inequality uses the inequality $\| \lambda \|_2^2 \leq \| \lambda \|_{\infty}\| \lambda \|_1$, and the last equality is by substituting $\| \lambda \|_1=|S_{j-1}|$. Dividing both sides by $|S_{j-1}|$, we obtain (using that $\| \lambda \|_{\infty} \le |S|/2$)
\[ \sum_{Y \in \mathscr{Q}_X} |S_{j-1,Y}||Y'| \geq  \left(C-.8\right)|W| \left(|S_{j-1}| - \| \lambda \|_{\infty} \right)/2 \geq \left(C-.8\right)|W| \left(|S_{j-1}| - |S|/2\right)/2,\]
which completes the proof. 
\end{proof}

Our basic strategy is to show that if a significant proportion of the vertices turn red by step $s$, then we must have a certain amount of irregularity. There will be two approaches to show this. If the number of red vertices increases by a substantial amount in a single step, then we will look only at the partitions in that step, and analyzing these we will obtain enough irregularity. However, it is possible that the number of red vertices slowly increases on both sides, and eventually after many steps it adds up to a substantial amount by a certain step, even though it never increases by much in any single step. The following lemma will be applicable in this case. It obtains an amount of irregularity from each step proportional to the number of new red vertices $j \le i$. With careful bookkeeping, it manages to add these up to give a substantial lower bound on the irregularity.

\begin{lemma} \label{addingup}

Suppose in step $i \geq 2$ we have $C|W|$ blue vertices in $W$ and $r|V|$ red vertices in $V$. Then   \[\irreg(\mathscr{S},\mathscr{T}) \geq \frac{r}{12}\left(C-.8\right)\alpha|V||W|-\frac{1}{2}x_2^{-1/4}\alpha|V||W|.\]
\end{lemma}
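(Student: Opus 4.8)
\textbf{Proof proposal for Lemma \ref{addingup}.}

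The plan is to apply Corollary \ref{irreghelp} with a carefully chosen collection $\mathscr{R}$ of pairs of parts, one that ``charges'' each step $j$ with $2 \le j \le i$ an amount of irregularity proportional to how many new red vertices appeared on side $V$ at step $j$. Concretely, for each part $S \in \mathscr{S}$ and each step $j$ with $2 \le j \le \min(i, i_S)$, there is a unique $X \in \mathscr{P}_{j-1}$ with $S_{j-1} \subset X$. For every active $Y \in \mathscr{Q}_X$, let $T$ be the part of $\mathscr{T}$ whose intersection with $Y$ is more than half of $T$ — we will need this notion to be well-defined, which is exactly where the ``blue vertices in $W$'' hypothesis enters: if $C > .8$, then by symmetry (and Lemma \ref{inactive}) most of $W$ is still blue in step $i$, so most parts $T$ of $\mathscr{T}$ are concentrated in a single part of $\mathscr{Q}_{i-1} \supseteq \mathscr{Q}_{j-1}$. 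Then $T_{j-1} = T \cap Y$ (with $|T_{j-1}| \ge |T|/2$) sits inside one of the two halves $Y_X^0, Y_X^1$, say $Y_X^{b(T)}$, and I take the subsets $S_T^0 = S_{j,Y}$ (the smaller half of how $Y$ splits $S_{j-1}$), $S_T^1 = S_{j,Y}^*$ (the larger half, which lies in $X_Y^{1-a}$ where $S_{j,Y}$ lies in $X_Y^{a}$... more precisely I put the two subsets on opposite sides so that the $\pm\alpha$ weights of $G_j$ disagree), $T_S^0 = T_S^1 = T_{j-1}$. The point is that $e_{\widetilde G_j}$ between $S_{j,Y}$ and $T_{j-1}$ versus between $S_{j,Y}^*$ and $T_{j-1}$ differ by exactly $2\alpha |S_{j,Y}||T_{j-1}|$ (one pair gets $+\alpha$, the other $-\alpha$ in $G_j$, and $\widetilde G_{j-1}$ is constant across $X \times Y$), while by Lemma \ref{regular} the contribution of $G - \widetilde G_j$ to each term is at most $.9 x_j^{-1/4}\alpha$ times the relevant sizes, which is negligible.

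Having set this up, Corollary \ref{irreghelp} gives
\[
\irreg(\mathscr S,\mathscr T) \ge \frac12 \sum_{(S,T)\in\mathscr R} \left| e_G(S_T^0,T_S^0) - \tfrac{|S_T^0||T_S^0|}{|S_T^1||T_S^1|} e_G(S_T^1,T_S^1)\right|
\ge \frac12 \sum_{(S,T)\in\mathscr R} \left( \alpha' |S_{j,Y}||T_{j-1}| - (\text{error})\right),
\]
for an effective weight $\alpha'$ comparable to $\alpha$ (I will need to be a little careful with the ratio $|S_T^0||T_S^0|/|S_T^1||T_S^1| \le 1$ and the factor it introduces — it costs a constant, which is where the $1/12$ rather than something larger comes from). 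Summing first over the parts $T$ of $\mathscr T$ concentrated in $Y$ (whose total size is $\approx |Y'|$ for $W' = $ the blue vertices of $W$), this replaces $|T_{j-1}|$ by a quantity $\ge$ const $\cdot |Y \cap W'|$; then summing over $Y \in \mathscr Q_X$ I invoke Lemma \ref{sumalot}(1) (for steps $j < i_S$, where $\widetilde S_j$ is the set of new-red vertices of $S$ at step $j$) and Lemma \ref{sumalot}(2) (for the terminal step $j = i_S$, where the gain is in terms of $|S_{i_S} | - |S|/2$) to get, for each $S$ and each relevant $j$, a contribution $\gtrsim \alpha(C-.8)|W| \cdot |\widetilde S_j|$ (resp. $|S_{i_S}| - |S|/2$). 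Finally I sum over $j \le i$ and over $S \in \mathscr S$: the telescoping $\sum_{j\le i, j\le i_S} |\widetilde S_j| \,(+\, |S_{i_S}|-|S|/2 \text{ if } i_S \le i)$ counts each vertex of $S$ that is red by step $i$ essentially once, so over all $S$ this is $\approx r|V|$ (here the $|S_{i_S}|-|S|/2$ term and the ``$\ge |S|/2$ vertices are blue in step $i_S$'' fact from the paragraph after Corollary \ref{irreghelp} combine to make the bookkeeping come out to at least a constant fraction of the red vertices). Collecting the constants and absorbing the accumulated $x_j^{-1/4}$-errors into the single term $\frac12 x_2^{-1/4}\alpha|V||W|$ (legitimate since $x_2 \le x_j$ for $j\ge 2$ and the geometric-type sum of $x_j^{-1/4}$ is dominated by its first term) yields
\[
\irreg(\mathscr S,\mathscr T) \ge \frac{r}{12}(C-.8)\alpha|V||W| - \frac12 x_2^{-1/4}\alpha|V||W|.
\]

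The main obstacle I anticipate is the bookkeeping in the final double summation: making sure that when I sum $|\widetilde S_j||Y'|$-type quantities over all steps $j$, all parts $S$, and all the $T$'s hanging off a given $Y$, no vertex is overcounted in a way that loses more than a constant factor, and that the pairs $(S,T)$ I select are genuinely distinct across different $(S,j)$ (they are, because $T$ determines which part of $\mathscr Q_{j-1}$ it lies in, hence $j$ is not free — but this needs to be stated carefully). A secondary subtlety is handling parts $T \in \mathscr T$ that are \emph{not} concentrated in a single part of $\mathscr Q_{j-1}$, i.e. the red vertices on the $W$-side; these are controlled precisely because there are few of them ($(1-C)|W|$ worth), which is why the bound degrades linearly in $C - .8$ and not in $C$ alone, and why the factor $(C-.8)$ appears exactly as in Lemma \ref{sumalot}.
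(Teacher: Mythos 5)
Your high-level strategy---charging each step $j$ an amount of irregularity proportional to the newly red vertices on the $V$-side, feeding the counts into Lemma \ref{sumalot}, and converting to irregularity via Corollary \ref{irreghelp}---is the same as the paper's, but there is a genuine gap exactly at the point you flag as your ``main obstacle,'' and your proposed resolution of it is false. For a fixed pair $(S,T)$ with $i_S$ large, the \emph{same} pair $(S,T)$ produces a term at \emph{every} step $j\in[2,\min(i,i_S)]$: the partitions $\mathscr{Q}_{j-1}$ are nested, so $T'$ lies inside a part of $\mathscr{Q}_{j-1}$ for every such $j$ simultaneously, and hence ``$T$ determines which part of $\mathscr{Q}_{j-1}$ it lies in, so $j$ is not free'' does not hold. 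Corollary \ref{irreghelp} allows only one pair of subsets per $(S,T)$---the irregularity of a single pair of parts can be charged only once---and with your choices $S_T^0=S_{j-1,Y}$, $S_T^1=S_{j-1,Y}^{*}$ (both depending on $j$) there is no way to aggregate the contributions from different steps into one admissible term. That aggregation is the paper's central device and the reason the lemma is stated the way it is: for $S$ of type A or B it compares each $S_{j-1,Y}$ not with $S_{j-1,Y}^{*}$ but with the \emph{fixed} reference set $S_{i_S}$ (since $|S_{i_S}|>|S|/2$, the smaller half $S_{j-1,Y}$ always lies on the opposite side of the step-$j$ bipartition from $S_{i_S}$, giving the same $2\alpha|S_{j-1,Y}||T'|$ identity); the sets $S_{j-1,Y}$ over different $j$ are pairwise disjoint (each is contained in $\widetilde{S}_j$), so after splitting the index set $J_S$ by the sign of the term (losing a factor $2$) their union is a single legitimate subset $S_T^0$, paired with $S_T^1=S_{i_S}$. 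Only parts of type C, which lose all their blue vertices at once, use your $S_{j-1,Y}$-versus-$S_{j-1,Y}^{*}$ comparison, and only at the single step $j=i_S+1$.

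Two secondary problems. First, the error control cannot be done term by term as you suggest: Lemma \ref{regular} bounds $|e_{G-\widetilde{G}_j}(U,Z)|$ by $.9x_j^{-1/4}\alpha|X_Y^a||Y_X^b|$ \emph{independently} of how small $U$ and $Z$ are, so summing a per-term error over all pairs $(S,T)$ inside a fixed $(X,Y)$ blows up. The paper instead observes that all terms with fixed $X,Y,a,b$ have the same sign, pulls the absolute value outside the sum, and applies Lemma \ref{regular} once to the union sets (with a random-subset device to handle the ratio $|S'|/|S''|$), incurring a single error of $2\cdot .9\cdot x_j^{-1/4}\alpha|X_Y^a||Y_X^b|$ per quadruple. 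Second, your telescoping claim that $\sum_j|\widetilde{S}_j|$ counts ``each red vertex essentially once'' fails for parts $S$ with $i_S<i$ that stayed almost entirely blue through step $i_S$ and then lost all blue vertices at once: for such $S$ the telescoping sum can be an arbitrarily small fraction of $|S|$, yet all of $S$ is red at step $i$. The paper's $5/6$ threshold separating types B and C exists precisely to decide, part by part, whether the telescoped sum or the terminal-step bound from case 2 of Lemma \ref{sumalot} recovers at least $|S|/6$; that $1/6$, combined with the factor $1/2$ from Corollary \ref{irreghelp} and the factor $1/2$ from the sign-splitting, is where the constant $1/12$ comes from---not from the ratio $|S_T^0||T_S^0|/(|S_T^1||T_S^1|)$ as you suggest.
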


\begin{proof}

For a part $T$ of $\mathscr{T}$, let $T'$ be the set of blue vertices in $T$ in step $i$. Note that at least half of $T$ is blue (if any of its vertices are), and that in this case, they all belong to the same part $Y$ of $\mathscr{Q}_i$. Recall that for each $S \in \mathscr{S}$, we defined $i_S$ to be the largest $i$ for which $S$ has blue vertices in step $i$. We classify the parts $S \in \mathscr{S}$ into three types:
\begin{enumerate}[A.]

\item $i_S\ge i$.
\item $i_S<i$ and $S_{i_s}$ contains at most $\frac{5}{6}|S|$ blue vertices.
\item $i_S<i$ and $S_{i_s}$ contains more than $\frac{5}{6}|S|$ blue vertices.
\end{enumerate}

For each $2 \le j \le i$, let $\mathscr{S}_j^0$ be the collection of $S$ of type A, or of type B with $i_S \ge j$. Let $\mathscr{S}_j^1$ be the collection of $S$ of type C with $i_S=j-1$. Let $\mathscr{S}_j$ be the union of these two collections. Let $R_j^0$ be the set of vertices in $R_j$ that belong to some $S \in \mathscr{S}_j^0$ (recall that $R_j$ is the set of vertices that became red in step $j$). Let $R_j^1$ be the set of vertices in $V$ that belong to some $S \in \mathscr{S}_j^1$. Note that it is not necessarily true that the vertices in $R_j^1$ are in $R_j$, however, all vertices in $R_j^1$ will be red by step $j$ (so they will be in $\widetilde{R}_j$), and $R_j^1$ for different $j$ will be disjoint from each other (and from the various $R_j^0$). Also, note that while $R_j$ is a subset of $V \cup W$, $R_j^0$ and $R_j^1$ are subsets of just $V$.

\begin{figure}[h]
\centering

\begin{tikzpicture}
\draw (4,4) ellipse (1.5 and 5); 
\node at (4,9.5) {\LARGE $X$}; 
\node at (9,9.5) {\LARGE $Y$}; 
\node at (2,7) {\LARGE $X_Y^1$}; 
\node at (2,1) {\LARGE $X_Y^0$}; 
\node at (11,7) {\LARGE $Y_X^1$}; 
\node at (11,1) {\LARGE $Y_X^0$};
\draw (2.5,4) -- (5.5,4);
\draw (9,4) ellipse (1.5 and 5);
\draw (7.5,4) -- (10.5,4);
\draw (4.05,3.5) circle (1.4);
\node at (3.1,5) {$S_{j-1}$};
\node at (4.6,4.3) {$S_{j-1,Y}^1$};
\node at (4.6,3.6) {$S_{j-1,Y}^0$};
\draw (3.2,4) -- (3.2,2.4);
\node at (4,2.8) {$S_j$};

\end{tikzpicture}
\caption{The disk is $S_{j-1}$, the bottom right part is $S_j$, the horizontal line divides it into $S_{j-1,Y}^1$ and $S_{j-1,Y}^0$. Recall that $\widetilde{S}_j$ is $S_{j-1}-S_j$.}
\label{figure3a}
\end{figure}
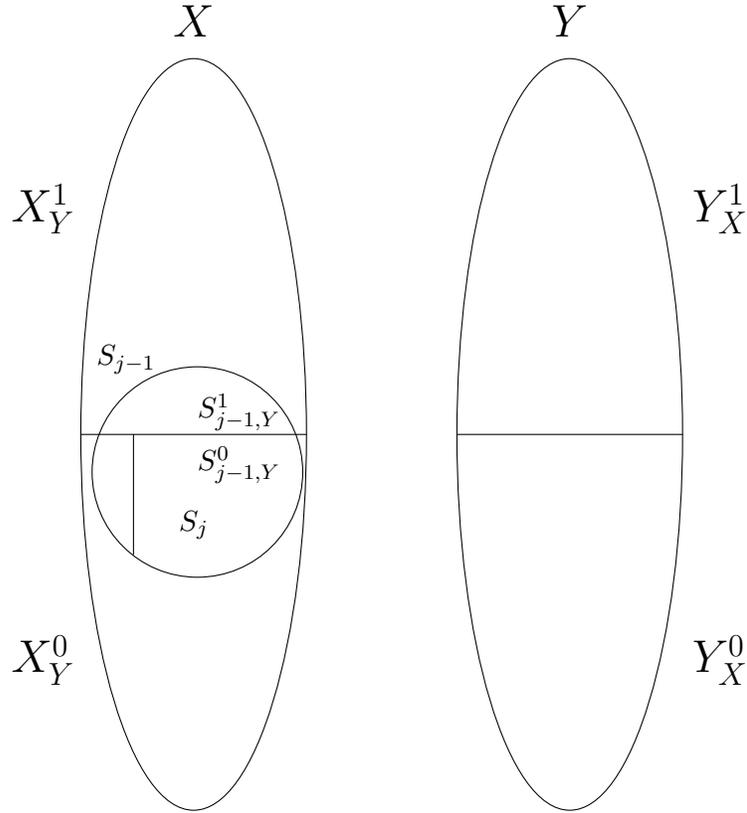

For each $2 \le j \le i$, we do the following. We know that for any $T \in \mathscr{T}$, $T'$ is a subset of a single part of $\mathscr{Q}_i$, so it is in a single part of $\mathscr{Q}_k$ for any $k \le i$. Let $Y$ be any part of $\mathscr{Q}_{j-1}$, and $T$ be such that $T' \subset Y$. 

First, suppose $S \in \mathscr{S}_j^0$, so $i_S \ge j$ and $S_j$ is nonempty. Then, $S_j$ is a subset of a part $X'\in \mathscr{P}_j$, and $S_{j-1}$ will be a subset of a part $X \in \mathscr{P}_{j-1}$ (where clearly $X' \subset X$). Assume that the pair $X$, $Y$ is active. Recall that $X$ is divided into two parts by $Y$, $X_Y^0$ and $X_Y^1$. This divides $S_{j-1}$ into two parts,  $S_{j-1,Y}^0$ and $S_{j-1,Y}^1$. The set $S_j$ is a subset of one of them (since it is inside $X'$), and, since $|S_j|>|S|/2$, it will be inside the bigger one. Recall that $S_{j-1,Y}$ is the smaller of the two. Thus, $S_j$ and $S_{j-1,Y}$ are both entirely inside one of $X_Y^0$ and $X_Y^1$, and we know that they are inside different ones. See Figure \ref{figure3a}. This means that one of them has weight $\alpha$ on all edges going to $T'$ in $G_j$, and the other has weight $-\alpha$ on all such edges. Since they both have the same weight in $\widetilde G_{j-1}$, this implies, since $S_{i_s} \subset S_j$, that
\[\left| d_{\widetilde G_j}(S_{j-1,Y},T')-d_{\widetilde G_j}(S_{i_S},T')\right|=2\alpha.\] 
In particular, we have 
\[\left|e_{\widetilde{G}_j}(S_{j-1,Y},T')-\frac{|S_{j-1,Y}|}{|S_{i_s}|}e_{\widetilde{G}_j}(S_{i_s},T')\right|=2\alpha |S_{j-1,Y}||T'|.\]
Note here that $|S_{i_s}| > \frac{|S|}{2} > |S_{j-1,Y}|$.

Observe that, if $S_j \subset X_Y^a$, then the expression within the absolute value on the left is negative if $T' \subset Y_X^a$ (and thus $S_{i_s} \subset X_Y^a$), and positive if $T' \subset Y_X^{1-a}$.

Now, suppose $S \in \mathscr{S}_j^1$. Recall that in this case, $i_S=j-1$, so $S_{j-1}$ is nonempty, but $S_j$ is empty. Thus there is a unique $X \in \mathscr{P}_{j-1}$ such that $S_{j-1} \subset X$. Recall that $\mathscr{Q}_X$ is the set of $Y \in \mathscr{Q}_{j-1}$ such that the pair $X, Y$ is active. For each $Y \in \mathscr{Q}_{X}$, we have $S_{j-1}$ splits into $S_{j-1,Y}^0$ and $S_{j-1,Y}^1$. Recall that $S_{j-1,Y}$ is the smaller of these two sets, and $S_{j-1,Y}^*$ is the larger of these two sets. Note that $T'$ is a subset of $Y_X^b$ for $b=0$ or $b=1$. Thus, as before, we have

\[\left |e_{\widetilde{G}_{j}}(S_{j-1,Y},T')-\frac{\left|S_{j-1,Y}\right|}{\left|S_{j-1,Y}^{*}\right|} e_{\widetilde{G}_{j}}(S_{j-1,Y}^{*},T')\right |=2 \alpha \left|S_{j-1,Y}\right||T'|.\]

Fix an active pair $X,Y$ with $X \in \mathscr{P}_{j-1}$ and $Y \in \mathscr{Q}_{j-1}$. Adding the equations up for all pairs $S,T$ with $S \in \mathscr{S}_j$, $S_{i_S} \subset X$, and $T' \subset Y$, we obtain that

\begin{equation}\label{eqlongexp}\sum_{\substack{S \in \mathscr{S}_j^0 \\ S_{i_s} \subset X \\T' \subset Y}}
\left |e_{\widetilde{G}_j}(S_{j-1,Y},T')-\frac{|S_{j-1,Y}|}{|S_{i_s}|}e_{\widetilde{G}_j}(S_{i_s},T') \right|
+\sum_{\substack{S \in \mathscr{S}_j^1 \\ S_{i_s} \subset X \\T' \subset Y}} \left |e_{\widetilde{G}_{j}}(S_{j-1,Y},T')-\frac{\left|S_{j-1,Y}\right|}{\left|S_{j-1,Y}^{*}\right|} e_{\widetilde{G}_{j}}(S_{j-1,Y}^{*},T')\right |
\end{equation} 
is equal to 
\begin{equation}\label{eqshorte} 
2\alpha \sum_{\substack{S \in \mathscr{S}_j \\ S_{i_s} \subset X \\T' \subset Y}} |S_{j-1,Y}||T'|.
\end{equation}

It will therefore be helpful to find a lower bound on the sum of (\ref{eqshorte}) over all active pairs $X,Y$ with $X \in \mathscr{P}_{j-1}$ and $Y \in \mathscr{Q}_{j-1}$, or equivalently, to find a lower bound on the following sum. 

\begin{equation}\label{eqsumsum1}\sum_{\substack{X \in \mathscr{P}_{j-1} \\ Y \in \mathscr{Q}_{X}}} \sum_{\substack{S \in \mathscr{S}_j\\ S_{i_S} \subset X \\T' \subset Y}} |S_{j-1,Y}||T'|.\end{equation}

Fix an $S \in \mathscr{S}_j$, and look at the terms of (\ref{eqsumsum1}) corresponding to $S$. Let $W'$ be the set of vertices in $W$ which are blue in step $i$, so $|W'|=C|W|$. If $S \in \mathscr{S}_j^0$, then $i_S \ge j$, so 
we can apply case 1 of Lemma \ref{sumalot} to get that the terms with this $S$ sum 
to at least $|\widetilde{S}_j|(C-.8)|W|$, where we recall that $\widetilde{S}_j = S_{j-1} \setminus S_j$. If $S \in \mathscr{S}_j^1$, 
then we can apply case 2 of Lemma \ref{sumalot}, to see 
that the sum of the terms containing $S$ is at least $\frac{1}{2}(|
S_{j-1}|-|S|/2)(C-.8)|W| \ge (C-.8)\frac{|S|}{6}|W|$, since $|S_{j-1}| \ge \frac{5}{6}|S|$. Summing up 
the terms with $S \in \mathscr{S}_j^0$, we obtain $(C-.8) |
R_j^0||W|$. Summing up the terms with $S \in 
\mathscr{S}_j^1$, we obtain at least $(C-.8) \frac{|R_j^1|}{6}|W|$.

Thus, we have obtained that summing (\ref{eqlongexp}) over all active pairs $X,Y$ in step $j-1$ is at least \[2\alpha \left(|R_j^0|+\frac{|R_j^1|}{6}\right)\left(C-.8\right)|W|.\]

We would like to use Lemma \ref{regular} to show that if we change $\widetilde{G}_j$ to $G$ everywhere in \eqref{eqlongexp}, then it does not decrease by much. That is, if we look at
\begin{equation}\label{eqlongexpfinal}\sum_{\substack{S \in \mathscr{S}_j^0 \\ S_{i_s} \subset X \\T' \subset Y}}
\left |e_{G}(S_{j-1,Y},T')-\frac{|S_{j-1,Y}|}{|S_{i_s}|}e_{G}(S_{i_s},T') \right|
+\sum_{\substack{S \in \mathscr{S}_j^1 \\ S_{i_s} \subset X \\T' \subset Y}} \left |e_{G}(S_{j-1,Y},T')-\frac{\left|S_{j-1,Y}\right|}{\left|S_{j-1,Y}^{*}\right|} e_{G}(S_{j-1,Y}^{*},T')\right |,
\end{equation} this is at least the value of \eqref{eqlongexp} minus $2\cdot.9\cdot x_j^{-1/4}|X||Y|$. 

Fix $a,b \in \{0,1\}$. Let $\mathscr{T}'$ be the set of $T \in \mathscr{T}$ for which $T' \subset Y_X^b$.
For $d \in \{0,1\}$, let $\mathscr{S}'_d$ be the set of $S \in \mathscr{S}_j^d$ for which $S_{j-1,Y} \subset X_Y^a$. Let $\mathscr{S}'=\mathscr{S}'_0 \cup \mathscr{S}'_1$. For $S \in \mathscr{S}'_0$, let $S'=S_{j-1,Y}$, $S''=S_{i_s}$, and for $S \in \mathscr{S}'_1$, let $S'=S_{j-1,Y}$, $S''=S_{j-1,Y}^{*}$. Note that in both cases $|S'| \le |S''|$. Then \eqref{eqlongexp} is just summing
\[ \sum_{\substack{S \in \mathscr{S}'\\T \in \mathscr{T}'}} \left| e_{\widetilde{G}_j}(S',T')-\frac{|S'|}{|S''|} e_{\widetilde{G}_j}(S'',T')\right|. \]
over $a,b \in \{0,1\}$. The key observation is that, if $a$ and $b$ are fixed, then every term above has the same sign, and hence the above sum is equal to 
\[\left| \sum_{\substack{S \in \mathscr{S}'\\T \in \mathscr{T}'}} \left( e_{\widetilde{G}_j}(S',T')-\frac{|S'|}{|S''|} e_{\widetilde{G}_j}(S'',T')\right) \right|. \]

First, we want to say that if we change $\widetilde{G}_j$ to $G$ in the expression inside the absolute values here, it changes by at most $2\cdot .9 \cdot x_j^{-1}|X_Y^a||Y_X^b|$. This is equivalent to the statement

\[\left| \sum_{\substack{S \in \mathscr{S}'\\T \in \mathscr{T}'}} \left( e_{G-\widetilde{G}_j}(S',T')-\frac{|S'|}{|S''|} e_{G-\widetilde{G}_j}(S'',T')\right) \right| \le 2\cdot .9 \cdot x_j^{-1}|X_Y^a||Y_X^b|. \]

For each $S \in \mathscr{S}'$, let $S'''$ be a random subset of $S''$ of size $|S'|$, chosen uniformly from all subsets of this size. Then $\mathbb{E}(e_{G-\widetilde{G}_j}(S''',T'))=\frac{|S'|}{|S''|}e_{G-\widetilde{G}_j}(S'',T')$ and so the above expression (without the absolute value signs) is the expected value of 

\[ \sum_{\substack{S \in \mathscr{S}'\\T \in \mathscr{T}'}} \left( e_{G-\widetilde{G}_j}(S',T')- e_{G-\widetilde{G}_j}(S''',T')\right). \]

Let $T_0'=\bigcup_{T \in \mathscr{T}'} T'$, $S_0'= \bigcup_{S \in \mathscr{S}'} S'$, $S_0'''=\bigcup_{S \in \mathscr{S}'} S'''$. The previous sum in absolute value equals \[ \left| e_{G-\widetilde{G}_j}(S_0',T_0')- e_{G-\widetilde{G}_j}(S_0''',T_0') \right| \le 2\cdot .9 \cdot x_j^{-1/4}|X_Y^a||Y_X^b|,\] where the inequality is by Lemma \ref{regular}. This holds for any choice of $S'''$, and so it holds for the expected value. Hence, \[\left| \sum_{\substack{S \in \mathscr{S}'\\T \in \mathscr{T}'}} \left( e_{G-\widetilde{G}_j}(S',T')-\frac{|S'|}{|S''|} e_{G-\widetilde{G}_j}(S'',T')\right) \right| \le 2\cdot .9 \cdot x_j^{-1/4}|X_Y^a||Y_X^b|. \]

This implies that
\begin{align*} 
\sum_{\substack{S \in \mathscr{S}'\\T \in \mathscr{T}'}} \left| e_{G}(S',T')-\frac{|S'|}{|S''|} e_{G}(S'',T')\right| \ge& \left| \sum_{\substack{S \in \mathscr{S}'\\T \in \mathscr{T}'}} \left( e_{G}(S',T')-\frac{|S'|}{|S''|} e_{G}(S'',T')\right) \right| \\ \ge& \left| \sum_{\substack{S \in \mathscr{S}'\\T \in \mathscr{T}'}} \left( e_{\widetilde{G}_j}(S',T')-\frac{|S'|}{|S''|} e_{\widetilde{G}_j}(S'',T')\right) \right|-2\cdot .9 \cdot x_j^{-1/4}|X_Y^a||Y_X^b| \\=& \sum_{\substack{S \in \mathscr{S}'\\T \in \mathscr{T}'}} \left| e_{\widetilde{G}_j}(S',T')-\frac{\left|S'\right|}{\left|S''\right|} e_{\widetilde{G}_j}(S'',T')\right|-2\cdot .9 \cdot x_j^{-1/4} | X_Y^a | | Y_X^b| \end{align*}

Thus, the sum of \eqref{eqlongexpfinal} over each active pair $X,Y$ in step $j-1$ and $a,b \in \{0,1\}$ is at least 

\begin{equation}\label{eqp24r} 2\alpha \left(|R_j^0|+\frac{|R_j^1|}{6}\right)\left(C-.8\right)|W|-2\cdot .9 \cdot  x_j^{-1/4}\alpha|V||W|.\end{equation}


This holds for each $2 \le j \le i$. Let us look at the sum of the lower bounds for each $2 \le j \le i$. Recall that $R_i$ is the set of vertices that turn red in step $i$, and $\widetilde{R}_i$ is the set of vertices that are red in step $i$. In the following discussion, the color of a vertex refers to its color in step $i$. For an $S$ of type A, each of its red vertices belong to $R_j^0$ for some $j$, since it turned red in some step. For an $S$ of type B, each of its vertices are red. We know that $i_S$ was the last step when it still had blue vertices, however, since $|S_{i_S}| \le \frac{5}{6}|S|$, we know that at least $1/6$ of the vertices of $S$ were red by step $i_S$, so they belong to $R_j$ for some $j \le i_S$. Also, by definition, for $j \leq i_S$, $S$ will belong to $\mathscr{S}_j^0$. Thus, all the vertices that turned red by step $i_S$ will belong to $R_j^0$ for some $j \leq i_S$ (recall that $R_j^0$ is the set of vertices in $R_j$ that belong to an $S \in \mathscr{S}_j^0$). To summarize, for an $S$ of type B, at least $1/6$ of its vertices will belong to $R_j^0$ for some $j$. This means that the sum of $|R_j^0|$ over $2 \le j \le i$ is at least $1/6$ of the number of red vertices that belong to an $S$ of type A or B. If $S$ is of type C, then by definition, each vertex in $S$ is red, and belongs to $R_{i_S}^1$, and so the sum of $R_{i_S}^1$ over $2 \le j \le i$ is at least the number of red vertices in an $S$ of type C. Since each $S \in \mathscr{S}$ has type A, B, or C, we obtain that the sum of the lower bounds (\ref{eqp24r}) over $2 \le j \le i$ is at least 
\[ \frac{1}{3}\alpha|\widetilde{R}_i \cap V| \left(C-.8\right) |W|-2x_2^{-1/4}\alpha|V||W|.\] 

We will use Corollary \ref{irreghelp} to get a lower bound on the irregularity in terms of the sum of  \eqref{eqlongexpfinal} over active pairs $X,Y$ in step $j-1$, for each $2 \le j\le i$. Fix a pair $S$ and $T$. We will find subsets $S_T^0,S_T^1 \subset S$ and $T_S^0,T_S^1 \subset T$ to obtain a lower bound on the irregularity. Suppose first that $S$ is of type A or B. Thus, we have the term 
\begin{equation} \label{eqshortexpr}
\left| e_G(S_{j-1,Y},T')-\frac{|S_{j-1,Y}|}{|S_{i_S}|}e_G(S_{i_S},T')\right|
\end{equation}
for some (possible empty) set of indices $j\in J_S\subset [2,i] \cap \mathbb{Z}$. Let $J_S^+$ be the set of indices for which the expression inside the absolute values is positive, and let $J_S^-$ be the set for which it is negative. Now, if we take $S_T^0$ to be the (disjoint) union of the sets $S_{j-1,Y},j \in J_S^+$, and $S_T^1=S_{i_S}$, the sum of \eqref{eqshortexpr} over $j \in J_S^+$ is just 
\[\left| e_G(S_T^0,T')-\frac{|S_T^0|}{|S_T^1|}e_G(S_T^1,T')\right|.\] 
The same holds if instead we take $S_T^0$ to be the union of $S_{j-1,Y},j \in J_S^-$, and sum \eqref{eqshortexpr} over $j \in J_S^-$. Now, the sum of the expression over one of $J_S^+$ or $J_S^-$ is at least half the sum over all of $J_S$, so we let $S_T^0$ be the union over $J_S^+$ or $J_S^-$, whichever one has a larger sum, and $S_T^1=S_{i_S}$. Notice that $|S_T^1|=|S_{i_S}| > |S|/2$, and $S_T^0$ is disjoint from $S_T^1$, so $|S_T^0| \le |S_T^1|$. If $S$ is of type C, then $S$ can only appear if $j=i_S+1$, so there is at most one term which contains $S$ and $T$. Thus, we can just take $S_T^0=S_{i_S,Y}$ and $S_T^1=S_{i_S,Y}^*$, and in this case, by definition, $|S_T^0| \le |S_T^1|$. 
Overall, we lose a factor of $2$ in the lower bound (because of the sets $S$ of type A or B), and we obtain a choice of $S_T^0$ and $S_T^1$ with $|S_T^0| \le |S_T^1|$ for certain pairs $S$ and $T$ such that 

\[\sum_{S,T} \left |e_G(S_T^0,T')-\frac{|S_T^0|}{|S_T^1|}e_G(S_T^1,T'))\right | \ge \frac{r}{6}\left(C-.8\right)\alpha|V||W|-x_2^{-1/4}\alpha|V||W|.\]

Applying Corollary \ref{irreghelp} with $T_S^0=T_S^1=T'$, we obtain the desired lower bound on the irregularity of \[\frac{r}{12}\left(C-.8\right)\alpha|V||W|-\frac{1}{2}x_2^{-1/4}\alpha|V||W|.\] 
Note that we can apply this corollary as $|S_T^0||T_S^0| \leq |S_T^1||T_S^1|$, which follows from $T_S^0=T_S^1=T'$ and $|S_T^0| \le |S_T^1|$. This completes the proof.

\end{proof}

The next lemma establishes another lower bound on the irregularity between the two partitions. This will be useful if there is a single step in which the number of red vertices increases by a substantial amount. As this lemma obtains a bound on the irregularity by considering a single step, the proof of this lemma is simpler than that of the previous lemma.

\begin{lemma} \label{onestep}

Suppose in step $i$, we have $C|W|$ blue vertices in $W$ and $\beta|V|$ blue vertices in $V$, and in step $i+1$, we only 
have $\beta'|V|$ blue vertices in $V$, where $\beta>\frac{\beta'+1}{2}$. Then

\[\irreg(\mathscr{S},\mathscr{T}) \geq \frac{1}{4}\left(\beta-\frac{\beta'+1}{2}\right)\left(C-.8\right)\alpha|V||W| - x_{i+1}^{-1/4} \alpha |V||W|. \]
\end{lemma}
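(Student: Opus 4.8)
The plan is to run the argument of Lemma~\ref{addingup} specialized to the single step $j=i+1$; since there is then no summation over several steps, the bookkeeping simplifies considerably. We may assume $C>.8$, since otherwise $\beta>\frac{\beta'+1}{2}$ forces the right-hand side to be negative and there is nothing to prove. Classify the parts $S\in\mathscr{S}$ that still have blue vertices at step $i$ (i.e.\ $i_S\ge i$) into two types: \emph{type A}, where $i_S\ge i+1$, so that $S_{i+1}\neq\emptyset$ and hence $|S_{i+1}|>|S|/2$; and \emph{type C}, where $i_S=i$, so that $S_{i+1}=\emptyset$ while $|S_i|>|S|/2$. Set $A^{*}=\sum_{S\text{ type A}}(|S_i|-|S_{i+1}|)$, $Q=\sum_{S\text{ type C}}|S_i|$, and $P=\sum_{S\text{ type C}}|S|$. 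Counting the vertices of $V$ that turn red between steps $i$ and $i+1$ in two ways gives $A^{*}+Q=(\beta-\beta')|V|$; moreover $Q\le\beta|V|$ (total blue at step $i$) and $P-Q=\sum_{S\text{ type C}}(|S|-|S_i|)\le(1-\beta)|V|$ (total red at step $i$).

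Next, following the proof of Lemma~\ref{addingup} with $j=i+1$: for an active pair $X\in\mathscr{P}_i$, $Y\in\mathscr{Q}_i$, a part $S$ of type A with $S_i\subset X$, and a part $T$ whose blue part $T'$ at step $i$ lies in $Y$, the fact that $\widetilde{G}_i$ is constant across $X\times Y$ together with the $\pm\alpha$ structure of $G_{i+1}$ yields
\[
\Bigl|e_{\widetilde{G}_{i+1}}(S_{i,Y},T')-\tfrac{|S_{i,Y}|}{|S_{i+1}|}\,e_{\widetilde{G}_{i+1}}(S_{i+1},T')\Bigr|=2\alpha|S_{i,Y}||T'| ,
\]
and the analogous identity holds for $S$ of type C with $S_{i+1}$ replaced by $S_{i,Y}^{*}$; once $a,b$ are fixed (with $S_{i,Y}\subset X_Y^a$ and $T'\subset Y_X^b$) all of these terms carry the same sign. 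Summing over all such $S,T$, applying case~1 of Lemma~\ref{sumalot} to the type-A contributions and case~2 to the type-C contributions (taking $W'$ to be the set of blue vertices of $W$ at step $i$, so $|W'|=C|W|$), then summing over all active pairs, then replacing $\widetilde{G}_{i+1}$ by $G$ at a cost of $2\cdot .9\, x_{i+1}^{-1/4}\alpha|V||W|$ via Lemma~\ref{regular} (using a uniformly random equal-size subset of $S_{i+1}$ resp.\ $S_{i,Y}^{*}$ to push the replacement through a single absolute value, exactly as in Lemma~\ref{addingup}), and finally invoking Corollary~\ref{irreghelp} with $T_S^0=T_S^1=T'$, I obtain
\[
\irreg(\mathscr{S},\mathscr{T})\ \ge\ \alpha\Bigl(A^{*}+\tfrac12 Q-\tfrac14 P\Bigr)(C-.8)|W|-.9\, x_{i+1}^{-1/4}\alpha|V||W| .
\]

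The last step is the elementary minimization of $A^{*}+\frac12 Q-\frac14 P=(\beta-\beta')|V|-\frac12 Q-\frac14 P$ subject to $A^{*},Q,P\ge 0$, $A^{*}+Q=(\beta-\beta')|V|$, $Q\le\beta|V|$, and $Q\le P\le Q+(1-\beta)|V|$; pushing $Q$ and $P$ to their extreme feasible values gives
\[
A^{*}+\tfrac12 Q-\tfrac14 P\ \ge\ \tfrac14\bigl((\beta-\beta')-(1-\beta)\bigr)|V|\ =\ \tfrac12\bigl(\beta-\tfrac{\beta'+1}{2}\bigr)|V| ,
\]
which, combined with $.9<1$, yields the stated bound (in fact with $\tfrac12$ in place of $\tfrac14$).

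The main obstacle I anticipate is this combinatorial bookkeeping: correctly apportioning the red increase between the ``one part keeps splitting'' case (type A) and the ``a whole part collapses'' case (type C), and checking that the uniform-sign and disjointness facts needed to carry the $\widetilde{G}_{i+1}\to G$ replacement through a single absolute value genuinely hold in this single-step setting. The analytic ingredients (Lemmas~\ref{sumalot} and~\ref{regular} and Corollary~\ref{irreghelp}) are already in hand, so no new estimates are needed.
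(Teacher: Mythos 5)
Your overall strategy is the paper's: the same type classification, the same application of the two cases of Lemma \ref{sumalot}, the same randomized-subset trick to pass from $\widetilde{G}_{i+1}$ to $G$ via Lemma \ref{regular}, and the same final appeal to Corollary \ref{irreghelp}. But there is a genuine gap in your central identity, and it is exactly the single-step subtlety you flagged but did not resolve. You claim
\[
\Bigl|e_{\widetilde{G}_{i+1}}(S_{i,Y},T')-\tfrac{|S_{i,Y}|}{|S_{i+1}|}\,e_{\widetilde{G}_{i+1}}(S_{i+1},T')\Bigr|=2\alpha|S_{i,Y}||T'|,
\]
with $T'$ the set of blue vertices of $T$ at step $i$. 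In Lemma \ref{addingup} the analogous identity is legitimate because there one works at a step $j\le i$, so $T'$ lies inside a single part of $\mathscr{Q}_j$ and hence inside one half $Y_X^b$ of the relevant bipartition. Here, however, the relevant bipartition $Y=Y_X^0\sqcup Y_X^1$ is the one created at step $i+1$, and $T'$ is only known to lie in a single part $Y\in\mathscr{Q}_i$; it can straddle $Y_X^0$ and $Y_X^1$ arbitrarily. Since $G_{i+1}$ has weight $+\alpha$ from $S_{i,Y}$ to one half and $-\alpha$ to the other, the correct value of the left-hand side is $2\alpha|S_{i,Y}|\bigl|\,|T'\cap Y_X^0|-|T'\cap Y_X^1|\,\bigr|$, which is $0$ when $T'$ is split evenly. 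Your claimed uniform-sign property for fixed $a,b$ fails for the same reason: it presupposes $T'\subset Y_X^b$, which is false in general.

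The repair is the paper's device: replace $T'$ by $T'_X$, the larger of $T'\cap Y_X^0$ and $T'\cap Y_X^1$. With $T'_X$ the identity and the uniform-sign property do hold, and $|T'_X|\ge|T'|/2$ lets you feed $\sum|S_{i,Y}||T'|$ into Lemma \ref{sumalot} at the cost of a factor of $2$. That lost factor is precisely why the lemma's constant is $\tfrac14$ and not the $\tfrac12$ you claim to obtain; your improved constant is an artifact of the false identity. Your accounting of the red-vertex increase ($A^*+Q=(\beta-\beta')|V|$, with the constraints $Q\le(\beta-\beta')|V|$ and $P-Q\le(1-\beta)|V|$) is correct and yields the same quantity $\tfrac12\bigl(\beta-\tfrac{\beta'+1}{2}\bigr)|V|$ as the paper's computation, so once the $T'_X$ correction is made the argument goes through and recovers the stated bound.
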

\begin{proof}
 Let $\mathscr{S}^0$ be the collection of $S \in \mathscr{S}$ with $i_S > i$, and $\mathscr{S}^1$ be the collection of those $S \in \mathscr{S}$ for which $i_S=i$. Let $\mathscr{S}^*$ be the union of these two collections, so $|\bigcup_{S \in \mathscr{S}^*} S|=\beta|V|$. For each $S \in \mathscr{S}^*$ we have $i_S \ge i$, and let $X(S)$ be the part $X \in \mathscr{P}_i$ with $S_i \subset X$. Recall that for any $X \in \mathscr{P}_i$, $\mathscr{Q}_X$ is the set of $Y \in \mathscr{Q}_i$ such that $X,Y$ is an active pair, and, if $X=X(S)$, for any $Y \in \mathscr{Q}_X$, $S_i$ is split into 
$S_{i,Y}^0$ and $S_{i,Y}^1$. Further recall that we defined $S_{i,Y}$ to be the smaller of the two parts and $S_{i,Y}^*$ to be the larger of the two parts. Similarly, for 
each $T$, if $T'$ is the set of vertices in $T$ which are blue in step $i$, we have $T' \subset Y \in \mathscr{Q}_i$ for some $Y$, and for each $X \in \mathscr{P}_i$, $T'$ is split 
into two parts. Let $T'_X$ be the bigger of the two parts. Now, as in the proof of the previous lemma, we have

\begin{equation}\label{eqlong:lm}
\sum_{\substack{X \in \mathscr{P}_i\\Y \in \mathscr{Q}_X}} \sum_{\substack{S \in \mathscr{S}^*:S_i \subset X\\T \in \mathscr{T}:T' \subset Y}}\left|e_{\widetilde{G}_{i+1}}(S_{i,Y},T'_X)-\frac{|S_{i,Y}|}{|S_{i,Y}^{*}|} e_{\widetilde{G}_{i+1}}(S_{i,Y}^{*},T'_X)\right|\\
=2 \alpha \sum_{\substack{X \in \mathscr{P}_i\\Y \in \mathscr{Q}_X}} \sum_{\substack{S \in \mathscr{S}^*:S_i \subset X\\T \in \mathscr{T}:T' \subset Y}}|S_{i,Y}||T_X'|.
\end{equation}

By definition, for each $T$ and $X$, we have $|T'_X| \ge |T'|/2$. Thus, 

\begin{equation}\label{eq562} 2 \alpha \sum_{\substack{X \in \mathscr{P}_i\\Y \in \mathscr{Q}_X}} \sum_{\substack{S \in \mathscr{S}^*:S_i \subset X\\T \in \mathscr{T}:T' \subset Y}}|S_{i,Y}||T_X'| \ge \alpha \sum_{\substack{X \in \mathscr{P}_i\\Y \in \mathscr{Q}_X}} \sum_{\substack{S \in \mathscr{S}^*:S_i \subset X\\T \in \mathscr{T}:T' \subset Y}}|S_{i,Y}||T'|.\end{equation}

Applying Lemma \ref{sumalot} with $j=i+1$ and $W'$ the subset of $W$ of blue vertices in step $i$, we obtain

\begin{align}
\nonumber \sum_{\substack{X \in\mathscr{P}_i\\Y \in \mathscr{Q}_X}} \sum_{\substack{S \in \mathscr{S}^*:S_i \subset X\\T \in \mathscr{T}:T' \subset Y}}|S_{i,Y}||T'| =& \sum_{S \in \mathscr{S}^*}\sum_{Y \in \mathscr{Q}_{X(S)}} |S_{i,Y}||Y'| =\sum_{S \in \mathscr{S}^0} \sum_{Y \in \mathscr{Q}_{X(S)}} |S_{i,Y}||Y'|+\sum_{S \in \mathscr{S}^1} \sum_{Y \in \mathscr{Q}_{X(S)}} |S_{i,Y}||Y'| \\
\nonumber \ge& \sum_{S \in \mathscr{S}^0} \left(C-.8\right)|\widetilde{S}_{i+1}||W|+\sum_{S \in \mathscr{S}^1} \frac{1}{2}\left(|S_i|-\frac{|S|}{2}\right)\left(C-.8\right)|W|\\
=&\left(\sum_{S \in \mathscr{S}^0}|\widetilde{S}_{i+1}|+\sum_{S \in \mathscr{S}^1}\frac{|S_i|}{2}- \sum_{S \in \mathscr{S}^1}\frac{|S|}{4}\right)\left(C-.8\right)|W|. \label{eqlonglong}
\end{align}

Note that $\sum_{S \in \mathscr{S}^0} |\widetilde{S}_{i+1}|+\sum_{S \in \mathscr{S}^1} |S_i|$ is exactly the number of \emph{new} red vertices in step $i+1$. Indeed, any $S$ that has blue vertices still in step $i$ is in one of $\mathscr{S}^0$ or $\mathscr{S}^1$. If $S$ is in $\mathscr{S}^0$, then it contains blue vertices in step $i+1$, and so the new red vertices from $S$ are $S_i \setminus S_{i+1}=\widetilde{S}_{i+1}$. If $S$ is in $\mathscr{S}^1$, then all of its vertices are red in step $i+1$. Now, each vertex that belongs to an $S \in \mathscr{S}^1$ is red in step $i+1$, thus the number of such vertices is at most $(1-\beta')|V|$. That is, $\sum_{S \in \mathscr{S}^1} |S| \le (1-\beta')|V|$. This gives 
\begin{equation}\sum_{S \in \mathscr{S}^0}|\widetilde{S}_{i+1}|+\sum_{S \in \mathscr{S}^1}\frac{|S_i|}{2}- \sum_{S \in \mathscr{S}^1}\frac{|S|}{4} \ge \frac{1}{2}\left( \sum_{S \in \mathscr{S}^0}|\widetilde{S}_{i+1}|+\sum_{S \in \mathscr{S}^1}|S_i|- \sum_{S \in \mathscr{S}^1}\frac{|S|}{2} \right) \ge \frac{1}{2}\left(\beta-\beta'-\frac{1-\beta'}{2} \right).\label{longlineeq} \end{equation}

Combining (\ref{eqlonglong}) and (\ref{longlineeq}), and simplifying, we obtain 
 
\begin{equation*}\sum_{\substack{X \in \mathscr{P}_i\\Y \in \mathscr{Q}_X}} \sum_{\substack{S \in \mathscr{S}^*:S_i \subset X\\T \in \mathscr{T}:T' \subset Y}}|S_{i,Y}||T'| \ge \frac{1}{2}\left(\beta-\beta'-\frac{1-\beta'}{2} \right)(C-.8)|V||W| =\frac{1}{2}\left(\beta-\frac{1+\beta'}{2}\right)\left(C-.8\right)|V||W|.\end{equation*}

Also using (\ref{eqlong:lm}) and (\ref{eq562}), we obtain 
\begin{equation*}\sum_{\substack{X \in \mathscr{P}_i\\Y \in \mathscr{Q}_X}} \sum_{\substack{S \in \mathscr{S}^*:S_i \subset X\\T \in \mathscr{T}:T' \subset Y}}\left|e_{\widetilde{G}_{i+1}}(S_{i,Y},T'_X)-\frac{\left|S_{i,Y}\right|}{\left|S_{i,Y}^{*}\right|} e_{\widetilde{G}_{i+1}}(S_{i,Y}^{*},T'_X)\right| \ge \frac{1}{2}\left(\beta-\frac{1+\beta'}{2}\right)\left(C-.8\right)\alpha|V||W|.\end{equation*}

As in the proof of the previous lemma, we want to change $\widetilde{G}_{i+1}$ to $G$ in the above estimate, which has a small  effect on the lower bound. As the argument is the same as in the proof of Lemma \ref{addingup}, we only summarize it. Again, for each term, the subsets of $S$ and of $T$ that we are considering are completely within $X_Y^a$ and $Y_X^b$, respectively, for an active pair $X,Y$, and some $a,b \in \{0,1\}$. Fix $X,Y,a,b$, and look at the terms corresponding to $X_Y^a$ and $Y_X^b$. As before, all these terms will have the same sign, and the exact same argument shows, using Lemma \ref{regular}, that changing these terms decreases the sum by at most $2\cdot .9 \cdot  \alpha x_{i+1}^{-1/4}|X_Y^a||Y_X^b|$. For simplicity we replace the $.9$ with $1$, and since this is true for all $X,Y,a,b$, adding it up, we have

\begin{multline*}
\sum_{\substack{X \in \mathscr{P}_i\\Y \in \mathscr{Q}_X}} \sum_{\substack{S \in \mathscr{S}^*:S_i \subset X\\T \in \mathscr{T}:T' \subset Y}}\left|e_{G}(S_{i,Y},T'_X)-\frac{\left|S_{i,Y}\right|}{\left|S_{i,Y}^{*}\right|} e_{G}(S_{i,Y}^{*},T'_X)\right|\\
\ge \frac{1}{2}\left(\beta-\frac{1+\beta'}{2}\right)\left(C-.8\right)\alpha|V||W|-2x_{i+1}^{-1/4}\alpha|V||W|.
\end{multline*}

Let $\mathscr{R}\subset \mathscr{S} \times \mathscr{T}$ be the set of pairs $S,T$ for which the corresponding pair $X(S),Y(T)$ is active. These are precisely the pairs $S,T$ that appear in the previous sum. For any such $(S,T) \in \mathscr{R}$, let $X,Y$ be the corresponding active pair, and take $T_S^0=T_S^1=T_X'$, $S_T^0=S_{i,Y}$, $S_T^1=S_{i,Y}^*$. By definition, $|S_T^0| \le |S_T^1|$, and we can apply Corollary \ref{irreghelp} to obtain
\[\irreg(\mathscr{S},\mathscr{T}) \geq \frac{1}{4}\left(\beta-\frac{1+\beta'}{2}\right)\left(C-.8\right)\alpha|V||W|-x_{i+1}^{-1/4}\alpha|V||W|.\]
This completes the proof.
\end{proof}

\begin{proof}[Proof of Lemma \ref{withassumption}]
Now, we show that, assuming $\mathscr{S}$ refines $\mathscr{P}_1$, and $\mathscr{T}$ refines $\mathscr{Q}_1$, they must be close to 
being refinements of $\mathscr{P}_s$ and $\mathscr{Q}_s$, and cannot have too few parts. Let $v_i$ and $w_i$ be the proportion of vertices that are blue in step $i$ in $V$ and $W$, respectively. 
Then, since vertices can turn red but can never turn blue, this gives us two non-increasing sequences. We want to show that if the 
irregularity is less than $\frac{1}{5000}\alpha|V||W|$, then both sequences stay above $.97$, so in particular $v_s$ and $w_s$ are at least $.97$.
If there is a step where $w_i \ge .9$, but $v_i \le .97$, then using Lemma \ref{addingup}, and recalling that $x_2=2^{2^6}$, the irregularity is at least \[\frac{.03}{12}(.9-.8)\alpha|V||W|-\frac{1}{2}x_2^{-1/4}\alpha|V||W|=\frac{1}{4000}\alpha|V||W|-\frac{1}{2}x_2^{-1/4}\alpha|V||W| \ge \frac{1}{5000}\alpha|V||W|.\] By the same logic, this also holds if $v_i \ge .9$ and $w_i \le .97$. Thus, the only way it is possible that one of $v_s,w_s$ is below $.97$ is if there is an $i$ such that $v_i$ and $w_i$ are both at least $.97$, but $v_{i+1}$ and $w_{i+1}$ are both at most $.9$. Then, using just the fact that $w_i \ge .9$, $v_i \ge .97$, $v_{i+1} \le .9$, we can apply Lemma \ref{onestep} to obtain that the irregularity is at least \[\frac{1}{4}\left(.97-\frac{.9+1}{2}\right)(.9-.8)\alpha|V||W|-.9x_{i+1}^{-1/4}\alpha|V||W|=\frac{1}{2000}\alpha|V||W|-.9x_{i+1}^{-1/4}\alpha|V||W| \ge \frac{1}{5000} \alpha |V||W|.\] 

Thus, if the irregularity is less than $\frac{1}{5000}\alpha|V||W|$, then both $v_s$ and $w_s$ are at least $.97$, and so $\mathscr{S}$ and $\mathscr{T}$ are in fact close to being refinements of $\mathscr{P}_s$ and $\mathscr{Q}_s$ respectively. To each part $S \in \mathscr{S}$ that has blue vertices, there is a unique part  $X \in \mathscr{P}_s$ with $S_s \subset X$. In other words, if two blue vertices belong to different parts of $\mathscr{P}_s$, then they belong to different parts of $\mathscr{S}$. Since at least a $.97$ proportion of the vertices are blue, at least $.97|\mathscr{P}_s|$ parts of $\mathscr{P}_s$ have blue vertices (since each part in $\mathscr{P}_s$ has the same size), and so we have at least $.97|\mathscr{P}_s|$ parts in $\mathscr{S}$. Applying the analogous argument, we have at least $.97|\mathscr{Q}_s|$ parts in $\mathscr{T}$. 

\end{proof}

Now, in order to finish the proof of the main result, we need the following lemma.

\begin{lemma} \label{divide}
Let $G$ be a graph with a vertex partition $\mathscr{P}$, and $\mathscr{Q}$ is a refinement of $\mathscr{P}$ such that each part of $\mathscr{P}$ is divided into at most $k$ parts. If $\mathscr{Q}$ has irregularity $I$, then $\mathscr{P}$ has irregularity at least $\frac{I}{2k^2}$.
\end{lemma}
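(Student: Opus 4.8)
The plan is to bound $\irreg_G(\mathscr{P})$ from below by looking at how each pair $(P,P')$ of parts of $\mathscr{P}$ contributes to $\irreg_G(\mathscr{Q})$, and show that the refinement of a single pair of parts can only increase irregularity by a bounded multiplicative factor. Concretely, write $\irreg_G(\mathscr{Q}) = \sum_{Q,Q' \in \mathscr{Q}} \irreg_G(Q,Q')$ and regroup this sum according to which parts $P,P' \in \mathscr{P}$ contain $Q$ and $Q'$ respectively. For a fixed pair $P,P'$, the terms are $\sum_{Q \subset P,\, Q' \subset P'} \irreg_G(Q,Q')$, where the sums run over the at most $k$ parts of $\mathscr{Q}$ inside $P$ and the at most $k$ parts inside $P'$. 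The goal is to show this inner sum is at most $2k^2\,\irreg_G(P,P')$; summing over all pairs $P,P'$ then yields $I \le 2k^2\,\irreg_G(\mathscr{P})$, which is the claim.

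The key estimate is thus: for any part $P$ partitioned into $Q_1,\dots,Q_a$ with $a \le k$ and any part $P'$ partitioned into $Q'_1,\dots,Q'_b$ with $b \le k$, we have $\sum_{i,j} \irreg_G(Q_i, Q'_j) \le 2k^2 \irreg_G(P,P')$. Since there are at most $k^2$ summands, it suffices to prove the pointwise bound $\irreg_G(Q_i,Q'_j) \le 2\,\irreg_G(P,P')$ for each $i,j$. To see this, take the optimal subsets $U \subset Q_i$, $W \subset Q'_j$ witnessing $\irreg_G(Q_i,Q'_j)$, so that $\irreg_G(Q_i,Q'_j) = |e_G(U,W) - |U||W|\,d(Q_i,Q'_j)|$. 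The issue is that the reference density $d(Q_i,Q'_j)$ is not $d(P,P')$, so I would compare $e_G(U,W)$ to $\frac{|U||W|}{|P||P'|}e_G(P,P')$ instead. Apply the triangle-inequality splitting device already used in Lemma \ref{irreghelpsinglepair}: either $|e_G(U,W) - \frac{|U||W|}{|P||P'|}e_G(P,P')|$ is at least half of $\irreg_G(Q_i,Q'_j)$ — in which case $U \subset P$, $W \subset P'$ already give $\irreg_G(P,P') \ge \tfrac12 \irreg_G(Q_i,Q'_j)$ — or else the "density mismatch" term $\frac{|U||W|}{|Q_i||Q'_j|}\bigl|e_G(Q_i,Q'_j) - \frac{|Q_i||Q'_j|}{|P||P'|}e_G(P,P')\bigr|$ is at least half of it, and since $|U||W| \le |Q_i||Q'_j|$, the sets $Q_i \subset P$, $Q'_j \subset P'$ witness $\irreg_G(P,P') \ge \tfrac12\irreg_G(Q_i,Q'_j)$. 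In both cases $\irreg_G(Q_i,Q'_j) \le 2\,\irreg_G(P,P')$, as wanted.

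I would then just assemble: $I = \irreg_G(\mathscr{Q}) = \sum_{P,P' \in \mathscr{P}} \sum_{Q \subset P,\, Q' \subset P'} \irreg_G(Q,Q') \le \sum_{P,P' \in \mathscr{P}} k^2 \cdot 2\,\irreg_G(P,P') = 2k^2 \irreg_G(\mathscr{P})$, so $\irreg_G(\mathscr{P}) \ge I/(2k^2)$.

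The main obstacle — really the only subtlety — is that $\irreg_G(Q_i,Q'_j)$ is measured against the local density $d(Q_i,Q'_j)$ rather than $d(P,P')$, so a naive "restriction of witnesses" argument does not immediately work; the resolution is exactly the two-way triangle-inequality split of Lemma \ref{irreghelpsinglepair}, which costs only a harmless factor of $2$. Everything else (the regrouping of the sum, bounding the number of summands by $k^2$) is routine bookkeeping. One should also note the degenerate cases where $|Q_i|=0$ or $|Q'_j|=0$, which contribute nothing and can be discarded, and the case $\irreg_G(P,P')=0$, where uniformity of $G$ on $P \times P'$ forces $\irreg_G(Q_i,Q'_j)=0$ as well.
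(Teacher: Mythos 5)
Your proposal is correct and follows essentially the same route as the paper: regroup $\irreg_G(\mathscr{Q})$ by pairs of parts of $\mathscr{P}$, use the triangle-inequality device of Lemma \ref{irreghelpsinglepair} (with the optimal witnesses $U\subset Q_i$, $W\subset Q'_j$ playing the role of $S'',T''$) to get $\irreg_G(Q_i,Q'_j)\le 2\,\irreg_G(P,P')$, and account for the at most $k^2$ sub-pairs. The only cosmetic difference is that the paper singles out the largest of the $k^2$ terms while you bound every term; the two are equivalent.
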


\begin{proof}
Let $X$ and $Y$ be two parts of $\mathscr{P}$. The idea is that since we divide each part into at most $k$ parts, we can find a pair of parts of $\mathscr{Q}$ in $X$ and $Y$ that have large irregularity, and use this to show that $X$ and $Y$ have some irregularity. To be precise, let $\mathscr{X}$ be the parts of $\mathscr{P}'$ that are subsets of $X$, and $\mathscr{Y}$ be the parts of $\mathscr{P}'$ that are subsets of $Y$. Define
\[I_{XY}=\sum_{\substack{X' \in \mathscr{X}\\ Y' \in \mathscr{Y}}} \irreg(X',Y').\]

Now, since each part of $\mathscr{P}$ is divided into at most $k$ parts, one of the terms of this sum is at least $I_{XY}/k^2$. Let $X'$ and $Y'$ be a pair of elements of $\mathscr{X}$ and $\mathscr{Y}$ for which this holds. Now, this means that there are subsets $X''\subset X',Y'' \subset Y'$ such that

\[ \left |e(X'',Y'')-\frac{|X''||Y''|}{|X'||Y'|}e(X',Y')\right |\ge \frac{I_{XY}}{k^2}. \]

Now, we can apply Lemma \ref{irreghelpsinglepair} to show that \[\irreg(X,Y) \ge \frac{I_{XY}}{2k^2}.\] 

Adding this up for all $X$ and $Y$, we obtain

\begin{align*} \irreg(\mathscr{P}) = &  \sum_{X,Y \in \mathscr{P}} \irreg(X,Y) \ge \sum_{X,Y \in \mathscr{P}} \frac{I_{XY}}{2k^2} = \frac{1}{2k^2} \sum_{X,Y \in \mathscr{P}} \sum_{\substack{X' \in \mathscr{X}\\Y' \in \mathscr{Y}}} \irreg(X',Y') \\ 
= & \frac{1}{2k^2} \sum_{X',Y' \in \mathscr{P}'} \irreg(X',Y')=\frac{1}{2k^2} \irreg(\mathscr{P}') .\end{align*}

\end{proof}

\begin{proof}[Proof of Theorems 3.5 and 1.2]

Consider a partition $\mathscr{P}$ with less than $\frac{.97}{4x_1}(|\mathscr{P}_s|+|\mathscr{Q}_s|)$ parts. Let $\mathscr{P}'$ be the common refinement of $\mathscr{P}$ and $\mathscr{P}_1 \cup \mathscr{Q}_1$. Then $\mathscr{P}'$ divides each part of $\mathscr{P}$ into at most $4x_1$ parts. It follows that $\mathscr{P}'$ has less than $4x_1(\frac{.97}{4x_1})(|\mathscr{P}_s|+|\mathscr{Q}_s|)=.97(|\mathscr{P}_s+|\mathscr{Q}_s|)$ parts. By Lemma \ref{withassumption}, $\mathscr{P}'$ has irregularity at least $\frac{1}{5000}\alpha|V||W|$. By Lemma $\ref{divide}$, $\mathscr{P}$ has irregularity at least $\frac{1}{5000}\frac{1}{32x_1^2}\alpha|V||W|= \frac{1}{2^{24} \cdot 10000}\alpha|V||W| > \epsilon (|V|+|W|)^2$ (recall that $x_1=2^{10}$ and $\alpha > 2^{26}10^4 \epsilon$). This is a contradiction, and so we have completed the proof of Theorem 
\ref{maingen}. 

Now, to finish the proof of Theorem 1.2, we first prove by induction that $x_k \ge 2^8T(k+1)$ holds for each positive integer $k$. Indeed, this is true for the base case $k=1$, since $x_1 = 2^{10} = 2^8T(2)$. Since $x_k \ge x_1 = 2^{10}$, we have $x_k/16 \ge 2^{-8}x_k+8.$ So, by the induction hypothesis, we have \[x_{k+1}=2^{x_k/16} \ge 2^{2^{-8} x_k+8} \ge 2^{T(k+1)+8}=2^8 T(k+2), \] proving the claimed bound. 

By Theorem \ref{maingen}, the number of parts any vertex partition with irregularity at most $\epsilon |V(G)|^2$  is at least $\frac{.97}{4x_1}(|\mathscr{P}_s|+|\mathscr{Q}_s|) > \frac{1}{4x_1}|\mathscr{P}_s|=2^{-12}|\mathscr{P}_s| \geq 2^{-12}x_s \geq 2^{-12}2^{8}T(s+1) \geq T(s)$, where the last inequality holds for $s \geq 3$. As $s \ge 10^{-26}\epsilon^2$, this proves Theorem \ref{main}.
\end{proof}

\section{Concluding Remarks}

In this paper, we determined that the number $M(\epsilon)$ of parts in Szemer\'edi's regularity lemma, Theorem \ref{regver}, grows as a tower of twos of height $\Theta(\epsilon^{-2})$. The upper bound we proved is a tower of height $2+\epsilon^{-2}/16$, and the lower bound is a tower of twos of height $10^{-26}\epsilon^{-2}$. We next sketch how to improve the constant factors in both the upper and lower bounds. 

We first discuss the upper bound, whose proof is given in Section \ref{upper}. Recall that the range of possible values for the mean square density is $[d^2,d]$, where $d$ is the edge density of the graph. To maximize the length of this interval, we should have $d=1/2$. One can easily check that the upper bound argument is tight  only if for every pair of parts $X,Y$ in a partition at a certain step, there are subsets $X' \subset X$ and $Y' \subset Y$ such that $|X'||Y'|=|X||Y|/2$ and $\left|e(X',Y')-d(X,Y)|X'|Y'|\right|=\epsilon |X||Y|$. It would then follow that $|d(X',Y')-d(X,Y)|=2\epsilon$. The distribution of densities between pairs of parts in the partition in step $i$ would then follow a binomial distribution with mean $1/2$ and $i$ steps of size $2\epsilon$. This is not feasible, as the densities are in the interval $[0,1]$, and after $\Omega(\epsilon^{-2})$ steps, a
constant fraction of the pairs go between pairs of parts whose density would be outside the interval $[0,1]$. 
Optimizing this argument to get a best possible upper bound would be more complicated. We also remark that this issue of densities having to be between $0$ and $1$ is the same as one of the main technical difficulties of the lower bound proof, the fact that there are inactive pairs. 

We next discuss the lower bound. Various parts of our lower bound proof do not attempt to optimize the constant factor in the tower height bound for $M(\epsilon)$. One particularly large loss (a factor $x_1^4$, where $x_1$ is the size of the first partition in the construction) comes from picking a large $x_1$ and assuming that the partition in question is a refinement of the partition in the first step. Instead, we could pick a first partition with $x_1$ parts, and pick the graph in the first step to have densities bounded away from $0$ and $1$ and have the property that any refinement with small irregularity is close to being a refinement of this given partition. We chose the current presentation without optimal constant factors for the sake of clarity. 

\noindent {\bf The original version of Szemer\'edi's regularity lemma}

The original version of the regularity lemma can be stated in a more general form as follows. We say a pair $X,Y$ of vertex subsets form an $(\epsilon,\delta)$-regular pair if there is $\alpha \in [0,1]$ such that for any subsets $U\subset X$, $W \subset Y$, such that $|U| \ge \delta |X|$, $|W| \ge \delta |Y|$, we have $d(U,W)$ lies between $\alpha$ and $\alpha+\epsilon$. Now, given $\epsilon, \delta, \eta>0$, call a partition into $k$ parts $(\epsilon, \delta, \eta)$-regular if all but at most $\eta k^2$ pairs of parts are $(\epsilon,\delta)$-regular. The lemma states that for any $\epsilon,\delta,\eta$, there is a $K=K(\epsilon,\delta,\eta)$ such that there is an equitable partition into $k \le K$ parts that is $(\epsilon,\delta,\eta)$-regular. Again, it is not too difficult to see that if the irregularity of an equitable partition is at most $\frac{1}{2}\epsilon \delta^2 \eta |V|^2$, then it is $(\epsilon,\delta,\eta)$-regular. Conversely,  if $\epsilon=\delta=\eta$, then if an equitable partition is $\epsilon$-regular, it has irregularity at most $2\epsilon|V|^2$.

Gowers \cite{Go97} showed with $c=1/16$ that there is a graph $G$ whose smallest equitable $(1-\delta^{c},\delta,1-\delta^c)$-regular partition is at least a tower of height $\delta^{-c}$. Addressing a question of Gowers, Conlon and Fox \cite{CF12} showed that there are absolute constants $\epsilon,\delta>0$ such that there is a graph $G$ whose smallest equitable $(\epsilon,\delta,\eta)$-regular partition is at least a tower of height $\Omega(\eta^{-1})$, which determines the right tower height as a function of $\eta$. 

In the case $\epsilon=\delta=\eta$, the upper bound proof gives a tower height of $O(\epsilon^{-5})$. 
Our result in Theorem \ref{main} directly implies a new lower bound on the tower height in this case of $\Omega(\epsilon^{-2})$. In fact, one can combine the construction ideas in Theorem \ref{main} and that in \cite{CF12}, and one should get a tower height of $\Omega(\epsilon^{-3})$. The addtional idea from \cite{CF12} is that instead of adding and subtracting densities for every active pair at each step, we only do this for a random fraction (on the order of $\eta$) of the active pairs at each step. A further variant using ideas similar to those of Gowers by incorporating $\delta$ into the construction should improve the exponent beyond $3$, although we don't see yet how to match the upper bound exponent of $5$.

\vspace{0.1cm}

\noindent {\bf Acknowledgement:} We would like to thank Noga Alon and David Conlon for helpful discussions.

\bibliography{ref}

\bibliographystyle{amsplain}








\end{document}